\definecolor{tocolor}{rgb}{.1,.1,.5}
\definecolor{urlcolor}{rgb}{.2,.2,.6}
\definecolor{linkcolor}{rgb}{.0,.3,.7}
\definecolor{citecolor}{rgb}{.7,.1,.3}
\newtheorem{theorem}{Theorem}[section]
\newtheorem{lemma}[theorem]{Lemma}
\newtheorem{proposition}[theorem]{Proposition}
\newtheorem{corollary}[theorem]{Corollary}
\theoremstyle{definition}
\newtheorem{definition}[theorem]{Definition}
\newtheorem{remark}[theorem]{Remark}
\numberwithin{equation}{section}
\begin{document}

\baselineskip=15pt

\title[Moduli spaces of framed $G$--Higgs bundles]{Moduli spaces of framed $G$--Higgs bundles and 
symplectic geometry}

\author[I. Biswas]{Indranil Biswas}

\address{School of Mathematics, Tata Institute of Fundamental
Research, Homi Bhabha Road, Mumbai 400005, India}

\email{indranil@math.tifr.res.in}

\author[M. Logares]{Marina Logares}

\address{School of Computing Electronics and Mathematics, University of Plymouth, 
Drake Circus, PL4 8AA, Plymouth, United Kingdom} 

\email{marina.logares@plymouth.ac.uk}
 
\author[A. Pe\'on-Nieto]{Ana Pe\'on-Nieto}

\address{Universit\'e de Gen\`eve, Section de Math\'ematiques,
2-4 Rue du Li\`evre, C.P. 64, 1211 Gen\`eve 4, Switzerland}

\email{ana.peon-nieto@unige.ch}

\subjclass[2010]{14D20, 53D30, 14D21}

\keywords{Framed $G$-Higgs bundle, deformations, stability, symplectic form, Poisson structure}

\begin{abstract}
Let $X$ be a compact connected Riemann surface, $D\, \subset\, X$ a reduced effective divisor, 
$G$ a connected complex reductive affine algebraic group and $H_x\, \subsetneq\, G$ a 
Zariski closed subgroup for every $x\, \in\, D$. A framed principal $G$--bundle on $X$ is a pair 
$(E_G,\, \phi)$, where $E_G$ is a holomorphic principal $G$--bundle on $X$ and $\phi$ assigns 
to each $x\, \in\, D$ a point of the quotient space $(E_G)_x/H_x$. A framed $G$--Higgs bundle 
is a framed principal $G$--bundle $(E_G,\, \phi)$ together with a
holomorphic section $\theta\, \in\, 
H^0(X,\, \text{ad}(E_G)\otimes K_X\otimes{\mathcal O}_X(D))$ such that $\theta(x)$ is 
compatible with the framing $\phi$ at $x$ for every $x\, \in\, D$. We construct a holomorphic 
symplectic structure on the moduli space $\mathcal{M}_{FH}(G)$ of stable framed $G$--Higgs 
bundles on $X$. Moreover, we prove that the natural morphism from $\mathcal{M}_{FH}(G)$ to the
moduli space $\mathcal{M}_{H}(G)$ of $D$-twisted $G$--Higgs bundles $(E_G,\, \theta)$ that forgets 
the framing, is Poisson. These results generalize \cite{BLP} where $(G,\, \{H_x\}_{x\in D})$ 
is taken to be $(\text{GL}(r,{\mathbb C}),\, \{\text{I}_{r\times r}\}_{x\in D})$. We also 
investigate the Hitchin system for the moduli space $\mathcal{M}_{FH}(G)$ and its
relationship with that for $\mathcal{M}_{H}(G)$.
\end{abstract}

\maketitle

\tableofcontents

\section{Introduction}

Higgs bundles on Riemann surfaces were introduced by Hitchin in \cite{Hi0} and the Higgs 
bundles on higher dimensional complex manifolds were introduced by Simpson in \cite{Si0}. The 
moduli spaces of Higgs bundles on Riemann surfaces have been extensively studied because of their 
rich symplectic geometric, differential geometric as well as algebraic geometric structures;
they also play an important role in geometric representation theory \cite{Ngo}.
In particular, in his foundational papers \cite{Hi0,H}, Hitchin showed that such a moduli 
space is a holomorphically symplectic manifold which contains the total space of the 
cotangent bundle of a moduli space of vector bundles as a Zariski dense open subset such that 
the restriction of the symplectic form to this Zariski open subset coincides with the 
standard Liouville symplectic form on the total space
of the cotangent bundle. Moreover, he constructed a 
fibration of the moduli space of Higgs bundles over an affine space which he went on to prove 
to be an algebraically completely integrable system; this completely integrable system 
nowadays is known as the Hitchin system.

Over time, moduli spaces of Higgs bundles have undergone diverse generalizations. Here we 
will consider $D$--twisted $G$--Higgs bundles to which we shall add an extra structure which 
is called a framing. Similar objects were considered earlier in \cite{Si1}, \cite{Si2}, 
\cite{Ma} and \cite{Nitin}.

Take a compact connected Riemann surface $X$, and fix a reduced effective divisor $D$ on it. 
Let $G$ be a connected reductive affine algebraic group defined over $\mathbb C$. A 
$D$--twisted $G$--Higgs bundle $(E_{G},\,\theta)$ on $X$ consists of a holomorphic principal 
$G$--bundle $E_{G}\,\longrightarrow\, X$ together with a $D$-twisted Higgs field 
$\theta\,\in\, H^{0}(X, {\rm ad}(E_{G})\otimes K_{X}\otimes{\mathcal O}_X(D))$, where ${\rm 
ad}(E_{G})$ is the adjoint vector bundle for principal $G$--bundle $E_{G}$ while $K_{X}$ 
denotes the holomorphic cotangent bundle of $X$.

The isomorphism classes of all topological principal $G$--bundles on $X$ are parametrized by 
the fundamental group $\pi_{1}(G)$. Once we fix a topological isomorphism class 
$\nu\,\in\,\pi_{1}(G)$, the moduli space of stable $D$--twisted $G$--Higgs bundles is a 
smooth connected orbifold \cite{Nitin,H2}; such a moduli space will be denoted by 
$\mathcal{M}_{H}(G)$.

It is known that $\mathcal{M}_{H}(G)$ is equipped with a natural holomorphic Poisson
structure \cite{Bot,Ma,BR}. It should be mentioned that this Poisson
structure is never symplectic unless $D$ is actually the zero divisor.

Fix a nondegenerate symmetric $G$--invariant bilinear form $\sigma$ on
the Lie algebra ${\mathfrak g}\,:=\, 
\text{Lie}(G)$. For each point $x\, \in\, D$, fix a Zariski closed subgroup $H_x\, \subset\, 
G$. A framing on a holomorphic principal $G$--bundle $E_G$ on $X$ is a map $\phi\, :\, D\, 
\longrightarrow\,\bigcup_{x\in D} (E_G)_x/H_x$ such that $\phi(x)\, \in\, (E_G)_x/H_x$
for every $x\, \in\, D$. Using the bilinear form $\sigma$ in (\ref{e5}) and the framing
$\phi$ on $E_G$, we construct a subspace ${\mathcal H}^\perp_x\, \subset\, \text{ad}(E_G)_x$ 
for each $x\, \in\, D$; see Section \ref{se2.1} for the construction. Let
$$
\text{ad}^n_\phi(E_G)\, \subset\, \text{ad}(E_G)
$$
be the subsheaf uniquely identified by the condition that a locally defined holomorphic section $s$ of
$\text{ad}(E_G)$ lies in $\text{ad}^n_\phi(E_G)$ if and only if $s(x)\, \in\,
{\mathcal H}^\perp_x\, \subset\, \text{ad}(E_G)_x$ for every $x\,\in\, D$ that lies in the
domain of the locally defined section $s$.

A Higgs field on a framed principal $G$--bundle $(E_G,\, \phi)$ is a holomorphic section 
$\theta$ of the holomorphic vector bundle $\text{ad}^n_\phi(E_G)\otimes K_X\otimes {\mathcal 
O}_X(D)$, where $\text{ad}^n_\phi(E_G)$ is described above. Such a triple $(E_G,\, \phi,\, 
\theta)$ will be called a framed $G$--Higgs bundle. In particular, the pair $(E_G,\, \theta)$ 
is a $D$--twisted $G$--Higgs bundle. If $H_x$ is the trivial subgroup $e\,\in\, G$ for all 
$x\,\in\, D$, then ${\mathcal H}^\perp_x\,=\, \text{ad}(E_G)_x$ for all $x$. Hence in that 
case a Higgs field on $(E_G,\, \phi)$ is simply an element of $H^{0}(X, \,{\rm 
ad}(E_{G})\otimes K_{X}\otimes{\mathcal O}_X(D))$ (a $D$--twisted $G$--Higgs field on $E_G$).

We prove the following:
\begin{enumerate}
\item \textit{A moduli space of framed $G$--Higgs bundles has a natural holomorphic symplectic
structure.} (See Theorem \ref{thm1}.)

\item \textit{The forgetful map from a moduli space of framed $G$--Higgs bundles to a
moduli space of $D$--twisted $G$--Higgs bundles, defined by $(E_G,\, \phi,\, \theta)\,
\longmapsto\, (E_G,\, \theta)$, is Poisson.} (See Theorem \ref{thm2}.)
\end{enumerate}

In particular, the holomorphic Poisson manifold given by a moduli space of $D$--twisted
$G$--Higgs bundles $\mathcal{M}_{H}(G)$ can be enhanced to a symplectic manifold by augmenting
the $D$--twisted $G$--Higgs bundle with a framing for the trivial sub group
$e\,\in\, G$ for all points of $D$.

The Hitchin system
\begin{equation}\label{eq:hmf_intro}
h\,:\,\mathcal{M}_H(G)\,\longrightarrow\, \mathcal{B}
\end{equation}
is defined by evaluating the Chevalley morphism 
$\chi\,:\,\mathfrak{g}\,\longrightarrow\, \mathfrak{g}\,\sslash G$
on the Higgs field. This
is again, despite $\mathcal{M}_H(G)$ being only Poisson and not symplectic unless $D$ is the zero divisor, an algebraically completely integrable system
(\cite[Remark 8.6]{Ma}, \cite[Section 5]{DMa}).

Hitchin systems constitute a very large family of algebraically completely integrable 
systems. Moreover, it is known that for suitable choices of the Riemann surface $X$, the 
group $G$, and the twisting, many classical integrable systems are embedded in them as 
symplectic leaves (see \cite[Section 9]{Ma}). In \cite{BLP} we showed that the Hitchin systems 
provided by the moduli spaces of framed $G$--Higgs bundles, when $G\,=\,\text{GL}(r,\mathbb{C})$ 
and $H_{x}\,=\,{\rm I}_{r\times r}$ for all $x\in D$, are no longer algebraically completely 
integrable systems. Firstly, the number of Poisson commuting functions given by the Hitchin map 
falls short of the dimension of the moduli space of framed principal $G$--bundles
(which is half the dimension of the moduli spaces of framed $G$--Higgs bundles).
Secondly, its
fibers are not abelian varieties, but torsors over the fibers of the
non-framed Hitchin system. We also investigate
two subsystems which come with the correct number of Poisson commuting functions. We also
show that these results generalize for any connected complex reductive affine algebraic group
$G$.

Let $\mathcal{M}_{FH}(G)$ denote the moduli space of stable framed $G$--Higgs bundles with a fixed topological class
$\nu$, and let
\begin{equation}\label{eq:hfG_intro}
h_{FH}\,:\,\mathcal{M}_{FH}(G)\,\longrightarrow \,\mathcal{B}
\end{equation}
be the corresponding Hitchin system.

We prove the following:

\begin{enumerate}[resume]
\item {\it The Hitchin map $h_{FH}$ in \eqref{eq:hfG_intro} produces a set of $N\,:=\,
\dim\,\mathcal{B}$ Poisson commuting functions on 
$\mathcal{M}_{FH}(G)$, i.e., $h_{FH}\,=\,(h_{1},\,\cdots,\, h_{N})$ with $\{h_{i},\,h_{j}\}_{P}
\,=\,0$ for all $i,\,j\,=\,1,\, \cdots ,\,N$.} (See Corollary 
\ref{cor:HF-Poissonc}.)

\item {\it The generic fibers of the map $h_{FH}$ are torsors over the abelian varieties $J_{b}=h^{-1}(b)$ where 
$h\,:\,\mathcal{M}_{H}(G)\,\longrightarrow\, \mathcal B$} is the Hitchin map in \eqref{eq:hmf_intro}. (See Corollary \ref{cor:torsor-fibres}.)

\item {\it There is a moduli space $\mathcal{M}_{FH}^\Delta(G)$ which is a subsystem of $\mathcal{M}_{FH}(G)$
and it is maximally abelianizable.} (See Corollary \ref{cor:subsystem} and Remark \ref{rk:max_abelianizable_system}.)
\end{enumerate}

The above results 
specialize to the results in \cite{BLP} when $G\,=\,\text{GL}(r,\mathbb{C})$ and
$H_{x}\,=\,{\rm I}_{r\times r}$ for every $x\,\in\, D$.

In Section \ref{sec:framedG} we introduce $D$-twisted $G$--Higgs bundles as well as framed 
structures for principal bundles and their juxtaposition, namely framed $G$--Higgs bundles. In
Section \ref{se3} we study 
the infinitesimal deformations of the $D$-twisted $G$--Higgs bundles and framed 
$G$--Higgs bundles. In Section \ref{sec:SP}, we construct a symplectic structure on the 
moduli space $\mathcal{M}_{FH}(G)$ of stable framed $G$--Higgs bundles, as well as a Poisson 
structure on the moduli space $\mathcal{M}_{H}(G)$ of stable $D$-twisted $G$--Higgs bundles.

In Section \ref{sec:cameral} we investigate the integrability properties of the Hitchin 
system in \eqref{eq:hmf_intro}. For the sake of clarity, we focus on the case $H_x\,=\,e$ for 
all $x$, nevertheless discussing the general case in Remark 
\ref{rk:max_abelianizable_system}.

We also describe a subsystem of \eqref{eq:hfG_intro} which is maximally abelianizable. This 
is done using the cameral cover approach of Donagi--Gaitsgory \cite{DG} (see also 
\cite{Ngo}), which identifies the generic fiber of the Hitchin map in \eqref{eq:hmf_intro} 
with a subvariety of the Jacobian of the cameral cover. We find that the generic fibers 
(corresponding to smooth cameral covers unramified over $D$) are $G^n/Z(G)$--torsors over the 
fibers of the map in \eqref{eq:hmf_intro}, where $n\,=\, \#D$ and $Z(G)$ is the center of 
$G$. It turns out that we may naturally identify $T^n/Z(G)$--sub-torsors therein (where 
$T\subset G$ is a maximal torus) with moduli spaces of framed Higgs bundles. More precisely, 
they correspond to the fibers of the restriction of the Hitchin map to the locus of 
relatively framed Higgs bundles defined in \eqref{eq:diag_FH}. This parametrizes Higgs 
bundles together with a framing of both the bundle and the Higgs field (see Proposition 
\ref{prop_relative Picard}, Theorem \ref{thm:fibers} and Remark \ref{rk:relatively_framed}).

\section{Framed $G$--Higgs bundles and stability}\label{sec:framedG}

\subsection{Framings and $G$--Higgs bundles}\label{se2.1}

Let $X$ be a compact connected Riemann surface. Denote by $K_{X}$ the holomorphic cotangent
bundle of $X$. Let
 \begin{equation}\label{e1}
D\,=\,\{x_{1},\,\cdots,\, x_{n}\} \,\subset\, X 
\end{equation}
be a reduced effective divisor on $X$ consisting of $n\, \geq\, 1$ distinct
points.

To clarify, we shall always assume that $D\, \not=\, \emptyset$.

The holomorphic line bundle $K_X\otimes {\mathcal O}_X(D)$ on $X$ will be denoted by 
$K_X(D)$. For any $x\, \in\, D$, the fiber $K_X(D)_x$ of $K_X(D)$ over $x$ is identified with 
$\mathbb C$. Indeed, for any holomorphic coordinate function $z$ on $X$ defined around the 
point $x$ such that $z(x)\,=\, 0$, consider the homomorphism
\begin{equation}\label{res}
{\mathbb C}\, \longrightarrow\, K_X(D)_x\, ,
\ \ c\, \longmapsto\, c\cdot \frac{dz}{z}\Big\vert_{z=x}\, .
\end{equation}
The homomorphism in \eqref{res} is in fact independent of the choice of the above holomorphic
coordinate function $z$, and thus $K_X(D)_x$ is canonically identified with $\mathbb C$.

Let $G$ be a connected complex Lie group. Let
\begin{equation}\label{e2}
p\, :\, E_G\, \longrightarrow\, X
\end{equation}
be a holomorphic principal $G$--bundle over $X$; we recall that this means that $E_G$ is a
holomorphic fiber bundle over $X$ equipped with a holomorphic right-action of the group $G$
$$
q'\, :\, E_G\times G\,\longrightarrow\, E_G
$$
such that
\begin{equation}\label{qp}
p(q'(z,\, g))\,=\, p(z)
\end{equation}
for all $(z,\, g)\, \in\, E_G\times G$, where
$p$ is the projection in \eqref{e2} and, furthermore, the resulting
map to the fiber product
$$
E_G\times G \, \longrightarrow\, E_G\times_X E_G\, , \ \ (z,\, g) \, \longrightarrow\, (z,\,
q'(z,\, g))
$$
is a biholomorphism. For notational convenience, the point
$q'(z,\, g)\, \in\, E_G$, where $(z,\, g)\,\in\, E_G\times G$, will be denoted by $zg$. For any
$x\, \in\, X$, the fiber $p^{-1}(x)\, \subset\, E_G$ will be denoted by $(E_G)_x$.

For each point $x\, \in\, D$, fix a complex Lie proper subgroup
\begin{equation}\label{hx}
H_x\, \subsetneq\, G\, .
\end{equation}

A \textit{framing} of $E_G$ over the divisor $D$ in \eqref{e1} is a map
$$
\phi\, :\, D\, \longrightarrow\, \bigcup_{x\in D} (E_G)_x/H_x\, ,
$$
where $H_x$ is the subgroup in \eqref{hx},
such that $\phi(x) \, \in\, (E_G)_x/H_x$ for every $x\, \in\, D$. So
the space of all framings of $E_G$ over $D$ is the Cartesian product
\begin{equation}\label{e3}
{\mathcal F}(E_G)\, :=\, \prod_{x\in D} (E_G)_x/H_x\, .
\end{equation}
Let
\begin{equation}\label{e4}
\widehat{p}_x\, :\, {\mathcal F}(E_G)\, \longrightarrow\, (E_G)_x/H_x
\end{equation}
be the natural projection.

A \textit{framed} principal $G$--bundle on $X$ is a holomorphic
principal $G$--bundle $E_G$ on $X$ equipped with a framing over $D$.

The first remark below is due to the referee.

\begin{remark}\label{remP}
Take $G$ to be a reductive algebraic group.
A parabolic subgroup of $G$ is a Zariski closed connected subgroup $P\, \subset\, G$ such that
the quotient variety $G/P$ is projective.
Set each $H_x$ to be some parabolic subgroup 
of $G$. Then a framed principal $G$--bundle is a quasiparabolic $G$--bundle with parabolic
divisor $D$ and quasiparabolic type $H_x$ for $x\, \in\, D$. In particular, when
$G\,=\, \text{GL}(r,{\mathbb C})$ and $H_x\,\subset\, \text{GL}(r,{\mathbb C})$ is a parabolic
subgroup for every $x\, \in\, D$, a framed principal $G$--bundle corresponds to
a holomorphic vector bundle $E$ on $X$ of rank $r$ equipped with a strictly
decreasing filtration, by linear subspaces, of the fiber $E_x$ for all $x\, \in\, D$. The
dimensions of the subspaces in the filtration of $E_x$ are determined by the conjugacy
class of the subgroup $H_x$. Conversely, these dimensions determine the conjugacy class of $H_x$.
\end{remark}

\begin{remark}\label{rem0}
If $H_x$ is a normal subgroup of $G$, then the action of $G$ on $(E_G)_x$ produces an
action of the quotient group $G/H_x$ on the quotient manifold $(E_G)_x/H_x$. This
action of $G/H_x$ on $(E_G)_x/H_x$ is evidently free and transitive. In other words,
$(E_G)_x/H_x$ is a torsor for the group
$G/H_x$. Therefore, if $H_x$ is a normal subgroup
of $G$ for all $x\, \in\, D$, then ${\mathcal F}(E_G)$ in \eqref{e3} is a torsor for the
group $\prod_{x\in D} G/H_x$. The special case where 
$(G,\, \{H_x\}_{x\in D})\,=\, (\text{GL}(r,{\mathbb C}),\, \{\text{I}_{r\times r}\}_{x\in D})$
is treated in \cite{BLP}.
\end{remark}

Let $\mathfrak g$ denote the Lie algebra of $G$. For any $x\, \in\, D$, the Lie algebra of the
subgroup $H_x$ in \eqref{hx}
will be denoted by ${\mathfrak h}_x$. Since the adjoint action of $H_x$ on $\mathfrak g$
preserves the sub-algebra ${\mathfrak h}_x$, the quotient space ${\mathfrak g}/{\mathfrak h}_x$
is equipped with an action of $H_x$ induced by the adjoint action of $H_x$.

The quotient map $(E_G)_x\, \longrightarrow\, (E_G)_x/H_x$ defines a holomorphic principal
$H_x$--bundle over the complex manifold $(E_G)_x/H_x$. Let
$$
V^0_x\,:=\, (E_G)_x\times^{H_x} ({\mathfrak g}/{\mathfrak h}_x)\, \longrightarrow\, (E_G)_x/H_x
$$
be the holomorphic vector bundle over $(E_G)_x/H_x$ associated to this holomorphic principal $H_x$--bundle
$(E_G)_x\, \longrightarrow\, (E_G)_x/H_x$ for the above $H_x$--module
${\mathfrak g}/{\mathfrak h}_x$. Then the holomorphic tangent bundle
of the space of all framings ${\mathcal F}(E_G)$ defined in \eqref{e3} has the expression
\begin{equation}\label{tp}
T{\mathcal F}(E_G)\,=\, \bigoplus_{x\in D} \widehat{p}^*_x V^0_x\, ,
\end{equation}
where $\widehat{p}_x$ is the projection in \eqref{e4}.

Henceforth, $G$ will always be assumed to be a connected complex reductive affine algebraic group.
The subgroup $H_x\, \, \subset\, G$ in \eqref{hx} is assumed to be Zariski closed for every $x\, \in\, D$.

Since the group $G$ is reductive, its Lie algebra $\mathfrak g$ admits $G$--invariant nondegenerate
symmetric bilinear forms. To construct such a form, consider the decomposition ${\mathfrak g}\,=\,
Z({\mathfrak g})\oplus [{\mathfrak g},\, {\mathfrak g}]$, where $Z({\mathfrak g})$ is the center of
${\mathfrak g}$. Take the Killing form $\kappa$ on $[{\mathfrak g},\, {\mathfrak g}]$ and take
any nondegenerate symmetric bilinear form $\sigma'$ on $Z({\mathfrak g})$; the direct sum
$\sigma'\oplus \kappa$ is a $G$--invariant nondegenerate
symmetric bilinear form on $Z({\mathfrak g})\oplus [{\mathfrak g},\, {\mathfrak g}] \,=\,
\mathfrak g$. Fix a $G$--invariant nondegenerate symmetric bilinear form
\begin{equation}\label{e5}
\sigma\, :\, \text{Sym}^2({\mathfrak g})\, \longrightarrow\,\mathbb C
\end{equation}
on $\mathfrak g$.

Take a holomorphic principal $G$--bundle $E_G$ on $X$. Let $\text{ad}(E_G)$ be the adjoint vector
bundle over $X$ associated to $E_G$ for the adjoint action of $G$ on $\mathfrak g$. Therefore, each
fiber of $\text{ad}(E_G)$ is a Lie algebra isomorphic to $\mathfrak g$. More precisely,
for any $y\, \in\, X$, there is an isomorphism of Lie algebras
${\mathfrak g}\, \stackrel{\sim}{\longrightarrow}\,
\text{ad}(E_G)_y$ which is unique up to automorphisms of $\mathfrak g$ given by the adjoint action
of the elements of $G$. Using such an
isomorphism ${\mathfrak g}\, \longrightarrow\,
\text{ad}(E_G)_y$, the $G$--invariant form $\sigma$ in \eqref{e5}
produces a symmetric nondegenerate bilinear form on the fiber $\text{ad}(E_G)_y$; note that
this bilinear form on $\text{ad}(E_G)_y$ does not depend on the choice of the above isomorphism
${\mathfrak g}\, \longrightarrow\, \text{ad}(E_G)_y$ because $\sigma$ is $G$--invariant. Let
\begin{equation}\label{e6}
\widehat{\sigma}\, :\, \text{Sym}^2({\rm ad}(E_G))\, \longrightarrow\,{\mathcal O}_X
\end{equation}
be the bilinear form constructed as above using $\sigma$. Let
$$
T^p_{\rm rel}\, \subset\, TE_G
$$
be the relative tangent bundle for the projection $p$ in \eqref{e2}.
The action of $G$ on $E_G$ produces an action of $G$ on $TE_G$. This action preserves
the subbundle $T^p_{\rm rel}$ because of the condition
in \eqref{qp}. The trivial holomorphic vector bundle
$E_G\times{\mathfrak g}\, \longrightarrow\, E_G$ equipped with the action of $G$, given by the
action of $G$ on $E_G$ and the adjoint action of $G$ on $\mathfrak g$, is identified with
$T^p_{\rm rel}$; this identification between $T^p_{\rm rel}$ and
$E_G\times{\mathfrak g}$ is evidently $G$--equivariant. The quotient $T^p_{\rm rel}/G$
is a holomorphic vector bundle over $E_G/G\,=\, X$. This holomorphic vector bundle
$T^p_{\rm rel}/G$ over $X$ is holomorphically identified with
the adjoint vector bundle $\text{ad}(E_G)$.

Let $\phi\, :\, D\, \longrightarrow\, \bigcup_{x\in D} (E_G)_x/H_x$ be a framing on $E_G$. For
each $x\, \in\, D$, let
\begin{equation}\label{qx}
q_x\,:\, (E_G)_x\,\longrightarrow\, (E_G)_x/H_x
\end{equation}
be the natural quotient map.

Using the framing $\phi$ we shall construct a subspace ${\mathcal H}_x\, \subset\,\text{ad}(E_G)_x$ for 
every $x\, \in\, D$. For that purpose, first recall that the elements of $\text{ad}(E_G)_x$ are the $G$--invariant 
sections of the vector bundle $T^p_{\rm rel}\vert_{p^{-1}(x)}\, \longrightarrow\, p^{-1}(x)$, 
where $p$ is the projection in \eqref{e2}. Consider all $G$--invariant
sections $$v\, \in\, H^0(p^{-1}(x),\, T^p_{\rm rel}\vert_{p^{-1}(x)})^G$$ such that
the restriction $v\vert_{q^{-1}_x(\phi(x))}$ satisfies the condition that
$$
v\vert_{q^{-1}_x(\phi(x))}\,\subset\, q^{-1}_x(\phi(x))\times {\mathfrak h}_x\,\subset\, q^{-1}_x
(\phi(x))\times {\mathfrak g}\,=\, T^p_{\rm rel}\vert_{q^{-1}_x(\phi(x))}\, ,
$$
where $q_x$ is the projection in \eqref{qx}, and ${\mathfrak h}_x$
as before is the Lie algebra of $H_x$; here we are using
the earlier observation that $T^p_{\rm rel}\,=\, E_G\times{\mathfrak g}$, and we also have
identified the section $v\vert_{q^{-1}_x(\phi(x))}$ with the subset of
$T^p_{\rm rel}\vert_{q^{-1}_x(\phi(x))}$ given by its image. Let
\begin{equation}\label{e7}
{\mathcal H}_x\, \subset\,\text{ad}(E_G)_x
\end{equation}
be the subspace defined by all such $v$. Note that ${\mathcal H}_x$ is a Lie subalgebra of
$\text{ad}(E_G)_x$ which is identified with ${\mathfrak h}_x$ by an isomorphism
that is unique up to
automorphisms of ${\mathfrak h}_x$ given by the adjoint action of the elements of the group $H_x$.

The following construction of ${\mathcal H}_x$ was suggested by the referee.

\begin{remark}\label{remH}
The framing $\phi$ produces a reduction of structure group of the principal
$G$--bundle $(E_G)_x\, \longrightarrow\, x$, defined just over the point $x$, to the subgroup
$H_x\, \subset\, G$ for each $x\, \in\, D$. Indeed,
$$
E^x_{H_x}\, :=\, q^{-1}_x(\phi(x))\, \subset\, (E_G)_x
$$
is a principal $H_x$--bundle, where $q_x$ and $\phi$ are the maps in
\eqref{qx} and \eqref{e2} respectively. So we have
$$
\text{ad}(E^x_{H_x})\, \subset\, \text{ad}((E_G)_x)\,=\, \text{ad}(E_G)_x\, .
$$
The subspace ${\mathcal H}_{x}$ in \eqref{e7} coincides with $\text{ad}(E^x_{H_x})$.
\end{remark}

For every $x\, \in\, D$, let
\begin{equation}\label{f1}
{\mathcal H}^\perp_x\, \subset\,\text{ad}(E_G)_x
\end{equation}
be the annihilator of
${\mathcal H}_x$ with respect to the bilinear form $\widehat{\sigma}(x)$ constructed in \eqref{e6}.

A \textit{Higgs field} on the framed principal $G$--bundle $(E_G,\, \phi)$ is a holomorphic section
$$
\theta\, \in\, H^0(X,\, \text{ad}(E_G)\otimes K_X(D))
$$
such that $$\theta(x)\, \in\, {\mathcal H}^\perp_x\, \subset\,
\text{ad}(E_G)_x$$ for every $x\, \in\, D$; recall from \eqref{res}
that $K_X(D)_x\,=\, \mathbb C$, so we have $(\text{ad}(E_G)\otimes K_X(D))_x\, =\, \text{ad}(E_G)_x$,
and hence we have $\theta(x)\, \in\, \text{ad}(E_G)_x$.

Notice that in \cite{BLP} the interlinking between the framing and the Higgs field was not
explicit due to the assumption that $H_{x}\,=\,e$ for all $x\,\in\, D$.

\begin{definition}\label{def1}
A \textit{framed} $G$--\textit{Higgs bundle} is a triple of the form $(E_G,\, \phi,\, \theta)$,
where $(E_G,\, \phi)$ is a framed principal $G$--bundle on $X$, and $\theta$ is a Higgs field
on $(E_G,\, \phi)$.
\end{definition}

The following remark is due to the referee.

\begin{remark}\label{remP2}
As in Remark \ref{remP}, assume that each $H_x$ is a parabolic subgroup of $G$.
Therefore, a framed principal $G$--bundle $(E_G,\, \phi)$ is also a quasiparabolic $G$--bundle.
Then a Higgs field on $(E_G,\, \phi)$ is a logarithmic Higgs field $\theta$ on $E_G$,
with polar part on $D$, such that the residue of $\theta$ at every $x\, \in\, D$
is nilpotent with respect to the quasiparabolic structure at $x$. Recall from
Remark \ref{remP} that when $G\,=\, \text{GL}(r,{\mathbb C})$ and $H_x$ is a parabolic
subgroup for all $x\, \in\, D$, a framed principal $G$--bundle
$(E_G,\, \phi)$ corresponds to a holomorphic vector bundle $E$ on $X$ of rank $r$ equipped
with a filtration of subspaces of $E_x$ for every $x\, \in\, D$. In that case, a Higgs field
on $(E_G,\, \phi)$ is a strongly parabolic Higgs field on the quasiparabolic
bundle $E$; see \cite{LM} for strongly versus non-strongly parabolic Higgs fields.
\end{remark}

\subsection{Stability of framed $G$--Higgs bundles}

Recall that a parabolic subgroup of $G$ is a Zariski closed connected subgroup $P$ such that
the quotient variety $G/P$ is projective.
Let $Z_0(G)$ denote the (unique) maximal connected subgroup of the center of $G$. A character
$$\widehat{\chi}\, :\, P\, \longrightarrow\, {\mathbb C}^*\,=\, {\mathbb C}\setminus\{0\}$$
of a parabolic subgroup $P\, \subset\, G$
is called \textit{strictly anti-dominant} if
\begin{itemize}
\item $\widehat{\chi}$ is trivial on $Z_0(G)$ (note that $Z_0(G)\, \subset\, P$), and

\item the holomorphic line bundle over $G/P$ associated to the holomorphic principal $P$--bundle
$G\, \longrightarrow\, G/P$ for the character $\widehat{\chi}$ of $P$ is ample.
\end{itemize}

The unipotent radical of a parabolic subgroup $P\, \subset\, G$ is denoted by
$R_u(P)$. The quotient $P/R_u(P)$ is a reductive affine complex algebraic group. A Zariski closed connected reductive
complex algebraic subgroup $L(P)\, \subset\, P$ is called a Levi factor of $P$ if the composition
of maps
$$
L(P)\, \hookrightarrow\, P\, \longrightarrow\, P/R_u(P)
$$
is an isomorphism \cite[p. 158, \S~11.22]{Bor}. There are Levi factors of $P$, moreover, any two Levi factors
of $P$ differ by the inner automorphism of $P$ produced by an element of the unipotent radical
$R_u(P)$ \cite[p. 158, \S~11.23]{Bor}, \cite[\S~30.2, p. 184]{Hu}.

Let $E_G$ be a holomorphic principal $G$--bundle over $X$. Let 
$$
\theta\, \in\, H^0(X,\, \text{ad}(E_G)\otimes K_X(D))
$$
be a holomorphic section. The $D$--twisted $G$--Higgs bundle $(E_G,\, \theta)$ is called \textit{stable}
(respectively, \textit{semistable}) if for all triples of the form $(P,\, E_P,\, \widehat{\chi})$, where
\begin{itemize}
\item{} $P\, \subset\, G$ is a proper (not necessarily maximal) parabolic subgroup,

\item $E_P\, \subset\, E_G$ is a holomorphic reduction of structure group of $E_G$ to
$P$ over $X$ such that
$$
\theta\, \in\, H^0(X,\, \text{ad}(E_P)\otimes K_X(D))\, \subset\, H^0(X,\, \text{ad}(E_G)\otimes K_X(D))\, ,
$$
and

\item $\widehat{\chi}$ is a strictly anti-dominant character of $P$,
\end{itemize}
the inequality
$$
\text{degree}(E_P(\widehat{\chi}))\, > \, 0
$$
(respectively, $\text{degree}(E_P(\widehat{\chi}))\, \geq \, 0$) holds, where $E_P(\widehat{\chi})$ is the
holomorphic line bundle over $X$ associated to the holomorphic principal $P$--bundle $E_P$ for the
character $\widehat{\chi}$ of $P$. (See \cite{H}, \cite{Si1}, \cite{Si2}, \cite{Ra2}, \cite{Ra}, \cite{RS},
\cite{AB}, \cite{BG}.)

Let $P$ be a parabolic subgroup of $G$ and $E_P\, \subset\, E_G$
a holomorphic reduction of structure group of $E_G$ over $X$ to the subgroup $P$. Such a reduction
of structure group is called \textit{admissible} if for every character $\widehat{\chi}$ of $P$ trivial
on $Z_0(G)$, the associated holomorphic line bundle $E_P(\widehat{\chi})$ on $X$ is of
degree zero.

A $D$--twisted $G$--Higgs bundle $(E_G,\, \theta)$ is called \textit{polystable} if either $E_G$
is stable, or there is a parabolic subgroup $P\, \subset\, G$ and a holomorphic reduction of
structure group $E_{L(P)}\, \subset\, E_G$ over $X$ to a Levi factor $L(P)$ of $P$, such that
\begin{itemize}
\item $\theta\, \in\, H^0(X,\, \text{ad}(E_{L(P)})\otimes K_X(D))\, \subset\,
H^0(X,\, \text{ad}(E_G)\otimes K_X(D))$,

\item the holomorphic $L(P)$--Higgs bundle $(E_{L(P)},\, \theta)$ is stable, and

\item the reduction of structure group of $E_G$ to $P$ given by the extension of the structure
group of $E_{L(P)}$ to $P$, corresponding to the inclusion of $L(P)$ in $P$, is admissible.
\end{itemize}
(See \cite{H}, \cite{Si1}, \cite{Si2}, \cite{RS}, \cite{AB}, \cite{BG}.) In particular, a
polystable $D$--twisted $G$--Higgs bundle is semistable.

\begin{definition}\label{def2}
A framed $G$--Higgs bundle $(E_G,\, \phi,\, \theta)$ over $X$ is called \textit{stable}
if the $D$--twisted $G$--Higgs bundle
$(E_G,\, \theta)$ is stable. Similarly, $(E_G,\, \phi,\, \theta)$ is called
\textit{semistable} (respectively, \textit{polystable}) if $(E_G,\, \theta)$ is semistable
(respectively, polystable).
\end{definition}

It should be mentioned that there are other definitions of (semi)stability of a
framed $G$--Higgs bundle. The one given in Definition \ref{def2} is in fact a special case.

\begin{remark}\label{stability-framed}
When $G\,=\,\text{GL}(r,\mathbb{C})$ and 
$\text{H}_{x}\,=\,\text{I}_{r\times r}$ for all $x\,\in\, D$, Definition \ref{def2} reduces to
the definition of (semi)stable framed Higgs bundles given in \cite[Definition 2.2]{BLP} (see
also \cite[Remark 2.6]{BLP}).
\end{remark}

\section{Infinitesimal deformations}\label{se3}

\subsection{Infinitesimal deformations of a framed principal bundle}\label{se3.1}

The infinitesimal deformations of a holomorphic principal $G$--bundle $E_G$ over
$X$ are parametrized by $H^1(X,\, \text{ad}(E_G))$ (see \cite{Do}, \cite[Appendix III]{Se}).

To describe the space of all infinitesimal deformations of a framed holomorphic principal $G$--bundle,
first consider the special case where $H_x\, =\, \{e\}$ for every $x\, \in\, D$ (see \eqref{hx}); as
before the identity element of $G$ is denoted by $e$. In this case, the infinitesimal deformations
of a framed holomorphic principal $G$--bundle $(E_G,\, \phi)$ are parametrized by $H^1(X,\,
\text{ad}(E_G)\otimes {\mathcal O}_X(-D))$; in the
special case where $G\,=\, \text{GL}(r,{\mathbb C})$ and $H_x\, =\,
\{{\rm I}_{r\times r}\}$ for all $x\,\in\, D$, this is Lemma 2.5 of \cite{BLP}. For notational convenience, the
tensor product $\text{ad}(E_G)\otimes {\mathcal O}_X(-D)$ will be denoted by $\text{ad}(E_G)(-D)$.
Consider the following short exact sequence of coherent analytic sheaves on $X$:
$$
0\, \longrightarrow\, \text{ad}(E_G)(-D)\, \longrightarrow\, \text{ad}(E_G) \, \longrightarrow\, \text{ad}(E_G)\vert_D
\, \longrightarrow\, 0\, .
$$
Let
\begin{equation}\label{e8}
\, \longrightarrow\, H^0(X,\, \text{ad}(E_G)) \, \longrightarrow\,
H^0(X,\, \text{ad}(E_G)\vert_D) \, \stackrel{\alpha_1}{\longrightarrow}\,
H^1(X,\, \text{ad}(E_G)(-D))
\end{equation}
$$
\stackrel{\alpha_2}{\longrightarrow}\, H^1(X,\, \text{ad}(E_G)) \, \longrightarrow\,
H^1(X,\, \text{ad}(E_G)\vert_D)\,=\, 0
$$
be the long exact sequence of cohomologies associated to it; we have
$H^1(X,\, \text{ad}(E_G)\vert_D)\,=\, 0$ in \eqref{e8}
because $\text{ad}(E_G)\vert_D$ is a torsion
sheaf supported on points. The homomorphism $\alpha_2$ in \eqref{e8}
sends an infinitesimal deformation of $(E_G,\, \phi)$ to the infinitesimal deformation
of $E_G$ obtained from it by simply forgetting the framing. Consider the space of framings
${\mathcal F}(E_G)$ in \eqref{e3} (at present $H_x\, =\, \{e\}$ for every $x\, \in\, D$). Note that
$$
T_{\phi}{\mathcal F}(E_G)\,=\, H^0(X,\, \text{ad}(E_G)\vert_D) \,=\, {\mathfrak g}^D\,:=\,
\bigoplus_{x\in D} \mathfrak g\, .
$$
Indeed, $\phi(x)\, \in\, (E_G)_x$ identifies the fiber $(E_G)_x$ with $G$ by sending
any $g\, \in\, G$ to $\phi(x)g \, \in\, (E_G)_x$. This trivialization of $(E_G)_x$
produces an identification of the Lie algebra $\text{ad}(E_G)_x$ with $\mathfrak g$; indeed,
both $\text{ad}(E_G)_x$ and $\mathfrak g$
are identified with the right $G$--invariant vector fields on $(E_G)_x$ and $G$ respectively.
The homomorphism $\alpha_1$ in \eqref{e8} gives the infinitesimal deformations of the
framed principal $G$--bundle $(E_G,\, \phi)$ obtained by deforming the framing while keeping the
holomorphic principal $G$--bundle $E_G$ fixed.

Now we consider the general case of framings. The subgroups $H_x\, \subsetneq\, G$, $x\, \in
\, D$, in \eqref{hx} are no longer assumed to be trivial.

Consider the subspaces ${\mathcal H}_x\, \subset\,\text{ad}(E_G)_x$, $x\, \in\, D$, constructed
in \eqref{e7}. Let $\text{ad}_\phi(E_G)$ be the holomorphic vector bundle on $X$ defined by the
following short exact sequence of coherent analytic sheaves:
\begin{equation}\label{e9a}
0\, \longrightarrow\, \text{ad}_\phi(E_G)\, \longrightarrow\, \text{ad}(E_G) \, \longrightarrow\,
\bigoplus_{x\in D} \text{ad}(E_G)_x/{\mathcal H}_x\, \longrightarrow\, 0\, ,
\end{equation}
where $\text{ad}(E_G)_x/{\mathcal H}_x$ is supported at $x$. Let
\begin{equation}\label{e9}
\longrightarrow\, H^0(X,\, \text{ad}(E_G)) \, \longrightarrow\,
H^0(X,\, \bigoplus_{x\in D} \text{ad}(E_G)_x/{\mathcal H}_x) \, \stackrel{\widehat{\alpha}_1}{\longrightarrow}\,
H^1(X,\, \text{ad}_\phi(E_G))
\end{equation}
$$
\stackrel{\widehat{\alpha}_2}{\longrightarrow}\, H^1(X,\, \text{ad}(E_G)) \, \longrightarrow\,
H^1(X,\, \bigoplus_{x\in D} \text{ad}(E_G)_x/{\mathcal H}_x)\,=\, 0
$$
be the long exact sequence of cohomologies associated to the short exact sequence of coherent
analytic sheaves in \eqref{e9a}; we have $H^1(X,\, \bigoplus_{x\in D} \text{ad}(E_G)_x/{\mathcal 
H}_x)\,=\, 0$ in \eqref{e9} because $\text{ad}(E_G)_x/{\mathcal H}_x$ is a torsion sheaf
supported on points.

\begin{lemma}\label{lem-3}
\mbox{}
\begin{enumerate}
\item The infinitesimal deformations of any framed holomorphic principal $G$--bundle
$(E_G,\, \phi)$ on $X$ are parametrized by $H^1(X,\, {\rm ad}_\phi(E_G))$, where ${\rm ad}_\phi(E_G)$
is constructed in \eqref{e9a}.

\item The homomorphism $\widehat{\alpha}_2$ in \eqref{e9}
sends an infinitesimal deformation of $(E_G,\, \phi)$ to the infinitesimal deformation
of $E_G$ obtained from it by simply forgetting the framing.

\item Consider the space of framings
${\mathcal F}(E_G)$ on $E_G$ in \eqref{e3}. The tangent space of it at $\phi$ is
$$
T_{\phi}{\mathcal F}(E_G)\,=\, \bigoplus_{x\in D} {\rm ad}(E_G)_x/{\mathcal H}_x
$$
(see \eqref{tp}).
The homomorphism $\widehat{\alpha}_1$ in \eqref{e9} gives all the infinitesimal deformations of the
framed principal $G$--bundle $(E_G,\, \phi)$ obtained by deforming the framing while keeping the
holomorphic principal $G$--bundle $E_G$ fixed.
\end{enumerate}
\end{lemma}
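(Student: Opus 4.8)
The plan is to reduce the general framing case to the trivial-framing case ($H_x=\{e\}$) that was established in \cite{BLP}, using the local description of framings in terms of reductions of structure group over the points of $D$ recorded in Remark \ref{remH}. Concretely, I would begin with part (3), which is essentially a matter of unravelling definitions: the fiber $(E_G)_x/H_x$ is, by the associated-bundle description \eqref{tp}, a complex manifold whose tangent space at $\phi(x)$ is $(E_G)_x\times^{H_x}({\mathfrak g}/{\mathfrak h}_x)$ evaluated at $\phi(x)$, and under the canonical identification ${\rm ad}(E_G)_x\,=\,(E_G)_x\times^{G}{\mathfrak g}$ this is precisely ${\rm ad}(E_G)_x/{\mathcal H}_x$, since ${\mathcal H}_x$ is identified with ${\mathfrak h}_x$ (Remark \ref{remH}). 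This also identifies $H^0(X,\,\bigoplus_{x\in D}{\rm ad}(E_G)_x/{\mathcal H}_x)$ with $T_\phi{\mathcal F}(E_G)$ and, under that identification, the connecting map $\widehat\alpha_1$ in \eqref{e9} with the differential of the map that sends a framing to the pair (fixed bundle, varying framing).

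For part (1), I would argue via \v{C}ech cocycles. A framed principal $G$--bundle is given, with respect to a cover $\{U_i\}$ adapted to $D$ (so each $x\in D$ lies in exactly one $U_i$), by transition functions $g_{ij}:U_i\cap U_j\to G$ together with, for each $x\in D$ lying in $U_{i(x)}$, a point $\phi(x)\in(E_G)_x/H_{i(x)}$, i.e. an $H_x$--orbit in the fiber. An infinitesimal deformation perturbs $g_{ij}$ by $g_{ij}\exp(\epsilon c_{ij})$ with $c_{ij}$ a section of ${\rm ad}(E_G)$ on $U_i\cap U_j$, and perturbs each $\phi(x)$ within $(E_G)_x/H_x$. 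Gauge transformations act by sections $\lambda_i$ of ${\rm ad}(E_G)$ on $U_i$; the deformation of $g_{ij}$ changes by $\lambda_i-\lambda_j$ as usual, while the framing at $x$ is only affected by $\lambda_{i(x)}(x)$ \emph{modulo} ${\mathcal H}_x$, since an element of ${\mathcal H}_x\,(=\,{\rm ad}(E^x_{H_x}))$ moves $q_x^{-1}(\phi(x))$ inside itself and hence fixes $\phi(x)$. Matching this bookkeeping against the hypercohomology of the two-term complex $\bigl[\,\text{\v C}^\bullet({\rm ad}(E_G))\to\text{\v C}^\bullet\bigl(\bigoplus_x{\rm ad}(E_G)_x/{\mathcal H}_x\bigr)\,\bigr]$ — equivalently against $H^1$ of the kernel sheaf ${\rm ad}_\phi(E_G)$ in \eqref{e9a}, since the target is a skyscraper and the resulting hypercohomology spectral sequence degenerates to the long exact sequence \eqref{e9} — shows that the space of deformations modulo gauge is exactly $H^1(X,\,{\rm ad}_\phi(E_G))$. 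Part (2) then follows formally: forgetting the framing forgets the skyscraper data and the cocycle $c_{ij}$ maps to its class in $H^1(X,\,{\rm ad}(E_G))$, which is precisely the map $\widehat\alpha_2$ in the long exact sequence.

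The main obstacle I anticipate is part (1), and specifically making the gauge-quotient argument rigorous rather than purely formal: one must check that the subsheaf ${\rm ad}_\phi(E_G)$ genuinely computes the obstruction-free first-order deformation functor, i.e. that every class in $H^1(X,\,{\rm ad}_\phi(E_G))$ is realized by an honest family and that two cocycles give isomorphic framed bundles iff they differ by a coboundary valued in ${\rm ad}_\phi(E_G)$. For the trivial-framing case this is Lemma 2.5 of \cite{BLP}; for general $H_x$ the only new point is that the relevant normal directions to the $H_x$--orbit $\phi(x)$ in $(E_G)_x/H_x$ are captured by ${\rm ad}(E_G)_x/{\mathcal H}_x$, which is exactly the content of Remark \ref{remH} together with \eqref{tp}. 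So in practice I would present part (3) first as the bridge, then deduce (1) by running the \cite{BLP} argument verbatim with $\text{ad}(E_G)(-D)$ replaced by $\text{ad}_\phi(E_G)$ and $\mathfrak g^D$ replaced by $\bigoplus_{x\in D}{\rm ad}(E_G)_x/{\mathcal H}_x$, and finally note (2) as an immediate consequence of the functoriality of the connecting maps.
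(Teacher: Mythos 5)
Your proposal is correct and follows essentially the same route as the paper: the paper's (very terse) proof identifies $\mathrm{ad}_\phi(E_G)$ as the sheaf of infinitesimal automorphisms of $E_G$ (i.e., $G$--invariant vertical vector fields) that preserve the framing, and declares that the lemma follows, omitting the details. Your \v{C}ech-cocycle bookkeeping --- perturbing transition functions by $\mathrm{ad}(E_G)$--valued cochains and the framing inside $\bigoplus_{x\in D}\mathrm{ad}(E_G)_x/{\mathcal H}_x$, quotienting by gauge transformations acting through the quotient map of \eqref{e9a}, and invoking the quasi-isomorphism of the two-term complex with its kernel sheaf --- is precisely the standard unwinding of that statement, so you are supplying the details the paper omits rather than taking a different path.
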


\begin{proof}
First note that for any open subset $U\, \subset\, X$, the space of all holomorphic
sections of $\text{ad}(E_G)\vert_U$ is the space of all holomorphic
vector fields $v$ on $p^{-1}(U)\, \subset\, E_G$, where $p$ is the
projection in \eqref{e2}, satisfying the following
two conditions:
\begin{itemize}
\item $v$ is invariant under the action of $G$ on $E_G$, and

\item $v$ is vertical for the projection $p$.
\end{itemize}
The subsheaf ${\rm ad}_\phi(E_G)\, \subset\, \text{ad}(E_G)$ coincides with the subsheaf that also
preserves the framing $\phi$. The lemma follows from this; we omit the details.
\end{proof}

For any two subgroups $H'\, \subset\, H\, \subsetneq\, G$, and any
holomorphic principal $G$--bundle $F_G$ on $X$,
there is a natural projection $(F_G)_y/H' \, \longrightarrow\, (F_G)_y/H$
for any point $y\,\in\, X$. Therefore, if we have $H'_x\, \subset\, H_x\, \subsetneq\, G$ 
for every $x\, \in\, D$, then a framing of $E_G$ for $\{H'_x\}_{x\in D}$ produces a framing of 
$E_G$ for $\{H_x\}_{x\in D}$. In particular, a framing of $E_G$ for the trivial groups 
$\{e\}_{x\in D}$ produces a framing of $E_G$ for $\{H_x\}_{x\in D}$.

From \eqref{e9a} we conclude that $\text{ad}_\phi(E_G)$ fits in the
following short exact sequence of sheaves on $X$
\begin{equation}\label{g1}
0\, \longrightarrow\, \text{ad}(E_G)(-D)\,:=\, \text{ad}(E_G)\otimes {\mathcal O}_X(-D)
\, \stackrel{\zeta}{\longrightarrow}\, \text{ad}_\phi(E_G) \, \longrightarrow\,
\bigoplus_{x\in D} {\mathcal H}_x\, \longrightarrow\, 0\, .
\end{equation}
Let
$$
\zeta_*\, :\, H^1(X,\, \text{ad}(E_G)(-D))\, \longrightarrow\, H^1(X,\, \text{ad}_\phi(E_G))
$$
be the homomorphism of cohomologies induced by the homomorphism
$\zeta$ in \eqref{g1}. This homomorphism
$\zeta_*$ coincides with the homomorphism of infinitesimal deformations
corresponding to the above map from the framings of a holomorphic principal $G$--bundle
$F_G$ for $\{e\}_{x\in D}$ to the framings of $F_G$ for $\{H_x\}_{x\in D}$.

\subsection{Infinitesimal deformations of a framed $G$--Higgs bundle}

Take a
holomorphic principal $G$--bundle $E_G$ on $X$, and also take a holomorphic section 
$$\theta\, \in\, H^0(X,\, \text{ad}(E_G)\otimes K_X(D))\, .$$ Let
$$
f_\theta\, :\, \text{ad}(E_G)\, \longrightarrow\, \text{ad}(E_G)\otimes K_X(D)
$$
be the ${\mathcal O}_X$--linear homomorphism defined by $t\,\longmapsto\, [\theta,\, t]$.
Now we have the $2$-term complex
\begin{equation}\label{e10}
{\mathcal C}'_{\bullet} \,:\ \
{\mathcal C}'_{0}\,=\, \text{ad}(E_G) \, \stackrel{f_\theta}{\longrightarrow}\,
{\mathcal C}'_{1}\,=\, \text{ad}(E_G)\otimes K_X(D)\, ,
\end{equation}
where ${\mathcal C}'_{i}$ is at the $i$-th position.

The following lemma is proved in \cite[p.~220, Theorem 2.3]{BR}, \cite[p.~399, Proposition 3.1.2]{Bot},
\cite[p.~271, Proposition 7.1]{Ma} (see also \cite{Bi}).

\begin{lemma}\label{lem-1}
The infinitesimal deformations of the $D$--twisted $G$--Higgs bundle
$(E_G,\, \theta)$ are parametrized by elements
of the first hypercohomology ${\mathbb H}^1({\mathcal C}'_{\bullet})$, where ${\mathcal C}'_{\bullet}$
is the complex in \eqref{e10}.
\end{lemma}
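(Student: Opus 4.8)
\textbf{Proof plan for Lemma \ref{lem-1}.}

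The plan is to realize $D$--twisted $G$--Higgs bundles as a deformation problem controlled by the $2$-term complex ${\mathcal C}'_{\bullet}$ in \eqref{e10}, and then invoke the standard dictionary between first-order deformations and first hypercohomology. First I would recall that a first-order deformation of a $D$--twisted $G$--Higgs bundle $(E_G,\, \theta)$ over $\text{Spec}\,{\mathbb C}[\epsilon]/(\epsilon^2)$ consists of a deformation $\widetilde{E}_G$ of the holomorphic principal $G$--bundle $E_G$ together with an extension $\widetilde{\theta}\, \in\, H^0(X,\, \text{ad}(\widetilde{E}_G)\otimes K_X(D))$ of the Higgs field. Working on a Leray cover $\{U_i\}$ trivializing $E_G$, a deformation of $E_G$ is given by a \v{C}ech $1$-cochain $\{a_{ij}\}$ with values in $\text{ad}(E_G)$ (the usual description \cite{Do}, \cite[Appendix III]{Se}), and deforming $\theta$ compatibly amounts to choosing local sections $\{b_i\}$ of $\text{ad}(E_G)\otimes K_X(D)$ measuring how $\widetilde\theta$ fails to be constant, subject to the constraint $b_j - b_i = [\theta,\, a_{ij}] = f_\theta(a_{ij})$ on overlaps. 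This is exactly the condition that the pair $(\{a_{ij}\},\, \{b_i\})$ is a $1$-cocycle for the \v{C}ech complex computing ${\mathbb H}^1({\mathcal C}'_{\bullet})$; gauge equivalence of deformations corresponds precisely to the hypercohomology coboundaries, giving a canonical bijection between the set of isomorphism classes of first-order deformations and ${\mathbb H}^1({\mathcal C}'_{\bullet})$.

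The key steps, in order, are: (i) set up the \v{C}ech bicomplex of the two-term complex ${\mathcal C}'_{\bullet}$ with respect to an affine (or Stein/Leray) open cover of $X$; (ii) write down an arbitrary first-order deformation of the bundle as a class in $H^1(X,\, \text{ad}(E_G))$ represented by $\{a_{ij}\}$, and observe that the transition functions of $\text{ad}(\widetilde E_G)\otimes K_X(D)$ are deformed by the same cocycle via the adjoint action; (iii) compute the obstruction to extending $\theta$ and identify it with the vanishing, up to coboundary, of $f_\theta$ applied to $\{a_{ij}\}$, so that the data of an extension is a $0$-cochain $\{b_i\}$ with $\delta\{b_i\} = -f_\theta(\{a_{ij}\})$ in the appropriate sign convention; (iv) check that two such deformations are isomorphic (as deformations of the Higgs bundle, i.e.\ via a gauge transformation reducing to the identity mod $\epsilon$) if and only if the associated total cochains differ by a hypercohomology coboundary; (v) conclude the bijection, and note that it is functorial and linear. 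Since all of this is already carried out in the cited references \cite[Theorem 2.3]{BR}, \cite[Proposition 3.1.2]{Bot}, \cite[Proposition 7.1]{Ma}, the cleanest exposition is to quote them; if a self-contained argument is wanted, one performs the \v{C}ech computation above.

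The main obstacle, such as it is, is purely bookkeeping: keeping track of the correct sign conventions in the \v{C}ech differential of the total complex of the bicomplex, and verifying carefully that the notion of ``isomorphism of deformations'' used on the moduli-theoretic side (gauge equivalence fixing the central fiber) matches the hypercohomology coboundaries and not some coarser or finer equivalence. One also needs the (routine) fact that $H^1(X,\, \text{ad}(E_G)\otimes K_X(D))$ governs the obstructions while $H^0$ of the same sheaf plays no extra role here because $\theta$ is merely a section, not subject to an integrability condition; in the one-dimensional setting there is no curvature term, which is why the two-term complex suffices and no higher obstruction appears. Given that the statement is attributed to existing literature, I would present the proof as a short reduction to the cited results together with a one-paragraph indication of the \v{C}ech-cocycle description above.
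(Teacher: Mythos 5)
Your proposal is correct and follows essentially the same route as the paper: the paper offers no argument for Lemma~\ref{lem-1} beyond attributing it to \cite{BR}, \cite{Bot} and \cite{Ma}, which is precisely the reduction you advocate, and your \v{C}ech-cocycle sketch (pairs $(\{a_{ij}\},\{b_i\})$ with $b_j-b_i=f_\theta(a_{ij})$, modulo hypercohomology coboundaries) is the standard computation carried out in those references. No gaps.
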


The following lemma gives the dimension of the infinitesimal deformations.

\begin{lemma}\label{lemn1}
Assume that ${\rm genus}(X)\, \geq\,1$. Let $(E_G,\, \theta)$ be a stable
$D$--twisted $G$--Higgs bundle. Then
$$
{\mathbb H}^0({\mathcal C}'_{\bullet})\,=\, \{v\, \in\, H^0(X,\, {\rm ad}(E_G))\, \mid\,
[\theta,\, v]\,=\, 0\}\,=\, Z({\mathfrak g})\, ,
$$
where $Z({\mathfrak g})\, \subset\, \mathfrak g$ as before is the center, and
$$
{\mathbb H}^2({\mathcal C}'_{\bullet})\,=\, 0\, .
$$
Moreover, $$\dim {\mathbb H}^1({\mathcal C}'_{\bullet})\,=\,\dim G\cdot (2\cdot({\rm genus}(X)-1)
+n)+\dim Z({\mathfrak g})\, ,$$ where $n\, =\,\# D$.
\end{lemma}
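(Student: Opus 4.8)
The plan is to compute the three hypercohomology groups $\mathbb{H}^i(\mathcal{C}'_\bullet)$ of the two-term complex in \eqref{e10} by exploiting the hypercohomology spectral sequence, together with the stability hypothesis. First I would handle $\mathbb{H}^0(\mathcal{C}'_\bullet)$: by definition this is the kernel of $f_\theta$ on global sections, i.e.\ $\{v \in H^0(X,\,\mathrm{ad}(E_G)) \mid [\theta,\,v] = 0\}$. Such a $v$ is an infinitesimal automorphism of the $D$--twisted $G$--Higgs bundle $(E_G,\,\theta)$, and the standard argument for stable objects shows this must be central: one reduces, via the $G$--invariant form $\sigma$, to the fact that a stable $D$--twisted Higgs bundle is in particular simple modulo the center, so $H^0$ of the automorphism complex is $Z(\mathfrak{g})$. (Concretely, the subbundle generated by a non-central such $v$ via its generalized eigenspace decomposition would contradict the stability inequality against an appropriate parabolic reduction preserved by $\theta$; this is where $\mathrm{genus}(X)\ge 1$ and stability enter. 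I would cite the references \cite{BR,Bot,Ma} for this well-known statement rather than reprove it.)

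Next I would treat $\mathbb{H}^2(\mathcal{C}'_\bullet)$. Since $\mathcal{C}'_\bullet$ is concentrated in degrees $0$ and $1$ on a curve, the only contribution to $\mathbb{H}^2$ in the spectral sequence is $H^1(X,\,\mathcal{C}'_1) = H^1(X,\,\mathrm{ad}(E_G)\otimes K_X(D))$ modulo the image of $H^1(f_\theta)$; equivalently one gets $\mathbb{H}^2 = \mathrm{coker}\big(H^1(X,\mathrm{ad}(E_G)) \xrightarrow{f_\theta} H^1(X,\mathrm{ad}(E_G)\otimes K_X(D))\big)$. By Serre duality on $X$, using the self-duality of $\mathrm{ad}(E_G)$ provided by $\widehat{\sigma}$ in \eqref{e6}, this cokernel is dual to $\ker\big(H^0(X,\mathrm{ad}(E_G)\otimes \mathcal{O}_X(-D)) \xrightarrow{f_\theta} H^0(X,\mathrm{ad}(E_G))\big)$, which is a subspace of $\{v \in H^0(X,\mathrm{ad}(E_G)) \mid [\theta,v]=0\}$ consisting of sections vanishing on $D$. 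Since that ambient space is $Z(\mathfrak{g})$ (constant sections) and $D \neq \emptyset$, a nonzero constant section cannot vanish on $D$, so the kernel is $0$ and hence $\mathbb{H}^2(\mathcal{C}'_\bullet) = 0$. I expect this duality bookkeeping — getting the twist by $\mathcal{O}_X(-D)$ and the Serre-dual identification exactly right — to be the main technical point to be careful about, though it is routine.

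Finally, with $\mathbb{H}^0$ and $\mathbb{H}^2$ in hand, $\dim\mathbb{H}^1$ follows from the Euler characteristic of the complex, which is additive: $\dim\mathbb{H}^0 - \dim\mathbb{H}^1 + \dim\mathbb{H}^2 = \chi(\mathcal{C}'_0) - \chi(\mathcal{C}'_1) = \chi(X,\mathrm{ad}(E_G)) - \chi(X,\mathrm{ad}(E_G)\otimes K_X(D))$. By Riemann--Roch, $\chi(X,\mathrm{ad}(E_G)) = \deg\mathrm{ad}(E_G) + \dim G\cdot(1-\mathrm{genus}(X))$ and $\deg\mathrm{ad}(E_G)=0$ since $\mathrm{ad}(E_G)$ is self-dual; while $\chi(X,\mathrm{ad}(E_G)\otimes K_X(D)) = \deg\mathrm{ad}(E_G) + \mathrm{rk}\cdot\deg(K_X(D)) + \dim G\cdot(1-\mathrm{genus}(X)) = \dim G\cdot(2\,\mathrm{genus}(X)-2+n) + \dim G\cdot(1-\mathrm{genus}(X))$. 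Subtracting, the Euler characteristic of the complex is $-\dim G\cdot(2(\mathrm{genus}(X)-1)+n)$, so
$$
\dim\mathbb{H}^1(\mathcal{C}'_\bullet) = \dim\mathbb{H}^0 + \dim\mathbb{H}^2 + \dim G\cdot(2(\mathrm{genus}(X)-1)+n) = \dim G\cdot(2(\mathrm{genus}(X)-1)+n) + \dim Z(\mathfrak{g}),
$$
as claimed. The only genuinely nontrivial input is the stability-based computation of $\mathbb{H}^0$; everything else is Serre duality plus Riemann--Roch.
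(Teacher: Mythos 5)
Your proposal is correct and follows essentially the same route as the paper: stability gives $\mathbb{H}^0(\mathcal{C}'_\bullet)=Z(\mathfrak{g})$, Serre duality via $\widehat{\sigma}$ plus the fact that a nonzero central constant section cannot vanish on the nonempty divisor $D$ gives $\mathbb{H}^2(\mathcal{C}'_\bullet)=0$, and the Euler characteristic of the complex with Riemann--Roch gives $\dim\mathbb{H}^1$. The only slip is cosmetic: the target of the Serre-dual map should be $H^0(X,\mathrm{ad}(E_G)\otimes K_X)$ rather than $H^0(X,\mathrm{ad}(E_G))$, which does not affect the identification of its kernel.
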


\begin{proof}
Consider the short exact sequence of complexes of sheaves
$$
\begin{matrix}
& & 0 & & 0\\
& & \Big\downarrow & & \Big\downarrow \\ 
& & 0 & {\longrightarrow} & \text{ad}(E_G)\otimes K_X(D)\\
&& \Big\downarrow && \,\,\,\,\,\,\,\Big\downarrow=\\
{\mathcal C}'_{\bullet} & : & \text{ad}(E_G) & \stackrel{f_\theta}{\longrightarrow} &
\text{ad}(E_G)\otimes K_X(D)\\
&& \,\,\,\,\,\,\,\Big\downarrow= && \Big\downarrow\\
&& \text{ad}(E_G) & \longrightarrow & 0\\
& & \Big\downarrow & & \Big\downarrow \\
& & 0 & & 0
\end{matrix}
$$
on $X$. Let
\begin{equation}\label{les1}
0\, \longrightarrow\, {\mathbb H}^0({\mathcal C}'_{\bullet}) \, \longrightarrow\, H^0(X,\, \text{ad}(E_G))
\, \longrightarrow\, H^0(X,\, \text{ad}(E_G)\otimes K_X(D)) \, \longrightarrow\,
{\mathbb H}^1({\mathcal C}'_{\bullet}) 
\end{equation}
$$
\, \longrightarrow\, H^1(X,\, \text{ad}(E_G)) \, \stackrel{\varpi}{\longrightarrow}\, H^1(X,\,
\text{ad}(E_G)\otimes K_X(D))
\, \longrightarrow\, {\mathbb H}^2({\mathcal C}'_{\bullet})\, \longrightarrow\,0
$$
be the long exact sequence of hypercohomologies associated it. First note that
the trivial holomorphic vector bundle $X\times Z({\mathfrak g})$ over $X$ is a holomorphic
subbundle of $\text{ad}(E_G)$, because the adjoint action of $G$ on $\mathfrak g$ fixes
$Z({\mathfrak g})$ pointwise. The stability condition of $(E_G,\, \theta)$ implies that
\begin{equation}\label{sc}
\{v\, \in\, H^0(X,\, {\rm ad}(E_G))\, \mid\,
[\theta,\, v]\,=\, 0\}\,=\, H^0(X,\, X\times Z({\mathfrak g}))\,=\, Z({\mathfrak g})\, .
\end{equation}
On the other hand, from \eqref{les1} it follows that $${\mathbb H}^0({\mathcal C}'_{\bullet})
\,=\, \{v\, \in\, H^0(X,\, {\rm ad}(E_G))\, \mid\, [\theta,\, v]\,=\, 0\}\, .$$ Hence, we have
that ${\mathbb H}^0({\mathcal C}'_{\bullet})\,=\, Z({\mathfrak g})$.

Next note that from \eqref{sc} it follows that
\begin{equation}\label{sc2}
\{v\, \in\, H^0(X,\, {\rm ad}(E_G)\otimes{\mathcal O}_X(-D))\, \mid\,
[\theta,\, v]\,=\, 0\}\,=\, 0\, .
\end{equation}
The nondegenerate symmetric bilinear form $\widehat{\sigma}$ in \eqref{e6} identifies the holomorphic
vector bundle $\text{ad}(E_G)$ with its dual $\text{ad}(E_G)^*$. Hence
Serre duality gives that $$H^1(X,\, \text{ad}(E_G))\,=\, H^0(X,\, \text{ad}(E_G)\otimes K_X)^*$$
and $H^1(X,\, \text{ad}(E_G)\otimes K_X(D))\,=\, H^0(X,\,\text{ad}(E_G)\otimes
{\mathcal O}_X(-D))^*$. Using these isomorphisms, the homomorphism $\varpi$ in \eqref{les1}
coincides with the dual of the homomorphism
\begin{equation}\label{ws}
H^0(X,\,\text{ad}(E_G)\otimes
{\mathcal O}_X(-D))\, \longrightarrow\, H^0(X,\, \text{ad}(E_G)\otimes K_X),\, \ \
v\, \longmapsto\, [\theta,\, v]\, .
\end{equation}
Therefore, the homomorphism in \eqref{ws} will be denoted by $\varpi^*$.
{}From \eqref{sc2} it now follows that $\varpi^*$ in \eqref{ws}
is injective. Hence its dual $\varpi$ is surjective.
Consequently, from \eqref{les1} we now conclude that
${\mathbb H}^2({\mathcal C}'_{\bullet})\,=\, 0$.

{}From \eqref{les1} it follows immediately that
$$
\dim {\mathbb H}^1({\mathcal C}'_{\bullet})\,=\, \chi(\text{ad}(E_G)\otimes K_X(D))
-\chi(\text{ad}(E_G))+\dim {\mathbb H}^0({\mathcal C}'_{\bullet})
+\dim {\mathbb H}^2({\mathcal C}'_{\bullet})\, ,
$$
where $\chi(F)\,:=\, \dim H^0(X,\, F)- \dim H^1(X,\, F)$ is the Euler characteristic.
By Riemann--Roch, we have
$\chi(\text{ad}(E_G))\,=\,\dim G\cdot (1-\text{genus}(X))$, and
$$
\chi(\text{ad}(E_G)\otimes K_X(D))\,=\, -\chi(\text{ad}(E_G)\otimes{\mathcal O}_X(-D))
\,=\, \dim G\cdot (\text{genus}(X)-1+n)\, .
$$
Consequently, from the above
computations of ${\mathbb H}^0({\mathcal C}'_{\bullet})$ and ${\mathbb H}^2({\mathcal C}'_{\bullet})$
it follows that $\dim {\mathbb H}^1({\mathcal C}'_{\bullet})
\,=\,\dim G\cdot (2\cdot ({\rm genus}(X)-1) +n)+\dim Z({\mathfrak g})$.
\end{proof}

\begin{remark}\label{remn1}
For a stable $D$--twisted $G$--Higgs bundle
$(E_G,\, \theta)$, since ${\mathbb H}^2({\mathcal C}'_{\bullet})\,=\, 0$, the
deformations of $(E_G,\, \theta)$ are unobstructed.
\end{remark}

We shall describe the space of all infinitesimal deformations of a framed $G$--Higgs bundle.
For that, we first consider the special case where $H_x\, =\, \{e\}$ for every $x\, \in\, D$.

Consider the following $2$-term sub-complex of the complex ${\mathcal C}'_{\bullet}$ in \eqref{e10}:
\begin{equation}\label{e11}
{\mathcal C}_{\bullet} \,:\ \
{\mathcal C}_{0}\,=\, \text{ad}(E_G)(-D)\, :=\, \text{ad}(E_G)\otimes {\mathcal O}_X(-D)
\, \stackrel{f_\theta}{\longrightarrow}\,{\mathcal C}_{1}\,=\, \text{ad}(E_G)\otimes K_X(D)
\end{equation}
(here the restriction of the homomorphism $f_\theta$ to $\text{ad}(E_G)(-D) \,\subset\,
\text{ad}(E_G)$ is also denoted by $f_\theta$). Let $\phi$ be a framing on $E_G$. Since $H_x\,=\,
e$ for all $x\, \in\, D$, we have that ${\mathcal H}_x\,=\, 0$, which implies that
$\text{ad}^n_\phi(E_G)\,=\, \text{ad}(E_G)$. Consequently, the triple $(E_G,\, \phi,\,
\theta)$ is a framed $G$--Higgs bundle.

\begin{lemma}\label{lemid}
Assume that $H_x\, =\, \{e\}$ for every $x\, \in\, D$.
The infinitesimal deformations of the framed $G$--Higgs bundle $(E_G,\, \phi,\, \theta)$ are
parametrized by elements
of the first hypercohomology ${\mathbb H}^1({\mathcal C}_{\bullet})$, where ${\mathcal C}_{\bullet}$
is the complex in \eqref{e11}.
\end{lemma}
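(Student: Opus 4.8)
The plan is to mimic, in the framed setting, the standard Čech/hypercohomology description of deformations of Higgs bundles, combining Lemma \ref{lem-3} (which handles the framed bundle alone) with Lemma \ref{lem-1} (which handles the $D$-twisted Higgs structure alone). Since $H_x = \{e\}$ forces ${\mathcal H}_x = 0$ and hence $\text{ad}_\phi(E_G) = \text{ad}(E_G)(-D)$, the complex ${\mathcal C}_\bullet$ in \eqref{e11} is precisely the complex whose degree-zero term is the sheaf $\text{ad}_\phi(E_G)$ governing deformations of $(E_G,\phi)$ (by Lemma \ref{lem-3}(1)), equipped with the differential $f_\theta = [\theta,\,-\,]$. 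So the statement is the Higgs-field refinement of Lemma \ref{lem-3}(1), exactly as Lemma \ref{lem-1} is the refinement of the classical fact that $H^1(X,\text{ad}(E_G))$ parametrizes deformations of $E_G$.

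The key steps are as follows. First, fix a trivializing open cover $\{U_\alpha\}$ of $X$ on which $E_G$ is trivial and relative to which the framing $\phi$ and the Higgs field $\theta$ are given by local data; an infinitesimal deformation of the framed $G$-Higgs bundle over $X\times \operatorname{Spec}{\mathbb C}[\epsilon]/(\epsilon^2)$ is, by definition, a deformation of the three pieces of data together — the gluing cocycle of $E_G$, the framing $\phi$ at the points of $D$, and the section $\theta$ — compatible with all structure. Second, unwind what "deformation of the framing" means when $H_x=\{e\}$: as recorded in the discussion preceding Lemma \ref{lem-3} and in the $H_x=\{e\}$ case of Lemma \ref{lem-3}(3), deforming $E_G$ while keeping a framing amounts to working with $\text{ad}(E_G)(-D)$-valued Čech cochains rather than $\text{ad}(E_G)$-valued ones, the twist by ${\mathcal O}_X(-D)$ encoding the constraint that the deformation of the bundle be trivial along $D$ in the trivialization determined by $\phi$. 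Third, impose the additional condition that the pair $(E_G,\phi,\theta)$ deform as a framed Higgs bundle: the deformation $\theta + \epsilon\, \theta'$ of the Higgs field must remain a section of $\text{ad}(E_G)\otimes K_X(D)$ for the deformed bundle, and the compatibility of the deformed $\theta'$ with the deformed gluing data yields, on each overlap $U_\alpha\cap U_\beta$, precisely the $1$-cocycle condition for the $2$-term complex ${\mathcal C}_\bullet$: a deformation is a pair $(\{\eta_{\alpha\beta}\},\{\theta'_\alpha\})$ with $\eta_{\alpha\beta}\in \Gamma(U_\alpha\cap U_\beta,\text{ad}(E_G)(-D))$ a Čech $1$-cocycle for the bundle-plus-framing, $\theta'_\alpha\in\Gamma(U_\alpha,\text{ad}(E_G)\otimes K_X(D))$, and $\theta'_\alpha - \theta'_\beta = f_\theta(\eta_{\alpha\beta}) = [\theta,\eta_{\alpha\beta}]$ on overlaps; trivial deformations (pullbacks from $\operatorname{Spec}{\mathbb C}$ via an automorphism infinitesimally close to the identity that preserves the framing) are exactly the coboundaries, namely pairs of the form $(\{s_\alpha - s_\beta\},\{f_\theta(s_\alpha)\})$ with $s_\alpha\in\Gamma(U_\alpha,\text{ad}(E_G)(-D))$. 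Fourth, observe that this is the Čech description of ${\mathbb H}^1({\mathcal C}_\bullet)$, so the quotient of deformations by trivial ones is ${\mathbb H}^1({\mathcal C}_\bullet)$, completing the proof.

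Most of this is routine bookkeeping — the only genuinely delicate point, which I expect to be the main obstacle, is verifying cleanly that the $(-D)$-twist on the degree-zero term ${\mathcal C}_0 = \text{ad}(E_G)(-D)$ is the correct incarnation of the framing constraint, i.e. that "preserving $\phi$ to first order" is equivalent to "the deformation $1$-cocycle lies in $\text{ad}(E_G)(-D)$ rather than $\text{ad}(E_G)$" — and that the Higgs differential $f_\theta$ restricts consistently to this subsheaf, mapping $\text{ad}(E_G)(-D)$ into $\text{ad}(E_G)\otimes K_X(D)$ (which it does, since $[\theta,\,-\,]$ is ${\mathcal O}_X$-linear and tensoring by ${\mathcal O}_X(-D)$ is harmless on the target, as $\text{ad}(E_G)(-D)\otimes K_X(D) \hookrightarrow \text{ad}(E_G)\otimes K_X(D)$). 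This is already implicit in Lemma \ref{lem-3} together with Lemma \ref{lem-1}, so in the write-up I would simply say that the argument combines the proofs of those two lemmas, pointing to the compatibility of the two sheaf-level constructions, and omit the repetitive cochain-level details (as was done in the proof of Lemma \ref{lem-3}).
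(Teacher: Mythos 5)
Your proposal is correct and follows the same route the paper takes: the paper's proof simply states that the argument for Lemma \ref{lem-1} carries over with minor modifications, and your \v{C}ech-cochain description (cocycles valued in $\mathrm{ad}(E_G)(-D)$ encoding the bundle-plus-framing, local deformations of $\theta$ glued via $f_\theta$, coboundaries from framing-preserving infinitesimal automorphisms) is precisely the content of those modifications. The identification $\mathrm{ad}_\phi(E_G)=\mathrm{ad}(E_G)(-D)$ when $H_x=\{e\}$ and the compatibility of $f_\theta$ with the twist are handled correctly, so nothing is missing.
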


\begin{proof}
The proof of Lemma \ref{lem-1} also works for this lemma after some very minor
and straight-forward modifications. (In the special case where $G\,=\, \text{GL}(r,{\mathbb C})$,
this lemma reduces to Lemma 2.7 of \cite{BLP}.)
\end{proof}

We have the following short exact sequence of complexes of sheaves on $X$
$$
\begin{matrix}
0 && 0 && 0\\
\Big\downarrow && \Big\downarrow && \Big\downarrow\\
{\mathcal C}_{\bullet} & : & \text{ad}(E_G)(-D) & \stackrel{f_\theta}{\longrightarrow} &
\text{ad}(E_G)\otimes K_X(D)\\
\Big\downarrow && \Big\downarrow && \,\,\,\,\,\,\,\Big\downarrow =\\
{\mathcal C}'_{\bullet} & : & \text{ad}(E_G) & \stackrel{f_\theta}{\longrightarrow} &
\text{ad}(E_G)\otimes K_X(D)\\
\Big\downarrow && \Big\downarrow && \Big\downarrow\\
{\mathcal C}''_{\bullet} &: & {\mathcal C}''_0\,=\, \text{ad}(E_G)\vert_D & \longrightarrow & 0\\
\Big\downarrow && \Big\downarrow && \Big\downarrow\\
0 && 0 && 0
\end{matrix}
$$
where ${\mathcal C}''_{\bullet}$ is a $1$-term complex, and
${\mathcal C}'_{\bullet}$ is defined in \eqref{e10}. Let
\begin{equation}\label{ahb}
\longrightarrow\,
{\mathbb H}^0(X,\, {\mathcal C}''_{\bullet})\,=\, H^0(X,\, \text{ad}(E_G)\vert_D)\,
\stackrel{\beta'_1}{\longrightarrow}\,
{\mathbb H}^1(X,\, {\mathcal C}_{\bullet})\,
\stackrel{\beta'_2}{\longrightarrow}\, {\mathbb H}^1(X,\, {\mathcal C}'_{\bullet}) 
\, \longrightarrow\, {\mathbb H}^1(X,\, {\mathcal C}''_{\bullet})\,=\, 0
\end{equation}
be the long exact sequence of hypercohomologies associated to this short exact sequence of complexes;
note that we have ${\mathbb H}^1(X,\, {\mathcal C}''_{\bullet})\,=\, 0$ because ${\mathcal C}''_0$ is a
torsion sheaf supported on points. The homomorphism
$\beta'_2$ in \eqref{ahb} coincides with the homomorphism of infinitesimal deformations 
corresponding to the forgetful map that sends any
framed $G$--Higgs bundle $(E'_G,\, \phi',\, \theta')$ to the
$D$--twisted $G$--Higgs bundle $(E'_G,\, \theta')$ by
forgetting the framing; see Lemma \ref{lem-1} and Lemma \ref{lemid}
(we have $H_x\, =\, \{e\}$ for every $x\, \in\, D$). The homomorphism
$\beta'_1$ in \eqref{ahb} corresponds to moving just the framing while keeping the
$D$--twisted $G$--Higgs bundle $(E_G,\, \theta)$ fixed.

Now we consider framings of general type. The subgroups $H_x\, \subsetneq\, G$, $x\, \in
\, D$, in \eqref{hx} are no longer assumed to be trivial.

Let $(E_G,\, \phi,\, \theta)$ be a framed $G$--Higgs bundle. Consider the subspace
${\mathcal H}^\perp_x\, \subset\,\text{ad}(E_G)_x$ in \eqref{f1}. Let $\text{ad}^n_\phi(E_G)$ be
the holomorphic vector bundle on $X$ defined by the following short exact sequence of coherent
analytic sheaves on $X$:
\begin{equation}\label{e12}
0\, \longrightarrow\, \text{ad}^n_\phi(E_G)\, \longrightarrow\, \text{ad}(E_G) \, \longrightarrow\,
\bigoplus_{x\in D} \text{ad}(E_G)_x/{\mathcal H}^\perp_x\, \longrightarrow\, 0\, ,
\end{equation}
where $\text{ad}(E_G)_x/{\mathcal H}^\perp_x$ is supported at $x$. From
\eqref{e12} it follows immediately that the holomorphic sections of $\text{ad}^n_\phi(E_G)$ are precisely
the sections $s\, \in\, H^0(X,\, \text{ad}(E_G))$ such that $s(x)\, \in\, {\mathcal H}^\perp_x$ for every $x\, \in\, D$.
Hence from the definition of Higgs fields on $(E_G,\, \phi)$ it follows that
Higgs fields on $(E_G,\, \phi)$ are
precisely the holomorphic sections of the holomorphic vector
bundle $\text{ad}^n_\phi(E_G)\otimes K_X(D)$.

\begin{lemma}\label{lem1}
The homomorphism $f_\theta$ in \eqref{e10} sends the subsheaf ${\rm ad}_\phi(E_G)\, \subset\,
{\rm ad}(E_G)$ constructed in \eqref{e9a} to the subsheaf ${\rm ad}^n_\phi(E_G)\otimes K_X(D)\, \subset\,
{\rm ad}(E_G)\otimes K_X(D)$, where ${\rm ad}^n_\phi(E_G)$ is constructed in \eqref{e12}.
\end{lemma}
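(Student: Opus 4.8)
The plan is to reduce the statement to a pointwise (indeed, fiberwise) computation at each $x\,\in\, D$, since away from $D$ the map $f_\theta\,=\,[\theta,\,-\,]$ certainly carries $\text{ad}_\phi(E_G)$ into $\text{ad}(E_G)\otimes K_X(D)$, and both $\text{ad}_\phi(E_G)$ and $\text{ad}^n_\phi(E_G)\otimes K_X(D)$ agree with $\text{ad}(E_G)$, respectively $\text{ad}(E_G)\otimes K_X(D)$, on $X\setminus D$. So the only thing to check is that, near a point $x\,\in\, D$, a local section $s$ of $\text{ad}(E_G)$ with $s(x)\,\in\,{\mathcal H}_x$ gets sent by $f_\theta$ to a local section $t\,=\,[\theta,\,s]$ of $\text{ad}(E_G)\otimes K_X(D)$ with $t(x)\,\in\,{\mathcal H}^\perp_x$. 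Here I am using the descriptions \eqref{e9a} of $\text{ad}_\phi(E_G)$ (a section lies in it iff its value at each $x$ lies in ${\mathcal H}_x$) and \eqref{e12} of $\text{ad}^n_\phi(E_G)$ (a section lies in it iff its value at each $x$ lies in ${\mathcal H}^\perp_x$). Using the canonical trivialization $K_X(D)_x\,=\,\mathbb C$ from \eqref{res}, evaluation at $x$ identifies the fiber $(\text{ad}(E_G)\otimes K_X(D))_x$ with $\text{ad}(E_G)_x$, so the claim really is: $[\theta(x),\,s(x)]\,\in\,{\mathcal H}^\perp_x$ whenever $s(x)\,\in\,{\mathcal H}_x$.

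First I would fix $x\,\in\, D$ and identify $\text{ad}(E_G)_x$ with $\mathfrak g$ via a frame adapted to $\phi$, i.e. via a point of $E^x_{H_x}\,=\,q_x^{-1}(\phi(x))$ as in Remark \ref{remH}; under such an identification ${\mathcal H}_x$ becomes ${\mathfrak h}_x\,\subset\,\mathfrak g$, the bilinear form $\widehat\sigma(x)$ becomes $\sigma$, and ${\mathcal H}^\perp_x$ becomes ${\mathfrak h}_x^{\perp_\sigma}$. We are given that $\theta$ is a Higgs field on $(E_G,\,\phi)$, which by definition means $\theta(x)\,\in\,{\mathcal H}^\perp_x$, i.e. $\sigma(\theta(x),\,h)\,=\,0$ for all $h\,\in\,{\mathfrak h}_x$. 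Now for $s(x)\,\in\,{\mathfrak h}_x$ and arbitrary $h\,\in\,{\mathfrak h}_x$, $\sigma$-invariance of the bracket gives
\[
\sigma\bigl([\theta(x),\,s(x)],\,h\bigr)\,=\,-\,\sigma\bigl(\theta(x),\,[s(x),\,h]\bigr)\,=\,0,
\]
because $[s(x),\,h]\,\in\,{\mathfrak h}_x$ (as ${\mathfrak h}_x$ is a subalgebra) and $\theta(x)\,\in\,{\mathfrak h}_x^{\perp_\sigma}$. Hence $[\theta(x),\,s(x)]\,\in\,{\mathfrak h}_x^{\perp_\sigma}\,=\,{\mathcal H}^\perp_x$, which is exactly what is needed.

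The only genuinely delicate point, and the one I would write out carefully, is the bookkeeping that turns the fiberwise statement into a statement about subsheaves: one must check that "$f_\theta$ sends $\text{ad}_\phi(E_G)$ into $\text{ad}^n_\phi(E_G)\otimes K_X(D)$" is equivalent to the assertion that $f_\theta(s)$ has the correct value at every $x\,\in\, D$ for every local section $s$ of $\text{ad}_\phi(E_G)$ — that is, that the subsheaves in \eqref{e9a} and \eqref{e12} are indeed cut out by the value-at-$x$ conditions, with no hidden higher-order condition coming from the twist by ${\mathcal O}_X(D)$. This is precisely what the sequences \eqref{e9a} and \eqref{e12} say (the quotient sheaves there are the skyscrapers $\text{ad}(E_G)_x/{\mathcal H}_x$ and $\text{ad}(E_G)_x/{\mathcal H}^\perp_x$, supported at the reduced points of $D$), so once the fiberwise computation above is in hand, the conclusion follows; I would record this and omit the routine local verification.
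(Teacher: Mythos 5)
Your proposal is correct and follows essentially the same route as the paper: both arguments reduce the lemma to the fiberwise statement that $[{\mathcal H}_x,\,{\mathcal H}^\perp_x]\,\subset\,{\mathcal H}^\perp_x$, proved via the $G$--invariance identity $\sigma([a,\,c]\otimes b)+\sigma(c\otimes[a,\,b])\,=\,0$ applied with $\theta(x)\,\in\,{\mathcal H}^\perp_x$ and $s(x),\,h\,\in\,{\mathcal H}_x$ (the paper packages this as the general fact $[S,\,S^\perp]\,\subset\,S^\perp$ for a Lie subalgebra $S$). The only quibble is a harmless sign in your middle equality --- invariance gives $\sigma([\theta(x),\,s(x)],\,h)\,=\,\sigma(\theta(x),\,[s(x),\,h])$ rather than its negative --- which does not affect the conclusion since the right-hand side vanishes either way.
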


\begin{proof}
Let $S$ be a Lie subalgebra of $\mathfrak g$. Let $S^\perp$ be the annihilator of it for the symmetric
bilinear form $\sigma$ in \eqref{e5}. Then it can be shown that
\begin{equation}\label{li}
[S,\, S^\perp]\, \subset\, S^\perp\, .
\end{equation}
Indeed, the $G$--invariance condition on $\sigma$ implies that
\begin{equation}\label{gi}
\sigma([a,\, c]\otimes b)\,+\, \sigma(c\otimes [a,\, b])\,=\, 0
\end{equation}
for all $a,\, b,\, c\, \in\, \mathfrak g$. In particular, for any $a,\, b\, \in\, S$ and $c\, \in\, S^\perp$,
$$
\sigma([a,\, c]\otimes b) \,=\,-\, \sigma(c\otimes [a,\, b])\,=\, 0\, ,
$$
because $[b,\, a]\, \in\, S$ and $c\, \in\, S^\perp$.

For any $x\, \in\, D$, the image of the homomorphism ${\rm ad}_\phi(E_G)_x\, \longrightarrow\,
{\rm ad}(E_G)_x$ in \eqref{e9a} is ${\mathcal H}_x$, while the image of the homomorphism
${\rm ad}^n_\phi(E_G)_x\, \longrightarrow\,
{\rm ad}(E_G)_x$ in \eqref{e12} is ${\mathcal H}^\perp_x$. From \eqref{li} we know
that
\begin{equation}\label{e122}
[{\mathcal H}_x,\, {\mathcal H}^\perp_x]\, \subset\, {\mathcal H}^\perp_x\, .
\end{equation}
Since $\theta$ is a holomorphic section of $\text{ad}^n_\phi(E_G)\otimes K_X(D)$,
the lemma follows from \eqref{e122}.
\end{proof}

The restriction of $f_\theta$ (defined in \eqref{e10}) to ${\rm ad}_\phi(E_G)\, \subset\,
{\rm ad}(E_G)$ will be denoted by $f^0_\theta$. Let ${\mathcal D}_{\bullet}$ be
the following $2$-term sub-complex of ${\mathcal C}'_{\bullet}$ constructed in \eqref{e10}:
\begin{equation}\label{e13}
{\mathcal D}_{\bullet} \,:\ \
{\mathcal D}_{0}\,=\, \text{ad}_\phi(E_G) \, \stackrel{f^0_\theta}{\longrightarrow}\,
{\mathcal D}_{1}\,=\, \text{ad}^n_\phi(E_G)\otimes K_X(D)
\end{equation}
(Lemma \ref{lem1} shows that $f_\theta({\mathcal D}_{0})\, \subset\, {\mathcal D}_{1}$).

\begin{lemma}\label{lem-2}
All the infinitesimal deformations of the given framed $G$--Higgs bundle $(E_G,\, \phi,\, \theta)$ are
parametrized by the elements of the first hypercohomology ${\mathbb H}^1({\mathcal D}_{\bullet})$, where
${\mathcal D}_{\bullet}$ is constructed in \eqref{e13}.
\end{lemma}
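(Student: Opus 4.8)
The plan is to follow, almost verbatim, the \v Cech--hypercohomology arguments used for Lemma \ref{lem-1} and Lemma \ref{lemid} (and the references \cite{BR,Bot,Ma,Bi} cited there), incorporating the constraint on transition functions coming from Lemma \ref{lem-3}. The guiding principle is the standard one: infinitesimal deformations of a collection of data glued together are classified by the first hypercohomology of the complex whose degree-$k$ term records the $k$-fold overlap obstructions, and here the data are (i) the principal bundle $E_G$, (ii) the framing $\phi$, and (iii) the Higgs field $\theta$, subject to $\theta$ being a section of $\text{ad}^n_\phi(E_G)\otimes K_X(D)$.

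Concretely, I would fix a sufficiently fine open cover $\{U_i\}$ of $X$ over which $E_G$ is trivialized, with $D$ meeting each $U_i$ in at most one point. By Lemma \ref{lem-3}(1), an infinitesimal deformation of the framed bundle $(E_G,\,\phi)$ is a \v Cech $1$-cocycle $\{s_{ij}\}$ with $s_{ij}\,\in\,\Gamma(U_i\cap U_j,\,\text{ad}_\phi(E_G))$, the point being that the $s_{ij}$ deform the transition cocycle of $E_G$ in a way that simultaneously deforms $\phi$, so they lie in $\text{ad}_\phi(E_G)$ rather than in all of $\text{ad}(E_G)$. A compatible deformation of $\theta$ is then recorded by local sections $t_i\,\in\,\Gamma(U_i,\,\text{ad}(E_G)\otimes K_X(D))$ with $t_j-t_i\,=\,[\theta,\,s_{ij}]\,=\,f_\theta(s_{ij})$ on overlaps, exactly as in the proof of Lemma \ref{lem-1}; the pair $(\{s_{ij}\},\,\{t_i\})$ is well defined up to the coboundary relation $s_{ij}\,\mapsto\, s_{ij}+g_j-g_i$, $t_i\,\mapsto\, t_i+f_\theta(g_i)$ for $\{g_i\}\,\in\, C^0(\{U_i\},\,\text{ad}_\phi(E_G))$, which is precisely the equivalence of deformations.

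The new ingredient is the fiberwise constraint at the points of $D$: the deformed Higgs field must still be a Higgs field for the deformed framing, i.e.\ its value at each $x\,\in\, D$ must lie in the annihilator $\mathcal{H}^\perp_x$ computed with respect to the \emph{deformed} framing. Using the description of $\mathcal{H}_x$ as $\text{ad}(E^x_{H_x})$ from Remark \ref{remH}, deforming $\phi$ conjugates the reduction $E^x_{H_x}$, hence $\mathcal{H}_x$ and $\mathcal{H}^\perp_x$, by the element of $\text{ad}(E_G)_x/\mathcal{H}_x$ encoding the framing deformation; expanding the condition ``$\theta_\epsilon(x)$ lies in the annihilator for the deformed framing'' to first order in $\epsilon$ and using $[\mathcal{H}_x,\,\mathcal{H}^\perp_x]\,\subset\,\mathcal{H}^\perp_x$ from \eqref{e122}, this unwinds to the requirement that, in the chosen trivializations, each $t_i$ takes values in $\text{ad}^n_\phi(E_G)\otimes K_X(D)$ at the point of $D\cap U_i$, i.e.\ $t_i\,\in\,\Gamma(U_i,\,\text{ad}^n_\phi(E_G)\otimes K_X(D))$. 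By Lemma \ref{lem1} the cocycle relation $t_j-t_i\,=\,f_\theta(s_{ij})\,=\,f^0_\theta(s_{ij})$ is consistent with this, since $f^0_\theta$ maps $\text{ad}_\phi(E_G)$ into $\text{ad}^n_\phi(E_G)\otimes K_X(D)$. Thus a pair $(\{s_{ij}\},\,\{t_i\})$ as above is exactly a \v Cech $1$-hypercocycle for the complex $\mathcal{D}_\bullet$ of \eqref{e13}, the equivalences match, and infinitesimal deformations of $(E_G,\,\phi,\,\theta)$ are parametrized by $\mathbb{H}^1(\mathcal{D}_\bullet)$.

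The main obstacle is precisely this last bookkeeping step: verifying carefully that the moving-framing condition ``$\theta_\epsilon(x)$ lies in the annihilator for $\phi_\epsilon$'' is equivalent, after passing to the fixed trivialization used to set up the \v Cech data, to the fixed-framing membership $t_i(x)\,\in\,\mathcal{H}^\perp_x$. Everything else --- the gluing of local Higgs-field deformations, the identification of coboundaries with trivial deformations, and the vanishing of higher \v Cech terms --- is a routine transcription of the arguments for Lemma \ref{lem-1}, Lemma \ref{lemid} and Lemma \ref{lem-3}, so I would state those parts briefly and concentrate the proof on the first-order analysis at the points of $D$.
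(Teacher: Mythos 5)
Your proposal is correct and follows essentially the route the paper takes: the paper's proof of Lemma \ref{lem-2} simply asserts that the argument for Lemma \ref{lem-1} carries over ``after the framing is suitably taken into account'' and omits the details, and your \v Cech hypercocycle computation --- with $\{s_{ij}\}$ valued in ${\rm ad}_\phi(E_G)$, $\{t_i\}$ valued in ${\rm ad}^n_\phi(E_G)\otimes K_X(D)$, the compatibility $t_j-t_i=f^0_\theta(s_{ij})$, and the first-order analysis at $D$ resting on Lemma \ref{lem1} and \eqref{e122} --- is precisely the standard filling-in of those details. You have also correctly isolated the only genuinely new point beyond Lemmas \ref{lem-1} and \ref{lemid}, namely that the annihilator condition for the \emph{deformed} framing reduces, in trivializations adapted to the framing, to membership of $t_i(x)$ in ${\mathcal H}^\perp_x$.
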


\begin{proof}
Just as the proof of Lemma \ref{lem-1} also works for Lemma \ref{lemid}, it works even for this lemma
after the framing is suitably taken into account. We omit the details. It should be mentioned that this
lemma can also be proved using the framework of Section \ref{se6}.
\end{proof}

We have the following short exact sequence of complexes of sheaves on $X$
\begin{equation}\label{des}
\begin{matrix}
0 && 0 && 0\\
\Big\downarrow && \Big\downarrow && \Big\downarrow\\
{\mathcal D}'_{\bullet} & : & 0 & \longrightarrow & {\mathcal D}'_{1}\,=\,
\text{ad}^n_\phi(E_G)\otimes K_X(D)\\
\Big\downarrow &&\Big\downarrow && \,\,\,\,\,\,\,\Big\downarrow =\\
{\mathcal D}_{\bullet} & : & \text{ad}_\phi(E_G) & \stackrel{f^0_\theta}{\longrightarrow} &
\text{ad}^n_\phi(E_G)\otimes K_X(D)\\
\Big\downarrow && \,\,\,\,\,\,\, \Big\downarrow = && \Big\downarrow\\
{\mathcal D}''_{\bullet} & : & {\mathcal D}''_0\,=\, \text{ad}_\phi(E_G) & \longrightarrow & 0\\
\Big\downarrow && \Big\downarrow && \Big\downarrow\\
0 && 0 && 0
\end{matrix}
\end{equation}
(both ${\mathcal D}'_{\bullet}$ and ${\mathcal D}''_{\bullet}$ are $1$-term complexes concentrated
at the first position and the $0$-th position respectively). Let
\begin{equation}\label{g3}
\longrightarrow\, {\mathbb H}^0({\mathcal D}''_{\bullet})\,=\, H^0(X,\, \text{ad}_\phi(E_G))
\, \longrightarrow\,
{\mathbb H}^1({\mathcal D}'_{\bullet})\,=\, H^0(X,\, \text{ad}^n_\phi(E_G)\otimes K_X(D))\,
\longrightarrow
\end{equation}
$$
\stackrel{\beta'_3}{\longrightarrow}
\, {\mathbb H}^1({\mathcal D}_{\bullet}) \, \stackrel{\beta'_4}{\longrightarrow}\,
{\mathbb H}^1({\mathcal D}''_{\bullet})\,=\, H^1(X,\, \text{ad}_\phi(E_G))
$$
be the long exact sequence of hypercohomologies associated to
\eqref{des}. The homomorphism $\beta'_3$ in \eqref{g3} corresponds to deforming the Higgs field 
keeping the framed principal bundle $(E_G,\, \phi)$ fixed; recall that the Higgs fields on
$(E_G,\, \phi)$ are the holomorphic sections of $\text{ad}^n_\phi(E_G)\otimes K_X(D)$.
The homomorphism $\beta'_4$ in \eqref{g3} corresponds
to the forgetful map that sends an infinitesimal deformation of $(E_G,\, \phi,\, \theta)$
to the infinitesimal deformation of $(E_G,\, \phi)$ it gives by simply forgetting the
Higgs field (see Lemma \ref{lem-3}(1)).

The hypercohomologies of ${\mathcal D}_{\bullet}$ will be computed in
Section \ref{se5.1}.

\section{Framed $G$--Higgs bundles and symplectic geometry}

\subsection{Construction of a symplectic structure}

\begin{proposition}\label{prop1}
The dual ${\rm ad}_\phi(E_G)^*$ of the vector bundle ${\rm ad}_\phi(E_G)$ in \eqref{e9a}
is identified with ${\rm ad}^n_\phi(E_G)\otimes {\mathcal O}_X(D)$, where
${\rm ad}^n_\phi(E_G)$ is constructed in \eqref{e12}. This identification is canonical
in the sense that it depends only on $\sigma$ in \eqref{e5}.

The dual vector bundle ${\rm ad}^n_\phi(E_G)^*$ is identified with
${\rm ad}_\phi(E_G)\otimes {\mathcal O}_X(D)$; this identification is canonical in the
above sense.
\end{proposition}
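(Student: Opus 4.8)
The plan is to produce the duality between $\mathrm{ad}_\phi(E_G)$ and $\mathrm{ad}^n_\phi(E_G)\otimes\mathcal{O}_X(D)$ by dualizing the short exact sequences \eqref{e9a} and \eqref{e12} and matching them up via the nondegenerate form $\widehat\sigma$ from \eqref{e6}. Recall that away from $D$ both $\mathrm{ad}_\phi(E_G)$ and $\mathrm{ad}^n_\phi(E_G)$ agree with $\mathrm{ad}(E_G)$, on which $\widehat\sigma$ gives a canonical isomorphism $\mathrm{ad}(E_G)\xrightarrow{\sim}\mathrm{ad}(E_G)^*$; so the content is entirely local at the points $x\in D$, where it amounts to a linear-algebra statement about the fibre $\mathrm{ad}(E_G)_x$ equipped with the nondegenerate symmetric form $\widehat\sigma(x)$, the subspace $\mathcal{H}_x$, and its annihilator $\mathcal{H}^\perp_x$.

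First I would set up the local picture at a point $x\in D$ with a local coordinate $z$ centred at $x$. Writing $V=\mathrm{ad}(E_G)$ locally as a trivial bundle near $x$ with fibre $W:=\mathrm{ad}(E_G)_x$, the sheaf $\mathrm{ad}_\phi(E_G)$ is the subsheaf of sections $s$ with $s(x)\in\mathcal{H}_x$, so locally $\mathrm{ad}_\phi(E_G)$ is spanned by sections valued in $\mathcal{H}_x$ together with $z\cdot W$; equivalently it is $\mathcal{H}_x\oplus z\mathcal{O}\cdot W$ as an $\mathcal{O}$-module, fitting in \eqref{g1}. Dually, $\mathrm{ad}^n_\phi(E_G)$ is the subsheaf of sections $t$ with $t(x)\in\mathcal{H}^\perp_x$, i.e. locally $\mathcal{H}^\perp_x\oplus z\mathcal{O}\cdot W$. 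Then $\mathrm{ad}^n_\phi(E_G)\otimes\mathcal{O}_X(D)$ is locally $\tfrac1z(\mathcal{H}^\perp_x\oplus z\mathcal{O}\cdot W)=\tfrac1z\mathcal{H}^\perp_x\oplus\mathcal{O}\cdot W$. The pairing to check is: for $s\in\mathrm{ad}_\phi(E_G)$ and $t\in\mathrm{ad}^n_\phi(E_G)\otimes\mathcal{O}_X(D)$, the function $\widehat\sigma(s,t)$ — a priori a section of $\mathcal{O}_X(D)$ — is in fact a section of $\mathcal{O}_X$, and the resulting $\mathcal{O}_X$-bilinear pairing $\mathrm{ad}_\phi(E_G)\otimes\mathrm{ad}^n_\phi(E_G)\otimes\mathcal{O}_X(D)\to\mathcal{O}_X$ is a perfect pairing (nondegenerate on each fibre). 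For the first point: $t=\tfrac1z t_{-1}+t_0+\cdots$ with $t_{-1}\in\mathcal{H}^\perp_x$, and $s=s_0+z s_1+\cdots$ with $s_0\in\mathcal{H}_x$, so the potential pole of $\widehat\sigma(s,t)$ is $\tfrac1z\widehat\sigma(x)(s_0,t_{-1})$, which vanishes precisely because $s_0\in\mathcal{H}_x$ and $t_{-1}\in\mathcal{H}^\perp_x=\mathcal{H}_x^{\perp}$. Hence $\widehat\sigma(s,t)\in\mathcal{O}_X$, giving a pairing $\mathrm{ad}_\phi(E_G)\to(\mathrm{ad}^n_\phi(E_G)\otimes\mathcal{O}_X(D))^*$.

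For perfectness I would argue fibrewise at $x$ (elsewhere it is immediate from nondegeneracy of $\widehat\sigma$). The fibre of $\mathrm{ad}_\phi(E_G)$ at $x$ is identified, via \eqref{g1}, with an extension of $\mathcal{H}_x$ by $\mathrm{ad}(E_G)_x/(\text{nothing})$ — more precisely one reads off from the local description that $\mathrm{ad}_\phi(E_G)_x$ sits in $0\to W\to\mathrm{ad}_\phi(E_G)_x\to\mathcal{H}_x\to 0$ (the $W$ coming from the coefficient of $z$, canonically $\mathrm{ad}(E_G)_x\otimes(T_xX)=\mathrm{ad}(E_G)_x\otimes\mathcal{O}_X(D)_x^{-1}$… I will phrase this cleanly using \eqref{res}), and similarly $\mathrm{ad}^n_\phi(E_G)\otimes\mathcal{O}_X(D)$ has fibre at $x$ sitting in $0\to \mathcal{H}^\perp_x\otimes\mathcal{O}_X(D)_x\to(\mathrm{ad}^n_\phi(E_G)\otimes\mathcal{O}_X(D))_x\to W\to 0$. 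The induced pairing respects these two filtrations in opposite order, inducing on the graded pieces the pairings $\mathcal{H}_x\otimes W\to\mathcal{H}_x\otimes(W/\mathcal{H}_x^\perp)^*$ (perfect since $(W/\mathcal{H}_x^\perp)^*\cong\mathcal{H}_x$ via $\widehat\sigma(x)$) and $W\otimes(\mathcal{H}^\perp_x)\to\mathbb{C}$ pairing $W$ with $\mathcal{H}^\perp_x\cong(W/\mathcal{H}_x)^*$, again perfect by nondegeneracy of $\widehat\sigma(x)$; a pairing of filtered spaces that is perfect on associated gradeds is perfect, so $\mathrm{ad}_\phi(E_G)^*\cong\mathrm{ad}^n_\phi(E_G)\otimes\mathcal{O}_X(D)$ canonically. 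The second assertion, $\mathrm{ad}^n_\phi(E_G)^*\cong\mathrm{ad}_\phi(E_G)\otimes\mathcal{O}_X(D)$, follows by tensoring the first identity with $\mathcal{O}_X(-D)$ and taking duals, or symmetrically by the same computation with the roles of $\mathcal{H}_x$ and $\mathcal{H}^\perp_x$ swapped (using $(\mathcal{H}^\perp_x)^\perp=\mathcal{H}_x$, valid since $\widehat\sigma(x)$ is nondegenerate and $\mathcal{H}_x$ finite-dimensional). Canonicity in $\sigma$ is clear since the only choice entering is $\widehat\sigma$, hence $\sigma$.

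The main obstacle, and the step deserving care, is the fibrewise perfectness at $x$: one must track precisely how the fibres of $\mathrm{ad}_\phi(E_G)$ and of $\mathrm{ad}^n_\phi(E_G)\otimes\mathcal{O}_X(D)$ decompose — these are \emph{not} simply $\mathcal{H}_x\oplus(\text{something})$ but genuine extensions, and the "extra" summand $W$ carries a twist by $T_xX$ (equivalently by $\mathcal{O}_X(D)_x^{-1}$) that must cancel correctly against the twist in $\mathcal{O}_X(D)$ for the pairing to land in $\mathbb{C}$ and be perfect. Once the bookkeeping via \eqref{res} is done honestly, perfectness on the two graded pieces reduces to the two elementary facts $(W/\mathcal{H}_x^\perp)^*\cong\mathcal{H}_x$ and $(W/\mathcal{H}_x)^*\cong\mathcal{H}_x^\perp$ coming from nondegeneracy of $\widehat\sigma(x)$, and the filtered-to-graded principle finishes it.
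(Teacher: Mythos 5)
Your argument is correct in outline but reaches the conclusion by a genuinely different route than the paper. Both proofs begin identically: the pairing $\widehat\sigma\otimes\mathrm{Id}_{\mathcal{O}_X(D)}$ restricted to ${\rm ad}_\phi(E_G)\otimes{\rm ad}^n_\phi(E_G)\otimes\mathcal{O}_X(D)$ lands in $\mathcal{O}_X$ because $\mathcal{H}_x$ annihilates $\mathcal{H}^\perp_x$, yielding a sheaf map $S\colon{\rm ad}^n_\phi(E_G)\otimes\mathcal{O}_X(D)\to{\rm ad}_\phi(E_G)^*$ that is an isomorphism off $D$. Where the paper then finishes by a global degree count --- the cokernel of $S$ is a torsion sheaf, and $\dim\mathcal{H}_x+\dim\mathcal{H}^\perp_x=\dim\mathfrak{g}$ forces its degree, hence the sheaf itself, to vanish --- you instead prove fibrewise perfectness at each $x\in D$ by filtering the two fibres and checking the pairing on associated gradeds. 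That local analysis is more work but is self-contained and exhibits exactly which linear-algebra dualities ($(W/\mathcal{H}_x^\perp)^*\cong\mathcal{H}_x$ and $(W/\mathcal{H}_x)^*\cong\mathcal{H}_x^\perp$) drive the statement; the paper's degree argument is shorter and sidesteps the bookkeeping entirely. Your derivation of the second assertion from the first by dualizing matches the paper.

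One caution on the step you yourself flag as delicate: the two short exact sequences you write for the fibres are not correct as stated --- their dimensions do not add up. The kernel of ${\rm ad}_\phi(E_G)_x\to{\rm ad}(E_G)_x$ is $\bigl({\rm ad}(E_G)_x/\mathcal{H}_x\bigr)\otimes\mathcal{O}_X(-D)_x$, not all of $W={\rm ad}(E_G)_x$, so the correct sequence is
$$0\longrightarrow (W/\mathcal{H}_x)\otimes\mathcal{O}_X(-D)_x\longrightarrow{\rm ad}_\phi(E_G)_x\longrightarrow\mathcal{H}_x\longrightarrow 0\, ,$$
and symmetrically
$$0\longrightarrow W/\mathcal{H}^\perp_x\longrightarrow\bigl({\rm ad}^n_\phi(E_G)\otimes\mathcal{O}_X(D)\bigr)_x\longrightarrow\mathcal{H}^\perp_x\otimes\mathcal{O}_X(D)_x\longrightarrow 0\, .$$
With these corrected graded pieces the pairing does match quotient against sub and sub against quotient, the $\mathcal{O}_X(\pm D)_x$ twists cancel, and the two perfect pairings you invoke are exactly the right ones; so the fix is purely notational and your filtered-to-graded conclusion stands.
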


\begin{proof}
Consider the fiberwise nodegenerate symmetric bilinear form $$\widehat{\sigma}\, :\,
{\rm ad}(E_G)^{\otimes 2}\, \longrightarrow\,{\mathcal O}_X$$ in \eqref{e6}. Tensoring
it with ${\mathcal O}_X(D)$ We get the homomorphism
\begin{equation}\label{eqzh}
\widehat{\sigma}\otimes{\rm Id}_{{\mathcal O}_X(D)}\, :\,
{\rm ad}(E_G)\otimes {\rm ad}(E_G)\otimes {\mathcal O}_X(D)\, \longrightarrow\,{\mathcal O}_X(D)\, .
\end{equation}
Recall
that both ${\rm ad}_\phi(E_G)$ and ${\rm ad}^n_\phi (E_G)$ are contained in
${\rm ad}(E_G)$ (see \eqref{e9a} and \eqref{e12}). It can be shown that
the image of the restriction of the homomorphism $\widehat{\sigma}\otimes{\rm Id}_{{\mathcal O}_X(D)}$
in \eqref{eqzh} to the subsheaf
$$
{\rm ad}_\phi(E_G)\otimes {\rm ad}^n_\phi (E_G)\otimes {\mathcal O}_X(D)
\,\subset\, {\rm ad}(E_G)\otimes {\rm ad}(E_G)\otimes {\mathcal O}_X(D)
$$
is contained in ${\mathcal O}_X\, \subset\, {\mathcal O}_X(D)$. Indeed, this follows from the
facts that for any $x\, \in\, D$, the image of ${\rm ad}_\phi(E_G)_x$ (respectively,
${\rm ad}^n_\phi (E_G)_x$) in ${\rm ad}(E_G)_x$ is ${\mathcal H}_x$ (respectively, ${\mathcal H}^\perp_x$),
and ${\mathcal H}_x$ annihilates ${\mathcal H}^\perp_x$ for the form $\widehat{\sigma}(x)$.

The above restricted homomorphism
$$
\widehat{\sigma}\otimes{\rm Id}_{{\mathcal O}_X(D)}\, :\,
{\rm ad}_\phi(E_G)\otimes {\rm ad}^n_\phi (E_G)\otimes {\mathcal O}_X(D)
\,\longrightarrow\, {\mathcal O}_X
$$
produces a homomorphism
\begin{equation}\label{S}
S\, :\, {\rm ad}^n_\phi (E_G)\otimes {\mathcal O}_X(D)\, \longrightarrow\,
{\rm ad}_\phi(E_G)^*\, .
\end{equation}
This homomorphism $S$ is an isomorphism over the complement $X\setminus D$, because 
\begin{itemize}
\item the pairing $\widehat{\sigma}$ in \eqref{e6} is fiberwise nondegenerate, and

\item ${\rm ad}^n_\phi (E_G)\vert_{X\setminus D}\,=\, \text{ad}(E_G)\vert_{X\setminus D} \,=\,
{\rm ad}_\phi(E_G)\vert_{X\setminus D}$.
\end{itemize}
Denote the torsion sheaf ${\rm ad}_\phi(E_G)^*/{\rm Image}(S)$ by $Q$, where $S$
is the homomorphism in \eqref{S}. So we have
\begin{equation}\label{deg}
\text{degree}(Q)\,=\, \text{degree}({\rm ad}_\phi(E_G)^*)- \text{degree}
({\rm ad}^n_\phi (E_G)\otimes {\mathcal O}_X(D))\, .
\end{equation}

Since $\widehat{\sigma}$ in \eqref{e6} produces an isomorphism of $\text{ad}(E_G)$ with
the dual vector bundle $\text{ad}(E_G)^*$, it follows that $\text{degree}(\text{ad}(E_G))
\,=\, 0$. Hence from \eqref{e9a} it follows immediately that
$$
\text{degree}({\rm ad}_\phi(E_G))\,=\, \sum_{x\in D} (\dim {\mathcal H}_x-\dim {\mathfrak g})\, , 
$$
while from \eqref{e12} it follows that
$$
\text{degree}({\rm ad}^n_\phi(E_G))\,=\, \sum_{x\in D} (\dim {\mathcal H}^\perp_x-
\dim {\mathfrak g})\, .
$$
Consequently, we have $\text{degree}({\rm ad}^n_\phi(E_G)\otimes{\mathcal O}_X(D))\,=\,
\sum_{x\in D} \dim {\mathcal H}^\perp_x$.
As $\dim {\mathcal H}_x+ \dim {\mathcal H}^\perp_x\,=\, \dim {\mathfrak g}$, from \eqref{deg}
it now follows that $\text{degree}(Q)\,=\, 0$. Since $Q$ is a torsion sheaf
with $\text{degree}(Q)\,=\, 0$, we conclude that $Q\,=\, 0$. Consequently, the
homomorphism $S$ in \eqref{S} is an isomorphism. This proves the first statement of the
proposition.

The isomorphism in the second statement of the proposition is
given by $S^* \otimes{\rm Id}_{{\mathcal O}_X(D)}$.
\end{proof}

\begin{remark}\label{remn2}
Consider the dual of the homomorphism ${\rm ad}_\phi(E_G)\, \hookrightarrow\, {\rm ad}(E_G)$
in \eqref{e9a}. From the first statement in Proposition \ref{prop1} we have
$$
{\rm ad}(E_G)\,=\,
{\rm ad}(E_G)^*\, \hookrightarrow\, {\rm ad}_\phi(E_G)^*\, \longrightarrow\,
{\rm ad}^n_\phi(E_G)\otimes {\mathcal O}_X(D)\, ;
$$
as in the proof of Proposition \ref{prop1}, the two vector bundles ${\rm ad}(E_G)$ and ${\rm ad}(E_G)^*$
are identified using $\widehat{\sigma}$.
\end{remark}

Consider the complex ${\mathcal D}_{\bullet}$ in \eqref{e13}. Its Serre dual complex, which
we shall denote by ${\mathcal D}^\vee_{\bullet}$, is the following:
\begin{equation}\label{e14}
{\mathcal D}^\vee_{\bullet} \,:\ \
{\mathcal D}^\vee_{0}\,=\, (\text{ad}^n_\phi(E_G)\otimes K_X(D))^*\otimes K_X
\, \stackrel{(f^0_\theta)^*\otimes{\rm Id}_{K_X}}{\longrightarrow}\,
{\mathcal D}^\vee_{1}\,=\, \text{ad}_\phi(E_G)^*\otimes K_X\, ;
\end{equation}
to clarify, ${\mathcal D}^\vee_{0}$ and ${\mathcal D}^\vee_{1}$ are at the $0$-th position and $1$-st
position respectively. From Proposition \ref{prop1} it follows that
\begin{itemize}
\item ${\mathcal D}^\vee_{0}\,=\,\text{ad}_\phi(E_G)$, and

\item ${\mathcal D}^\vee_1\,=\,\text{ad}^n_\phi(E_G)\otimes K_X(D)$.
\end{itemize}
Moreover, using these two identifications, the homomorphism $(f^0_\theta)^*\otimes{\rm Id}_{K_X}$ in \eqref{e14}
coincides with $f^0_\theta$. In other words, the dual complex
${\mathcal D}^\vee_{\bullet}$ is canonically identified with ${\mathcal D}_{\bullet}$;
this isomorphism of course depends on the bilinear form $\sigma$ in \eqref{e5}. Let
\begin{equation}\label{xi}
\xi\, :\, {\mathcal D}_{\bullet}\, \stackrel{\sim}{\longrightarrow}\,
{\mathcal D}^\vee_{\bullet}
\end{equation}
be this isomorphism. This isomorphism $\xi$ produces
an isomorphism
\begin{equation}\label{wtP}
\widetilde{\Phi}\,:=\, \xi_*\, :\, {\mathbb H}^1({\mathcal D}_{\bullet})\,
\stackrel{\sim}{\longrightarrow}\, {\mathbb H}^1({\mathcal D}^\vee _{\bullet})
\end{equation}
of hypercohomologies. On the other hand, Serre duality gives that
$$
{\mathbb H}^1({\mathcal D}^\vee _{\bullet})\,=\, {\mathbb H}^1({\mathcal D}_{\bullet})^*
$$
(cf. \cite[p.~67, Theorem 3.12]{Huy}).
Using this, the isomorphism $\widetilde{\Phi}$ in \eqref{wtP} produces an isomorphism
\begin{equation}\label{e15}
\Phi_{(E_G, \phi, \theta)} \, :\, {\mathbb H}^1({\mathcal D}_{\bullet})\,
\stackrel{\sim}{\longrightarrow}\, {\mathbb H}^1({\mathcal D} _{\bullet})^*\, .
\end{equation}
This homomorphism $\Phi_{(E_G, \phi, \theta)}$ is clearly skew-symmetric.

We shall now describe an alternative construction of the
homomorphism $\Phi_{(E_G, \phi, \theta)}$ in \eqref{e15}.

Consider the tensor product of complexes $\widehat{\mathcal D}_{\bullet}\, :=\,
{\mathcal D}_{\bullet}\otimes {\mathcal D}_{\bullet}$. So
$$
\widehat{\mathcal D}_{\bullet}\, :\, \widehat{\mathcal D}_0
\,=\, {\rm ad}_\phi(E_G)\otimes {\rm ad}_\phi(E_G)\,
\stackrel{f^0_\theta\otimes{\rm Id}+ {\rm Id}\otimes f^0_\theta}{\longrightarrow}\,
\widehat{\mathcal D}_1
$$
$$
=\, ((\text{ad}^n_\phi(E_G)\otimes K_X(D))\otimes {\rm ad}_\phi(E_G))\oplus ({\rm ad}_\phi(E_G)\otimes
(\text{ad}^n_\phi(E_G)\otimes K_X(D)))
$$
$$
\stackrel{{\rm Id}\otimes f^0_\theta- f^0_\theta\otimes {\rm Id}}{\longrightarrow}\,
\widehat{\mathcal D}_2\,=\,
(\text{ad}^n_\phi(E_G)\otimes K_X(D))\otimes (\text{ad}^n_\phi(E_G)\otimes K_X(D))\, ;
$$
to clarify, $\widehat{\mathcal D}_i$ is at the $i$-th position. Consider the homomorphism
$$
\gamma\, :\, \widehat{\mathcal D}_1 \,=\, ((\text{ad}^n_\phi(E_G)\otimes K_X(D))
\otimes {\rm ad}_\phi(E_G))\oplus ({\rm ad}_\phi(E_G)\otimes
(\text{ad}^n_\phi(E_G)\otimes K_X(D)))
$$
$$
\longrightarrow\, K_X\, , \ \ (a\otimes b) +(c\otimes d)\, \longmapsto\,
\widehat{\sigma}(a\otimes b)+ \widehat{\sigma}(c\otimes d)\, ,
$$
where $\widehat{\sigma}$ is the pairing in \eqref{e6}. Using \eqref{gi} it is
straight-forward to deduce that
$$
\gamma\circ (f^0_\theta\otimes{\rm Id}+ {\rm Id}\otimes f^0_\theta)\,=\, 0\, .
$$
Consequently, $\gamma$ produces a homomorphism of complexes
\begin{equation}\label{e16}
\Gamma\, :\, \widehat{\mathcal D}_{\bullet}\, \longrightarrow\,K_X[-1]\, ,
\end{equation}
where $K_X[-1]$ is the complex $0\, \longrightarrow\, K_X$, with $K_X$ being at the
$1$-st position. More precisely, $\Gamma$ is the following homomorphism of complexes:
$$
\begin{matrix}
\widehat{\mathcal D}_{\bullet} & : & 
\widehat{\mathcal D}_0 & \longrightarrow & \widehat{\mathcal D}_1 & \longrightarrow &
\widehat{\mathcal D}_2\\
~\Big\downarrow\Gamma && \Big\downarrow && ~\Big\downarrow\gamma && \Big\downarrow\\
K_X[-1] & : & 
0 & \longrightarrow & K_X & \longrightarrow & 0
\end{matrix}
$$
Now we have the homomorphisms of hypercohomologies
\begin{equation}\label{e17}
{\mathbb H}^1({\mathcal D}_{\bullet})\otimes {\mathbb H}^1({\mathcal D}_{\bullet})\,
\longrightarrow\,{\mathbb H}^2({\mathcal D}_{\bullet}\otimes {\mathcal D}_{\bullet})\,=\,
{\mathbb H}^2(\widehat{\mathcal D}_{\bullet})\, \stackrel{\Gamma_*}{\longrightarrow}\,
{\mathbb H}^2(K_X[-1])
\end{equation}
$$
=\, H^1(X,\, K_X)\,=\, H^1(X,\, {\mathcal O}_X)^*\,=\, {\mathbb C}\, ,
$$
where $\Gamma_*$ is the homomorphism of hypercohomologies induced by the homomorphism $\Gamma$
in \eqref{e16}.

The bilinear form on
${\mathbb H}^1({\mathcal D}_{\bullet})$ constructed in \eqref{e17} coincides with the one given by
the isomorphism $\Phi_{(E_G, \phi, \theta)}$ in \eqref{e15}.

Recall from Lemma \ref{lem-2} that the space of infinitesimal deformations of
$(E_G,\, \phi,\, \theta)$ is identified with ${\mathbb H}^1({\mathcal D}_{\bullet})$.

The above constructions are summarized in the following lemma:

\begin{lemma}\label{lem2}
The space of infinitesimal deformations of any given framed $G$--Higgs bundle $(E_G,\, \phi,\, 
\theta)$, namely ${\mathbb H}^1({\mathcal D}_{\bullet})$, is equipped with a natural symplectic 
structure $\Phi_{(E_G, \phi, \theta)}$ that is constructed in \eqref{e15} (and also in \eqref{e17}).
\end{lemma}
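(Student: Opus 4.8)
\textbf{Proof proposal for Lemma \ref{lem2}.}

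The plan is to verify the two assertions separately: first that $\Phi_{(E_G,\phi,\theta)}$ is skew-symmetric (hence an alternating bilinear form), and then that it is nondegenerate, so that together they make it a symplectic form on the finite-dimensional vector space ${\mathbb H}^1({\mathcal D}_{\bullet})$. Skew-symmetry has essentially already been recorded: the isomorphism $\xi\,:\,{\mathcal D}_{\bullet}\,\stackrel{\sim}{\longrightarrow}\,{\mathcal D}^\vee_{\bullet}$ in \eqref{xi} is built from the symmetric form $\widehat{\sigma}$, and the induced pairing was already observed to be skew-symmetric right after \eqref{e15}. I would nonetheless spell out why: under the identification ${\mathcal D}^\vee_{\bullet}\,\cong\,{\mathcal D}_{\bullet}$ the map $\xi$ is its own transpose up to the sign coming from the shift by $[-1]$ in the Serre-dual complex (the dualizing object $K_X[1]$ sits in cohomological degree placing the pairing ${\mathbb H}^1\otimes{\mathbb H}^1\to{\mathbb H}^2(K_X[-1])=H^1(X,K_X)={\mathbb C}$ in the ``odd'' slot), and a symmetric bilinear form on a complex induces an \emph{anti}symmetric pairing on ${\mathbb H}^1$ precisely because of the Koszul sign in the total complex ${\mathcal D}_{\bullet}\otimes{\mathcal D}_{\bullet}$; this is exactly the content of the formula for $\gamma$ in \eqref{e16}, where the second differential $\widehat{\mathcal D}_1\to\widehat{\mathcal D}_2$ carries the sign ${\rm Id}\otimes f^0_\theta-f^0_\theta\otimes{\rm Id}$.

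For nondegeneracy, the key point is that $\Phi_{(E_G,\phi,\theta)}$ is obtained by composing the isomorphism $\widetilde{\Phi}=\xi_*\,:\,{\mathbb H}^1({\mathcal D}_{\bullet})\,\stackrel{\sim}{\longrightarrow}\,{\mathbb H}^1({\mathcal D}^\vee_{\bullet})$ with the Serre-duality isomorphism ${\mathbb H}^1({\mathcal D}^\vee_{\bullet})\,=\,{\mathbb H}^1({\mathcal D}_{\bullet})^*$. Both of these are isomorphisms: $\xi_*$ because $\xi$ is an isomorphism of complexes (it is built from the fiberwise nondegenerate form $\widehat{\sigma}$, Proposition \ref{prop1} guaranteeing that $\widehat{\sigma}$ identifies ${\rm ad}^n_\phi(E_G)\otimes K_X(D)$ with ${\rm ad}_\phi(E_G)^*$ and compatibly with the differentials, as recorded in the two bullet points after \eqref{e14}); and Serre duality for hypercohomology of a bounded complex of locally free sheaves on the compact Riemann surface $X$ (cf. \cite[p.~67, Theorem 3.12]{Huy}) supplies the second. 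A composition of two linear isomorphisms is a linear isomorphism, so $\Phi_{(E_G,\phi,\theta)}\,:\,{\mathbb H}^1({\mathcal D}_{\bullet})\,\longrightarrow\,{\mathbb H}^1({\mathcal D}_{\bullet})^*$ is an isomorphism, which is the statement that the bilinear form is nondegenerate. Combined with skew-symmetry from the previous paragraph, this makes $\Phi_{(E_G,\phi,\theta)}$ a symplectic form.

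The one point that genuinely needs care — and which I expect to be the main obstacle — is checking that $\xi$ is actually a \emph{morphism of complexes}, i.e.\ that under the Serre-dual identifications ${\mathcal D}^\vee_0={\rm ad}_\phi(E_G)$ and ${\mathcal D}^\vee_1={\rm ad}^n_\phi(E_G)\otimes K_X$ the differential $(f^0_\theta)^*\otimes{\rm Id}_{K_X}$ becomes $f^0_\theta$ on the nose (claimed just before \eqref{xi}). This is where the $G$--invariance identity \eqref{gi} for $\sigma$ enters: transposing the bracket map $t\mapsto[\theta,t]$ with respect to $\widehat{\sigma}$ reproduces $\pm[\theta,-]$, and one must track the sign against the sign inherent in Serre duality of a two-term complex so that the two cancel. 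Once this compatibility is confirmed, the equality of the two descriptions of $\Phi_{(E_G,\phi,\theta)}$ — the abstract one via $\xi_*$ and Serre duality in \eqref{e15}, and the explicit cup-product-with-$\Gamma$ one in \eqref{e17} — is the standard statement that Serre duality is implemented by the cup product followed by the trace $H^1(X,K_X)\,\cong\,{\mathbb C}$, and the verification $\gamma\circ(f^0_\theta\otimes{\rm Id}+{\rm Id}\otimes f^0_\theta)=0$ has already been done above \eqref{e16}. I would present the skew-symmetry and nondegeneracy arguments in full and merely indicate the routine sign bookkeeping, since no new idea is required there.
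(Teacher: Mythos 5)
Your proposal is correct and follows essentially the same route as the paper: the lemma is there stated as a summary of the constructions in \eqref{e15} and \eqref{e17}, and its proof amounts to exactly what you give — nondegeneracy because $\Phi_{(E_G,\phi,\theta)}$ is the composition of the isomorphism $\xi_*$ with the Serre-duality isomorphism, and skew-symmetry from the Koszul sign visible in the pairing $\Gamma$ on $\widehat{\mathcal D}_{\bullet}$. The only point you flag as delicate (that $\xi$ is a genuine chain map, via Proposition \ref{prop1} and the invariance identity \eqref{gi}) is precisely the compatibility the paper asserts just before \eqref{xi}, so no new content is needed.
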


\subsection{A Poisson structure}

Take a $D$--twisted $G$--Higgs bundle $(E_G,\, \theta)$ as in Lemma \ref{lem-1}.
Consider the hypercohomology ${\mathbb H}^1({\mathcal C}'_{\bullet})$, where ${\mathcal C}'_{\bullet}$
is constructed in \eqref{e10}.
Following \cite{Bot}, \cite{BR}, \cite{Ma}, we shall show that there is a natural homomorphism to
it from its dual ${\mathbb H}^1({\mathcal C}'_{\bullet})^*$.

\begin{proposition}\label{prop2}
There is a natural homomorphism
$$
P_{(E_G, \theta)}\, :\, {\mathbb H}^1({\mathcal C}'_{\bullet})^*\, \longrightarrow\,
{\mathbb H}^1({\mathcal C}'_{\bullet})\, .
$$
\end{proposition}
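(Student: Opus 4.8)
The plan is to imitate the construction of the symplectic form $\Phi_{(E_G,\phi,\theta)}$ from \eqref{e15}--\eqref{e17}, but now for the complex ${\mathcal C}'_{\bullet}$ instead of ${\mathcal D}_{\bullet}$. The key point is that ${\mathcal C}'_{\bullet}$ is no longer Serre self-dual: its Serre dual complex is
$$
{\mathcal C}'^{\vee}_{\bullet}\,:\ \ (\text{ad}(E_G)\otimes K_X(D))^*\otimes K_X\, \stackrel{f_\theta^*\otimes{\rm Id}_{K_X}}{\longrightarrow}\, \text{ad}(E_G)^*\otimes K_X\, ,
$$
which, using the identification $\widehat{\sigma}:\text{ad}(E_G)\stackrel{\sim}{\to}\text{ad}(E_G)^*$ from \eqref{e6}, becomes the complex
$$
\text{ad}(E_G)\otimes {\mathcal O}_X(-D)\, \stackrel{f_\theta}{\longrightarrow}\, \text{ad}(E_G)\otimes K_X\, .
$$
This is precisely a \emph{subcomplex} of ${\mathcal C}'_{\bullet}$ (twist down the $0$th term by $-D$ and untwist the $1$st term): the natural inclusion of sheaves $\text{ad}(E_G)(-D)\hookrightarrow\text{ad}(E_G)$ in degree $0$ and $\text{ad}(E_G)\otimes K_X\hookrightarrow\text{ad}(E_G)\otimes K_X(D)$ in degree $1$ commute with $f_\theta$ on both sides. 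Hence there is a canonical morphism of complexes $\iota:{\mathcal C}'^{\vee}_{\bullet}\to{\mathcal C}'_{\bullet}$.

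First I would make this morphism $\iota$ explicit and check it commutes with the differentials (this is immediate since $f_\theta$ is ${\mathcal O}_X$-linear and the inclusions are the obvious sheaf inclusions after twisting). Then I take the induced map on first hypercohomology,
$$
\iota_*\,:\, {\mathbb H}^1({\mathcal C}'^{\vee}_{\bullet})\, \longrightarrow\, {\mathbb H}^1({\mathcal C}'_{\bullet})\, .
$$
Next, Serre duality for hypercohomology of a two-term complex of locally free sheaves (as in \cite[p.~67, Theorem 3.12]{Huy}, exactly as invoked for ${\mathcal D}_{\bullet}$ before \eqref{e15}) gives the canonical identification ${\mathbb H}^1({\mathcal C}'^{\vee}_{\bullet})\,=\,{\mathbb H}^1({\mathcal C}'_{\bullet})^*$. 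Composing, we obtain
$$
P_{(E_G,\theta)}\,:=\,\iota_*\,:\, {\mathbb H}^1({\mathcal C}'_{\bullet})^*\,\longrightarrow\,{\mathbb H}^1({\mathcal C}'_{\bullet})\, ,
$$
which is the required natural homomorphism. Alternatively, and equivalently, one can build it by the pairing route: there is a pairing $\widehat{\mathcal C}'_{\bullet}:={\mathcal C}'_{\bullet}\otimes{\mathcal C}'_{\bullet}\to K_X(D)[-1]$ obtained from $\widehat\sigma$ and the $G$-invariance identity \eqref{gi} (just as in \eqref{e16}); this is not valued in $K_X$, which is exactly why one only gets a Poisson rather than a symplectic structure. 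Restricting the source along the subcomplex ${\mathcal C}'^{\vee}_{\bullet}\otimes{\mathcal C}'_{\bullet}$, the $-D$ twist on one factor cancels the $+D$, producing a pairing valued in $K_X$ and hence, after passing to ${\mathbb H}^2$ and using ${\mathbb H}^2(K_X[-1])=H^1(X,K_X)={\mathbb C}$, a map ${\mathbb H}^1({\mathcal C}'^{\vee}_{\bullet})\otimes{\mathbb H}^1({\mathcal C}'_{\bullet})\to{\mathbb C}$; this is the adjoint of $P_{(E_G,\theta)}$ under Serre duality, and one checks the two constructions agree.

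The main obstacle, and the only place requiring care, is verifying the compatibility of $\iota$ with the differentials and, more importantly, that the resulting $P_{(E_G,\theta)}$ is skew-symmetric and independent of choices — i.e.\ that it genuinely deserves to be called a Poisson bivector. Skew-symmetry follows from the skew-symmetry of the sign conventions in the cup product on hypercohomology combined with the symmetry of $\widehat\sigma$, exactly as for $\Phi_{(E_G,\phi,\theta)}$ in \eqref{e15}--\eqref{e17}; naturality is clear from the functoriality of the whole construction in $(E_G,\theta)$. One should also note the compatibility with the framed picture: under the maps of complexes ${\mathcal D}_{\bullet}\to{\mathcal C}'_{\bullet}$ and ${\mathcal C}'^{\vee}_{\bullet}\to{\mathcal D}^{\vee}_{\bullet}={\mathcal D}_{\bullet}$, the homomorphism $P_{(E_G,\theta)}$ is intertwined with the isomorphism $\Phi_{(E_G,\phi,\theta)}$ of Lemma \ref{lem2}, which is what will later make the forgetful map Poisson; but for the present statement only the existence of the natural homomorphism $P_{(E_G,\theta)}$ is asserted, and the construction above supplies it.
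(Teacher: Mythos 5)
Your construction is exactly the one the paper uses: after identifying $({\mathcal C}')^\vee_{\bullet}$ via $\widehat{\sigma}$ with the subcomplex $\text{ad}(E_G)\otimes{\mathcal O}_X(-D)\stackrel{f_\theta}{\longrightarrow}\text{ad}(E_G)\otimes K_X$ of ${\mathcal C}'_{\bullet}$, the paper defines $P_{(E_G,\theta)}$ as the map on ${\mathbb H}^1$ induced by the natural inclusions composed with Serre duality, which is precisely your $\iota_*$ (the paper calls it ${\mathcal R}_*$). The additional remarks on skew-symmetry and compatibility with the framed picture go beyond what the proposition asserts, but they are consistent with how the paper later uses $P_{(E_G,\theta)}$ in Theorem \ref{thm2}.
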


\begin{proof}
Let $({\mathcal C}')^\vee_{\bullet}$ denote the Serre dual complex of ${\mathcal C}'_{\bullet}$
in \eqref{e10}. So we have
\begin{equation}\label{e18}
({\mathcal C}')^\vee_{\bullet} \,:\ \ ({\mathcal C}')^\vee_{0}\,=\, (\text{ad}(E_G)\otimes K_X(D))^*\otimes K_X
\, \stackrel{f^*_\theta\otimes {\rm Id}_{K_X}}{\longrightarrow}\,
({\mathcal C}')^\vee_{1}\,=\, \text{ad}(E_G)^*\otimes K_X\, ,
\end{equation}
where $({\mathcal C}')^\vee_{i}$ is at the $i$-th position. As done before, the form
$\widehat{\sigma}$ in \eqref{e6} identifies $\text{ad}(E_G)^*$ with $\text{ad}(E_G)$. So
$$({\mathcal C}')^\vee_{0}\,=\, \text{ad}(E_G)\otimes {\mathcal O}_X(-D)\ \ \text{ and }\ \
({\mathcal C}')^\vee_1\,=\, \text{ad}(E_G)\otimes K_X\, .$$
Using these two identifications, the homomorphism 
$f^*_\theta\otimes {\rm Id}_{K_X}$ in \eqref{e18} coincides with the restriction of $f_\theta$
to the subsheaf $\text{ad}(E_G)\otimes {\mathcal O}_X(-D)\, \subset\, \text{ad}(E_G)$; this restriction
of $f_\theta$ to $\text{ad}(E_G)\otimes {\mathcal O}_X(-D)$ will also be denoted by $f_\theta$. Hence
the complex $({\mathcal C}')^\vee_{\bullet}$ in \eqref{e18} becomes
\begin{equation}\label{ad}
({\mathcal C}')^\vee_{\bullet} \,:\ \ ({\mathcal C}')^\vee_{0}\,=\, \text{ad}(E_G)\otimes {\mathcal O}_X(-D)
\, \stackrel{f_\theta}{\longrightarrow}\,
({\mathcal C}')^\vee_{1}\,=\, \text{ad}(E_G)\otimes K_X\, .
\end{equation}
Consequently, we have a homomorphism of complexes ${\mathcal R}\, :\, ({\mathcal C}')^\vee_{\bullet}\, \longrightarrow\,
{\mathcal C}'_{\bullet}$ defined by
$$
\begin{matrix}
({\mathcal C}')^\vee_{\bullet} & : & 
\text{ad}(E_G)\otimes {\mathcal O}_X(-D) & \longrightarrow &\text{ad}(E_G)\otimes K_X\\
~\Big\downarrow{\mathcal R} && \Big\downarrow && ~\Big\downarrow\gamma\\
{\mathcal C}'_{\bullet} & : & \text{ad}(E_G) & \longrightarrow & \text{ad}(E_G)\otimes K_X(D)
\end{matrix}
$$
where the homomorphisms $$\text{ad}(E_G)\otimes{\mathcal O}_X(-D)\, \longrightarrow\, 
\text{ad}(E_G)\ \ \text{ and }\ \ \text{ad}(E_G)\otimes K_X\, \longrightarrow\, \text{ad}(E_G)\otimes 
K_X(D)$$ are the natural inclusions (recall that the divisor $D$ is effective, so
${\mathcal O}_X\, \hookrightarrow\, {\mathcal O}_X(D)$).

Serre duality gives that
\begin{equation}\label{sdh}
{\mathbb H}^1(({\mathcal C}')^\vee_{\bullet})\,=\, {\mathbb H}^1({\mathcal C}'_{\bullet})^*\, .
\end{equation}
Hence the above homomorphism ${\mathcal R}$ of complexes produces the following
homomorphism of hypercohomologies
\begin{equation}\label{er}
{\mathbb H}^1({\mathcal C}'_{\bullet})^*\,=\, {\mathbb H}^1(({\mathcal C}')^\vee_{\bullet})\,
\stackrel{{\mathcal R}_*}{\longrightarrow}\, {\mathbb H}^1({\mathcal C}'_{\bullet})\, ,
\end{equation}
where ${\mathcal R}_*$ is the homomorphism of hypercohomologies induced by $\mathcal R$;
the above isomorphism ${\mathbb H}^1({\mathcal C}'_{\bullet})^*\,=\, {\mathbb H}^1(({\mathcal C}')^\vee_{\bullet})$
is the one in \eqref{sdh}. The homomorphism ${\mathbb H}^1({\mathcal C}'_{\bullet})^*\,
{\longrightarrow}\, {\mathbb H}^1({\mathcal C}'_{\bullet})$ in \eqref{er}
is the homomorphism $P_{(E_G, \theta)}$ in the proposition that we are seeking.
\end{proof}

\section{Symplectic geometry of moduli of framed $G$--Higgs bundles}\label{sec:SP}

\subsection{Moduli space of framed $G$--Higgs bundles}\label{se5.1}

As before, for each point $x\, \in\, D$, fix a Zariski closed complex algebraic
proper subgroup $H_x$ of the complex reductive affine algebraic group $G$.

The topologically isomorphism classes of principal $G$--bundles on $X$ are parametrized by the
elements of the fundamental group $\pi_1(G)$ \cite{steenrod},
\cite[p.~186, Proposition 1.3(a)]{BLS}. Fix an element
$$
\nu\, \in\, \pi_1(G)\, .
$$

Let ${\mathcal M}_{H}(G)$ denote the moduli space of stable
$D$--twisted $G$--Higgs bundle on $X$ of the form
$(E_G,\, \theta)$, where
\begin{itemize}
\item $E_G$ is a holomorphic principal $G$--bundle on $X$ of topological type
$\nu$, and

\item $\theta\, \in\, H^0(X,\, \text{ad}(E_G)\otimes K_X(D))$.
\end{itemize}
(\cite{Si1, Si2, Nitin}).

Lemma \ref{lem-1} and Lemma \ref{lemn1} combine together to give the
following (see also Remark \ref{remn1}):

\begin{corollary}\label{cord0}
Assume that ${\rm genus}(X)\, \geq\, 1$. For any point $(E_G,\, \theta)\,\in\,
{\mathcal M}_{H}(G)$,
$$
T_{(E_G,\theta)}{\mathcal M}_{H}(G)\,=\, {\mathbb H}^1({\mathcal C}'_{\bullet})\, ,
$$
where ${\mathcal C}'_{\bullet}$ is the complex in \eqref{e10}.

The moduli space ${\mathcal M}_{H}(G)$
is a smooth orbifold of dimension $\dim G\cdot (2\cdot({\rm genus}(X)-1)
+n)+\dim Z({\mathfrak g})$, where $n\, =\,\# D$, and $Z({\mathfrak g})\, \subset\,
{\mathfrak g}\,:=\, {\rm Lie}({\mathfrak g})$ is the center of the Lie algebra.
\end{corollary}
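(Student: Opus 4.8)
The plan is to deduce this corollary directly from the two cited lemmas, treating the identification of the tangent space and the dimension count as two essentially formal consequences. First I would recall that for a stable $D$-twisted $G$-Higgs bundle $(E_G,\, \theta)$, Lemma \ref{lemn1} gives ${\mathbb H}^2({\mathcal C}'_{\bullet})\,=\, 0$, so by Remark \ref{remn1} the deformation functor of $(E_G,\, \theta)$ is unobstructed. Combined with the fact (from the stability hypothesis, cf.\ the computation ${\mathbb H}^0({\mathcal C}'_{\bullet})\,=\, Z({\mathfrak g})$ in Lemma \ref{lemn1}) that the automorphisms are minimal, this is exactly what is needed for the local deformation theory to produce a smooth orbifold: the Kuranishi-type slice is cut out by the obstruction map into ${\mathbb H}^2({\mathcal C}'_{\bullet})\,=\, 0$, hence is smooth, and the residual $Z({\mathfrak g})$-automorphisms act trivially on the base but give the orbifold structure. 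Lemma \ref{lem-1} then identifies $T_{(E_G,\theta)}{\mathcal M}_{H}(G)$ with ${\mathbb H}^1({\mathcal C}'_{\bullet})$, which is the first assertion.

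For the dimension, I would simply invoke the last sentence of Lemma \ref{lemn1}, which already computes
$$
\dim {\mathbb H}^1({\mathcal C}'_{\bullet})\,=\,\dim G\cdot (2\cdot({\rm genus}(X)-1)+n)+\dim Z({\mathfrak g})\, ,
$$
and note that, by the smoothness just established, this equals $\dim {\mathcal M}_{H}(G)$ at the point $(E_G,\, \theta)$. Connectedness of the moduli space (for a fixed topological type $\nu$), which is needed to speak of \emph{the} dimension, is part of the cited construction of ${\mathcal M}_{H}(G)$ in \cite{Nitin, H2, Si1, Si2}, so I would cite that rather than reprove it.

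The only real subtlety — and thus the step I expect to require the most care — is passing rigorously from ``the deformation space at each point is smooth of the stated dimension'' to ``the moduli \emph{space} is a smooth orbifold.'' This requires knowing that a good local moduli functor (a slice through a suitable gauge-theoretic or GIT construction) exists and that the stable locus is exactly where the slice is free of infinitesimal automorphisms beyond $Z({\mathfrak g})$; here one appeals to the standard constructions of moduli of $D$-twisted $G$-Higgs bundles in \cite{Si1, Si2, Nitin}. Given that machinery, the corollary is immediate: Lemma \ref{lem-1} supplies the tangent space, Remark \ref{remn1} supplies smoothness, and Lemma \ref{lemn1} supplies the dimension, so the proof is short and essentially a matter of assembling these ingredients and citing the existence of the moduli space.
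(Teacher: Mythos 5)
Your proposal matches the paper's argument: the corollary is stated there as an immediate consequence of Lemma \ref{lem-1} (tangent space identification), Lemma \ref{lemn1} (the dimension count and vanishing of ${\mathbb H}^2({\mathcal C}'_{\bullet})$), and Remark \ref{remn1} (unobstructedness), with the existence and smooth orbifold structure of the moduli space deferred to the cited constructions. Your added care about passing from pointwise unobstructedness to global smoothness is reasonable but does not diverge from the paper's route.
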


Let ${\mathcal M}_{FH}(G)$ denote the moduli space of stable framed $G$--Higgs bundles
of topological type $\nu$ (\cite{Si1, Si2, Nitin, Ma,DMa,DG}). Let
\begin{equation}\label{vp}
\varphi\, :\, {\mathcal M}_{FH}(G)\, \longrightarrow\, {\mathcal M}_{H}(G)
\end{equation}
be the forgetful morphism that sends any triple $(E_G,\, \phi,\, \theta)$ to
$(E_G,\, \theta)$.

Define
\begin{equation}\label{int}
Z_{\mathfrak h}\, =\, (\bigcap_{x\in D} {\mathfrak h}_x)\cap Z({\mathfrak g})\, ,
\end{equation}
where ${\mathfrak h}_x$ as before denotes the Lie algebra of the subgroup $H_x$ of $G$.

\begin{proposition}\label{propdim}
Assume that ${\rm genus}(X)\, \geq\, 1$.
Let $(E_G,\, \phi,\, \theta)$ be a stable framed $G$--Higgs bundle. Let 
${\mathcal D}_{\bullet}$ be the complex in \eqref{e13} associated to $(E_G,\, \phi,\, \theta)$.
Then the following three hold:
\begin{enumerate}
\item ${\mathbb H}^0({\mathcal D}_{\bullet})\,=\, Z_{\mathfrak h}$, where $Z_{\mathfrak h}$
is defined in \eqref{int},

\item ${\mathbb H}^2({\mathcal D}_{\bullet})\,=\, Z^*_{\mathfrak h}$,

\item $\dim {\mathbb H}^1({\mathcal D}_{\bullet})\,=\, 2(\dim Z_{\mathfrak h}+
\dim G\cdot ({\rm genus}(X)-1+n) -\sum_{x\in D} \dim {\mathfrak h}_x)$, where $n\,=\,\# D$.
\end{enumerate}
\end{proposition}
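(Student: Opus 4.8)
The plan is to dispatch the three assertions in turn, reducing the computations of $\mathbb{H}^0$ and $\mathbb{H}^2$ to facts already in hand and then pinning down $\dim\mathbb{H}^1$ by an Euler characteristic count, much as in the proof of Lemma \ref{lemn1}. For assertion (1), since $\mathcal{D}_\bullet$ is the two-term complex $\mathcal{D}_0\xrightarrow{f^0_\theta}\mathcal{D}_1$ concentrated in degrees $0$ and $1$, I would use that $\mathbb{H}^0(\mathcal{D}_\bullet)=\ker\bigl(H^0(X,\text{ad}_\phi(E_G))\xrightarrow{f^0_\theta}H^0(X,\text{ad}^n_\phi(E_G)\otimes K_X(D))\bigr)=\{v\in H^0(X,\text{ad}_\phi(E_G))\mid[\theta,v]=0\}$. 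Because $\text{ad}_\phi(E_G)\subset\text{ad}(E_G)$, the stability of $(E_G,\theta)$ — precisely \eqref{sc}, established in the proof of Lemma \ref{lemn1} — identifies $\{v\in H^0(X,\text{ad}(E_G))\mid[\theta,v]=0\}$ with the constant sections valued in $Z(\mathfrak g)$, viewed as the trivial subbundle $X\times Z(\mathfrak g)\subset\text{ad}(E_G)$ (canonical since the adjoint action fixes $Z(\mathfrak g)$ pointwise). Hence $\mathbb{H}^0(\mathcal{D}_\bullet)$ consists of those constant central sections $v$ that in addition satisfy $v(x)\in\mathcal{H}_x$ for every $x\in D$, which is the condition defining $\text{ad}_\phi(E_G)$ in \eqref{e9a}. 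As $\mathcal{H}_x$ is identified with $\mathfrak h_x$ by an isomorphism unique up to the adjoint action of $H_x$, and that action fixes $Z(\mathfrak g)$ pointwise, the requirement $v(x)\in\mathcal{H}_x$ on a central $v$ is the unambiguous condition $v\in\mathfrak h_x\cap Z(\mathfrak g)$; intersecting over $x\in D$ gives $\mathbb{H}^0(\mathcal{D}_\bullet)=\bigl(\bigcap_{x\in D}\mathfrak h_x\bigr)\cap Z(\mathfrak g)=Z_{\mathfrak h}$, the space in \eqref{int}.

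For assertion (2), I would invoke the canonical isomorphism of complexes $\xi\colon\mathcal{D}_\bullet\xrightarrow{\sim}\mathcal{D}^\vee_\bullet$ from \eqref{xi}, where $\mathcal{D}^\vee_\bullet$ is the Serre-dual complex \eqref{e14}, furnished by Proposition \ref{prop1}. Serre duality for hypercohomology of a two-term complex of locally free sheaves on the curve $X$ gives $\mathbb{H}^i(\mathcal{D}^\vee_\bullet)\cong\mathbb{H}^{2-i}(\mathcal{D}_\bullet)^*$; the case $i=1$ is the Serre-duality identification $\mathbb{H}^1(\mathcal{D}^\vee_\bullet)=\mathbb{H}^1(\mathcal{D}_\bullet)^*$ already used in passing from \eqref{wtP} to \eqref{e15}. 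Taking $i=0$ and composing with $\xi$ yields $\mathbb{H}^2(\mathcal{D}_\bullet)^*\cong\mathbb{H}^0(\mathcal{D}^\vee_\bullet)\cong\mathbb{H}^0(\mathcal{D}_\bullet)=Z_{\mathfrak h}$ by part (1), hence $\mathbb{H}^2(\mathcal{D}_\bullet)=Z^*_{\mathfrak h}$.

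For assertion (3), I would use that the hypercohomology spectral sequence of $\mathcal{D}_\bullet$ gives $\dim\mathbb{H}^0(\mathcal{D}_\bullet)-\dim\mathbb{H}^1(\mathcal{D}_\bullet)+\dim\mathbb{H}^2(\mathcal{D}_\bullet)=\chi(\text{ad}_\phi(E_G))-\chi(\text{ad}^n_\phi(E_G)\otimes K_X(D))$. Both sheaves are locally free of rank $\dim G$, and the degree computations in the proof of Proposition \ref{prop1} give $\deg\text{ad}_\phi(E_G)=\sum_{x\in D}(\dim\mathfrak h_x-\dim G)$ and $\deg(\text{ad}^n_\phi(E_G)\otimes K_X(D))=\sum_{x\in D}\dim\mathcal{H}^\perp_x+\dim G\cdot(2\,\text{genus}(X)-2)$, with $\dim\mathcal{H}^\perp_x=\dim G-\dim\mathfrak h_x$. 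Substituting these into Riemann--Roch, inserting $\dim\mathbb{H}^0(\mathcal{D}_\bullet)=\dim\mathbb{H}^2(\mathcal{D}_\bullet)=\dim Z_{\mathfrak h}$ from (1) and (2), and solving for $\dim\mathbb{H}^1(\mathcal{D}_\bullet)$, the arithmetic collapses to $\dim\mathbb{H}^1(\mathcal{D}_\bullet)=2\bigl(\dim Z_{\mathfrak h}+\dim G\cdot(\text{genus}(X)-1+n)-\sum_{x\in D}\dim\mathfrak h_x\bigr)$, which is (3).

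The one genuinely delicate point is inside step (1): checking that the constant central sections belonging to $\text{ad}_\phi(E_G)$ are exactly those lying in $\bigl(\bigcap_{x\in D}\mathfrak h_x\bigr)\cap Z(\mathfrak g)$, i.e. that this set is insensitive to the adjoint $H_x$-ambiguity in the identification $\mathcal{H}_x\cong\mathfrak h_x$ — which works precisely because $Z(\mathfrak g)$ is pointwise $G$-fixed, hence $H_x$-fixed. Steps (2) and (3) are then routine bookkeeping with Serre duality and Riemann--Roch, closely parallel to the proofs of Lemma \ref{lemn1} and Proposition \ref{prop1}.
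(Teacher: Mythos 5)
Your proposal is correct and follows essentially the same route as the paper: part (1) reduces to $\mathbb{H}^0(\mathcal{C}'_\bullet)=Z(\mathfrak g)$ from Lemma \ref{lemn1} plus the pointwise condition $v(x)\in\mathcal{H}_x$, part (2) is Serre duality combined with the self-duality isomorphism $\xi$, and part (3) is the Euler characteristic identity for the two-term complex together with Riemann--Roch (the paper shortcuts the second Euler characteristic via $\chi(\mathrm{ad}^n_\phi(E_G)\otimes K_X(D))=-\chi(\mathrm{ad}_\phi(E_G))$, but your direct degree computation gives the same numbers). Your extra remark on why $\mathcal{H}_x\cap Z(\mathfrak g)=\mathfrak h_x\cap Z(\mathfrak g)$ is well defined despite the $H_x$-conjugation ambiguity is a point the paper leaves implicit.
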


\begin{proof}
Since ${\mathcal D}_{\bullet}$ is a sub-complex of ${\mathcal C}'_{\bullet}$ constructed
in \eqref{e10}, it follows that ${\mathbb H}^0({\mathcal D}_{\bullet})\, \subset\,
{\mathbb H}^0({\mathcal C}'_{\bullet})$. More precisely, from \eqref{e9a} we know that an
element
$$
v\, \in\, {\mathbb H}^0({\mathcal C}'_{\bullet})\, \subset\,
H^0({\mathcal C}'_0)\,=\, H^0(X,\, \text{ad}(E_G))
$$
lies in ${\mathbb H}^0({\mathcal D}_{\bullet})$ if and only if $v(x)\,\in\, {\mathcal H}_x$
for every $x\, \in\, D$. Now, from Lemma \ref{lemn1}
we know that ${\mathbb H}^0({\mathcal C}'_{\bullet})\,=\, Z({\mathfrak g})$. Combining these
it yields that ${\mathbb H}^0({\mathcal D}_{\bullet})\,=\, Z_{\mathfrak h}$. This proves
(1) in the proposition.

Using Serre duality and the isomorphism $\xi$ in \eqref{xi}, we have that
$$
{\mathbb H}^2({\mathcal D}_{\bullet})\,=\, {\mathbb H}^0({\mathcal D}^\vee_{\bullet})^*
\,=\, {\mathbb H}^0({\mathcal D}_{\bullet})^*\,=\, Z^*_{\mathfrak h}\, .
$$
This proves (2) in the proposition.

To prove (3), first note that from the long exact sequence of hypercohomologies associated
to the short exact sequence of complexes in \eqref{des} it follows immediately that
\begin{equation}\label{chif}
\dim {\mathbb H}^1({\mathcal D}_{\bullet})\,=\,
\dim {\mathbb H}^0({\mathcal D}_{\bullet})+ \dim {\mathbb H}^2({\mathcal D}_{\bullet})
- \chi(\text{ad}_\phi(E_G))+\chi(\text{ad}^n_\phi(E_G)\otimes
K_X(D))\, ;
\end{equation}
as before, $\chi$ denotes the Euler characteristic. Now, from \eqref{g1} we know that
$$
\chi(\text{ad}_\phi(E_G))\,=\, \chi(\text{ad}(E_G)(-D))+\sum_{x\in D} \dim {\mathcal H}_x\, .
$$
Hence $\chi(\text{ad}_\phi(E_G))\,=\, (\sum_{x\in D} \dim {\mathfrak h}_x)
-\dim G\cdot ({\rm genus}(X)-1+n)$.

Since $\text{ad}_\phi(E_G)^*\otimes K_X\,=\, \text{ad}^n_\phi(E_G)\otimes K_X(D)$ (see
Proposition \ref{prop1}(1)), using Serre duality, we have that
$$
\chi(\text{ad}^n_\phi(E_G)\otimes K_X(D))\,=\, \chi(\text{ad}_\phi(E_G)^*\otimes K_X)
\,=\,- \chi(\text{ad}_\phi(E_G))
$$
$$
=\, \dim G\cdot
({\rm genus}(X)-1+n)- \sum_{x\in D} \dim {\mathfrak h}_x\, .
$$
On the other hand, it was shown above that
$$\dim {\mathbb H}^2({\mathcal D}_{\bullet})\,=\, \dim {\mathbb H}^0({\mathcal D}_{\bullet})
\,=\, \dim Z_{\mathfrak h}\, .
$$
Combining these with \eqref{chif}, the third statement in the proposition follows.
\end{proof}

Lemma \ref{lem-2} and Proposition \ref{propdim} combine together to give the
following:

\begin{corollary}\label{cord}
Assume that ${\rm genus}(X)\, \geq\, 1$.
For any point $(E_G,\, \phi,\, \theta)\,\in\,
{\mathcal M}_{FH}(G)$,
$$
T_{(E_G,\phi,\theta)}{\mathcal M}_{FH}(G)\,=\, {\mathbb H}^1({\mathcal D}_{\bullet})\, ,
$$
where ${\mathcal D}_{\bullet}$ is the complex in \eqref{e13}.

The moduli space ${\mathcal M}_{FH}(G)$ is a smooth orbifold of dimension
$2(\dim Z_{\mathfrak h}+ \dim G\cdot ({\rm genus}(X)-1+n) -\sum_{x\in D}
\dim {\mathfrak h}_x)$.
\end{corollary}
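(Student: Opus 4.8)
The plan is to read off the statement from Lemma~\ref{lem-2} and Proposition~\ref{propdim}, the only genuine issue being smoothness. By Lemma~\ref{lem-2} the first-order deformations of the stable framed $G$--Higgs bundle $(E_G,\,\phi,\,\theta)$ are parametrized by ${\mathbb H}^1({\mathcal D}_{\bullet})$, so this is the Zariski tangent space to ${\mathcal M}_{FH}(G)$ at $(E_G,\,\phi,\,\theta)$; by Proposition~\ref{propdim}(1),(2) the infinitesimal automorphisms and the obstructions are ${\mathbb H}^0({\mathcal D}_{\bullet})\,=\, Z_{\mathfrak h}$ and ${\mathbb H}^2({\mathcal D}_{\bullet})\,=\, Z^*_{\mathfrak h}$. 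Hence, once the deformations are shown to be unobstructed, ${\mathcal M}_{FH}(G)$ is smooth at $(E_G,\,\phi,\,\theta)$ with tangent space ${\mathbb H}^1({\mathcal D}_{\bullet})$, of the dimension recorded in Proposition~\ref{propdim}(3); the orbifold (rather than manifold) structure reflects the automorphisms, the automorphism group of a stable framed $G$--Higgs bundle being $\big(\bigcap_{x\in D}H_x\big)\cap Z(G)$, a group of locally constant dimension $\dim Z_{\mathfrak h}$.

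The hard part will be the vanishing of the obstruction map, since, unlike the non-framed case where ${\mathbb H}^2({\mathcal C}'_{\bullet})\,=\,0$ by Lemma~\ref{lemn1}, here ${\mathbb H}^2({\mathcal D}_{\bullet})\,=\,Z^*_{\mathfrak h}$ can be nonzero. I would exploit that, by definition, $Z_{\mathfrak h}\,=\,\big(\bigcap_{x\in D}{\mathfrak h}_x\big)\cap Z({\mathfrak g})$ lies in ${\mathfrak h}_x$ for \emph{every} $x\,\in\, D$. Fix an ${\rm ad}(G)$--invariant, $\sigma$--orthogonal decomposition ${\mathfrak g}\,=\, Z_{\mathfrak h}\oplus{\mathfrak c}$ (possible since $\sigma$ is nondegenerate on $Z_{\mathfrak h}\,\subset\, Z({\mathfrak g})$). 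Since $Z_{\mathfrak h}\,\subset\,{\mathfrak h}_x$ one gets ${\mathfrak h}_x\,=\, Z_{\mathfrak h}\oplus({\mathfrak h}_x\cap{\mathfrak c})$ and ${\mathfrak h}^\perp_x\,\subset\,{\mathfrak c}$, so the subsheaves ${\rm ad}_\phi(E_G)$ and ${\rm ad}^n_\phi(E_G)$ and the Higgs field $\theta$ all respect the induced splitting ${\rm ad}(E_G)\,=\, ({\mathcal O}_X\otimes Z_{\mathfrak h})\oplus{\rm ad}_{\mathfrak c}(E_G)$; concretely the $Z_{\mathfrak h}$--part of ${\rm ad}_\phi(E_G)$ is ${\mathcal O}_X\otimes Z_{\mathfrak h}$ and that of ${\rm ad}^n_\phi(E_G)$ is ${\mathcal O}_X(-D)\otimes Z_{\mathfrak h}$. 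This decomposes the deformation complex as a direct sum
$$
{\mathcal D}_{\bullet}\,=\, {\mathcal D}^{Z}_{\bullet}\oplus {\mathcal D}^{\mathfrak c}_{\bullet}\, ,\qquad
{\mathcal D}^{Z}_{\bullet}\,:\ \ {\mathcal O}_X\otimes Z_{\mathfrak h}\, \stackrel{0}{\longrightarrow}\, K_X\otimes Z_{\mathfrak h}\, ,
$$
the differential of ${\mathcal D}^{Z}_{\bullet}$ vanishing because $Z_{\mathfrak h}$ is central, with ${\mathcal D}^{\mathfrak c}_{\bullet}$ the analogous framed complex built from the $\mathfrak c$--part.

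Granting this, I would finish as follows. First, ${\mathbb H}^2({\mathcal D}^{\mathfrak c}_{\bullet})\,=\,0$: the argument of Proposition~\ref{prop1} applies, with obvious modifications, to the $\mathfrak c$--part, so ${\mathcal D}^{\mathfrak c}_{\bullet}$ is Serre self-dual and ${\mathbb H}^2({\mathcal D}^{\mathfrak c}_{\bullet})\,=\,{\mathbb H}^0({\mathcal D}^{\mathfrak c}_{\bullet})^*$ is the dual of $\{v\,\in\, H^0(X,\, {\rm ad}_{\mathfrak c}(E_G))\ :\ v(x)\,\in\,{\mathfrak h}_x\cap{\mathfrak c}\ \forall x,\ [\theta,\,v]\,=\,0\}$; this vanishes because by stability $H^0(X,\,{\rm ad}(E_G))\,=\, Z({\mathfrak g})$, and then the framing forces such a $v$ into $Z({\mathfrak g})\cap{\mathfrak c}\cap\bigcap_{x}{\mathfrak h}_x\,=\, Z_{\mathfrak h}\cap{\mathfrak c}\,=\, 0$. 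Hence the $\mathfrak c$--part of the deformation is unobstructed. Second, ${\mathcal D}^{Z}_{\bullet}$ governs an \emph{abelian} deformation problem (its differential and the bracket on this central summand both vanish), so the associated deformation functor is formally smooth — it is just the affine space ${\mathbb H}^1({\mathcal D}^{Z}_{\bullet})$, or equivalently the deformation functor of a $Z_{\mathfrak h}\otimes K_X$--valued Higgs field on the fixed central part of $E_G$, whose moduli is a smooth cotangent-type space over a torus moduli. As ${\mathcal D}_{\bullet}$ splits as the direct sum of these two, $(E_G,\,\phi,\,\theta)$ has unobstructed deformations, and the corollary follows together with Proposition~\ref{propdim}(3) and the automorphism description above. (Alternatively, if one prefers to bypass this argument, smoothness at stable points can be imported from the GIT constructions of moduli of framed Higgs bundles as in \cite{Si1,Si2,Nitin,Ma}, retaining only the tangent-space identification of Lemma~\ref{lem-2} and the dimension count of Proposition~\ref{propdim}.)
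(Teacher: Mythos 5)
Your identification of the tangent space via Lemma~\ref{lem-2} and the dimension count via Proposition~\ref{propdim} is exactly what the paper does: its entire proof of Corollary~\ref{cord} is the one line ``Lemma~\ref{lem-2} and Proposition~\ref{propdim} combine together to give the following.'' Where you genuinely diverge is in taking seriously the smoothness assertion: you correctly observe that, unlike the unframed case where ${\mathbb H}^2({\mathcal C}'_{\bullet})\,=\,0$ (Remark~\ref{remn1}), here ${\mathbb H}^2({\mathcal D}_{\bullet})\,=\,Z^*_{\mathfrak h}$ need not vanish, so unobstructedness requires an argument the paper does not supply. Your splitting ${\mathcal D}_{\bullet}\,=\,{\mathcal D}^{Z}_{\bullet}\oplus{\mathcal D}^{\mathfrak c}_{\bullet}$, with the central summand governing a linear (hence unobstructed) problem and the complement having vanishing ${\mathbb H}^2$ by the stability argument of Lemma~\ref{lemn1}, is a sound way to close this gap; note that it is essentially the deformation-theoretic shadow of Proposition~\ref{prop phi is torsor}, which exhibits ${\mathcal M}_{FH}(G)$ as a torsor with smooth homogeneous fibers over the smooth orbifold ${\mathcal M}_{H}(G)$ and gives smoothness more directly. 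One technical caveat: your orthogonal decomposition ${\mathfrak g}\,=\,Z_{\mathfrak h}\oplus{\mathfrak c}$ presumes that the restriction of $\sigma$ to the subspace $Z_{\mathfrak h}\,\subset\,Z({\mathfrak g})$ is nondegenerate, which is not automatic for a complex symmetric form on a proper subspace (the form $\sigma'$ on $Z({\mathfrak g})$ is fixed at the outset and enters the definition of ${\mathcal H}^\perp_x$, so one cannot simply re-choose it); this either needs to be imposed on $\sigma'$ or circumvented by taking ${\mathfrak c}$ to be any ${\rm ad}(G)$--invariant complement containing $[{\mathfrak g},{\mathfrak g}]$ and checking the compatibility with ${\mathcal H}_x^\perp$ by hand. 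With that repair, your argument is a legitimate strengthening of the paper's terse justification.
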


Henceforth, we would always assume that ${\rm genus}(X)\, \geq\, 1$.

\subsection{Symplectic form on the moduli space}

Consider the symplectic form $\Phi_{(E_G, \phi, \theta)}$ in Lemma \ref{lem2}. In view
of Corollary \ref{cord}, this pointwise 
construction defines a holomorphic two-form on the moduli space ${\mathcal M}_{FH}(G)$.
This holomorphic two-form on ${\mathcal M}_{FH}(G)$ will be denoted by $\Phi$.

\begin{theorem}\label{thm1}
The above holomorphic form $\Phi$ on ${\mathcal M}_{FH}(G)$ is symplectic.
\end{theorem}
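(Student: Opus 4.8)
The plan is to verify the two defining properties of a symplectic form: that $\Phi$ is pointwise nondegenerate and that it is closed. Nondegeneracy at a point $(E_G,\, \phi,\, \theta)\, \in\, \mathcal{M}_{FH}(G)$ is essentially built into the construction: by Corollary \ref{cord} the tangent space is ${\mathbb H}^1({\mathcal D}_{\bullet})$, and the form $\Phi_{(E_G,\phi,\theta)}$ of \eqref{e15} is obtained by transporting the Serre-duality isomorphism ${\mathbb H}^1({\mathcal D}^\vee_{\bullet})\,=\, {\mathbb H}^1({\mathcal D}_{\bullet})^*$ along the isomorphism $\widetilde{\Phi}\,=\,\xi_*$ coming from $\xi\,:\,{\mathcal D}_{\bullet}\,\xrightarrow{\sim}\,{\mathcal D}^\vee_{\bullet}$ in \eqref{xi}. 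Since both $\xi_*$ and the Serre-duality identification are isomorphisms, $\Phi_{(E_G,\phi,\theta)}\,:\,{\mathbb H}^1({\mathcal D}_{\bullet})\,\to\,{\mathbb H}^1({\mathcal D}_{\bullet})^*$ is an isomorphism, hence nondegenerate; it is skew-symmetric as already noted after \eqref{e15}. I would spell out that $\xi$ really is an isomorphism of complexes — this is exactly the content of Proposition \ref{prop1}, which identifies ${\mathcal D}^\vee_0\,=\,\text{ad}_\phi(E_G)$ and ${\mathcal D}^\vee_1\,=\,\text{ad}^n_\phi(E_G)\otimes K_X(D)$ compatibly with the differentials — so the nondegeneracy is immediate once the hypercohomology computation in Corollary \ref{cord} is in hand.

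The substantive point is closedness, $d\Phi\,=\,0$. Here I would follow the standard strategy used for Higgs-bundle moduli (Hitchin, Biswas--Ramanan, Markman): realize $\mathcal{M}_{FH}(G)$ locally as a quotient and exhibit $\Phi$ as descending from a form on an infinite-dimensional affine space where closedness is manifest, or alternatively use the ``deformation-theoretic'' description of $\Phi$ via the cup-product pairing in \eqref{e17} together with $\Gamma$ in \eqref{e16}. The second route is cleaner: the pairing in \eqref{e17} is cup product followed by the trace map $\Gamma_*$, and cup product on hypercohomology is induced by a genuine morphism of complexes $\widehat{\mathcal D}_\bullet\,=\,{\mathcal D}_\bullet\otimes{\mathcal D}_\bullet\,\to\,K_X[-1]$. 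The key fact I would invoke is that a pairing of this shape — defined uniformly in families by a morphism of complexes of sheaves — automatically gives a closed form on the moduli space, because the exterior derivative can be computed on the cochain level (via a Čech or Dolbeault model), where the failure of closedness is governed by the failure of $\Gamma$ to be a chain map or by the non-associativity/non-symmetry of the cup product, both of which vanish by the identity $\gamma\circ(f^0_\theta\otimes{\rm Id}+{\rm Id}\otimes f^0_\theta)\,=\,0$ established just before \eqref{e16} and by the $G$--invariance identity \eqref{gi}. Concretely, I would pass to the Dolbeault model of ${\mathbb H}^1({\mathcal D}_\bullet)$: represent a tangent vector by a pair $(\beta,\, s)$ with $\beta\,\in\,A^{0,1}(\text{ad}_\phi(E_G))$ and $s\,\in\,A^{0,0}(\text{ad}^n_\phi(E_G)\otimes K_X(D))$ satisfying $\overline\partial s\,=\,f^0_\theta(\beta)$, write $\Phi$ as an explicit integral over $X$ of $\widehat\sigma$-pairings of such representatives, and then show $d\Phi\,=\,0$ by a Cartan-formula/Stokes computation: the variation of the representative in a direction $(\beta',\,s')$ contributes terms that cancel in the alternating sum defining $d\Phi$ precisely because $\widehat\sigma$ is symmetric and ad-invariant and because the constraint $\overline\partial s\,=\,f^0_\theta(\beta)$ is linear.

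The main obstacle I anticipate is the bookkeeping at the points of $D$: unlike the classical Higgs case, the two complexes ${\mathcal D}_0\,=\,\text{ad}_\phi(E_G)$ and ${\mathcal D}_1\,=\,\text{ad}^n_\phi(E_G)\otimes K_X(D)$ involve the twisted sheaves $\text{ad}_\phi(E_G)$ and $\text{ad}^n_\phi(E_G)$, which differ from $\text{ad}(E_G)$ exactly over $D$ by the subspaces ${\mathcal H}_x$ and ${\mathcal H}^\perp_x$. One must check that the Dolbeault representatives can be chosen compatibly with these pointwise conditions (i.e. that $\beta$ respects $\text{ad}_\phi$ and $s$ respects $\text{ad}^n_\phi$ near $D$), and that the pairing $\widehat\sigma(a\otimes b)$ of such representatives lands in $A^{1,1}(X)$ with no residue contribution at $D$ — which is the sheaf-level statement ``$\widehat\sigma\otimes{\rm Id}_{{\mathcal O}_X(D)}$ restricted to $\text{ad}_\phi\otimes\text{ad}^n_\phi\otimes{\mathcal O}_X(D)$ lands in ${\mathcal O}_X$'' proved in Proposition \ref{prop1} via $[{\mathcal H}_x,\,{\mathcal H}^\perp_x]\,\subset\,{\mathcal H}^\perp_x$. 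Once one is satisfied that all cochain-level manipulations stay inside the correct twisted sheaves, the closedness argument is formally identical to the one in \cite{BLP} for $G\,=\,\text{GL}(r,{\mathbb C})$, $H_x\,=\,{\rm I}_{r\times r}$, and to the Higgs-bundle computations in \cite{BR}, \cite{Bot}, \cite{Ma}; I would therefore present the $D$-twist compatibility carefully and then refer to those sources for the remaining routine verification that the cup-product-plus-trace form is closed.
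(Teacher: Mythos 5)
Your nondegeneracy argument matches the paper's: it is exactly Lemma \ref{lem2} combined with Corollary \ref{cord}, and nothing more is needed there. The divergence is in the closedness argument, and here there is a soft but genuine gap. The ``key fact'' you invoke --- that a fiberwise pairing defined uniformly by a morphism of complexes of sheaves \emph{automatically} yields a closed form on the moduli space --- is not a valid general principle. The identity $\gamma\circ(f^0_\theta\otimes{\rm Id}+{\rm Id}\otimes f^0_\theta)\,=\,0$ guarantees that $\Gamma$ is a chain map at a \emph{fixed} point $(E_G,\phi,\theta)$, hence that $\Phi$ is a well-defined $2$-form; it says nothing about how $\Phi$ varies as the point moves in moduli, which is what $d\Phi$ measures. (Compare the Poisson structure on ${\mathcal M}_H(G)$: it too is defined fiberwise by a morphism of complexes, yet the Jacobi identity is a theorem of Bottacin requiring separate proof.) What actually closes the argument in the paper is the construction of an explicit global holomorphic $1$-form $\Psi$ on ${\mathcal M}_{FH}(G)$ --- defined at $(E_G,\phi,\theta)$ by $w\mapsto\theta(\beta'_4(w))$, using the Serre-duality identification $H^1(X,{\rm ad}_\phi(E_G))^*\,=\,H^0(X,{\rm ad}^n_\phi(E_G)\otimes K_X(D))$ from Proposition \ref{prop1} --- together with the identity $\Phi\,=\,d\Psi$, so that $\Phi$ is in fact exact. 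This is the analogue of the Liouville form, and it is the ingredient missing from your write-up.

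That said, your fallback Dolbeault computation is salvageable and is essentially the paper's Section \ref{se6}: on the affine space ${\mathcal W}\,=\,{\mathcal V}^{0,1}\oplus{\mathcal V}^{1,0}$ the $2$-form $\Phi'_0(v,w)((v_1,w_1),(v_2,w_2))\,=\,\int_X(\widehat\sigma_0(v_2,w_1)-\widehat\sigma_0(v_1,w_2))$ has constant coefficients, hence is manifestly closed (indeed $\Phi'_0\,=\,d\Psi'_0$ for the tautological $1$-form $\Psi'_0$), and one checks it descends under the gauge group $\mathcal G$ preserving the framing. The point is that closedness is obtained \emph{upstairs} from exactness of a constant-coefficient form, not from a cochain-level cancellation at a single point; if you rewrite your Stokes/Cartan computation with that as the organizing statement, the argument goes through. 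Your attention to the behaviour at $D$ --- that $\widehat\sigma$ pairs ${\rm ad}_\phi$ against ${\rm ad}^n_\phi\otimes{\mathcal O}_X(D)$ into ${\mathcal O}_X$ with no residue, via ${\mathcal H}_x\perp{\mathcal H}^\perp_x$ --- is exactly the check the paper also makes in Section \ref{se6}, and is correctly identified.
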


\begin{proof}
The form $\Phi$ is fiberwise nondegenerate by Lemma \ref{lem2}. So it suffices to
show that $\Phi$ is closed. 

Take any point $(E_G,\, \phi,\, \theta)\,\in\, {\mathcal M}_{FH}(G)$.
Corollary \ref{cord} says that
$$
T_{(E_G, \phi, \theta)} {\mathcal M}_{FH}(G)\,=\, {\mathbb H}^1({\mathcal D}_{\bullet})\, .
$$
Now consider the homomorphism $$\beta'_4\, :\, {\mathbb H}^1({\mathcal D}_{\bullet})\,
\longrightarrow\, H^1(X,\, \text{ad}_\phi(E_G))$$ in \eqref{g3}. In view of the first statement in
Proposition \ref{prop1}, Serre duality gives that
$$
H^1(X,\, \text{ad}_\phi(E_G))^*\,=\, H^0(X,\, \text{ad}_\phi(E_G)^*\otimes K_X)
\,=\, H^0(X,\, \text{ad}^n_\phi(E_G)\otimes K_X(D))\, .
$$
Now, since $\theta\, \in\, H^0(X,\, \text{ad}^n_\phi(E_G)\otimes K_X(D))$, we have the homomorphism
$$
\Psi_{(E_G, \phi, \theta)}\,:\, T_{(E_G, \phi, \theta)} {\mathcal M}_{FH}(G)\,=\,
{\mathbb H}^1({\mathcal D}_{\bullet})\, \longrightarrow\, \mathbb C\, ,\ \
w\, \longmapsto\, \theta(\beta'_4(w))\, .
$$
This pointwise construction of $\Psi_{(E_G, \phi, \theta)}$ produces a holomorphic
$1$-form on the moduli space ${\mathcal M}_{FH}(G)$. This holomorphic $1$-form on
${\mathcal M}_{FH}(G)$ will be denoted by $\Psi$.

The holomorphic $2$-form $d\Psi$ coincides with $\Phi$. Hence the form $\Phi$ is closed.
\end{proof}

\subsection{A Poisson map}

Take any $(E_G,\, \theta)\, \in\, {\mathcal M}_{H}(G)$. From Corollary \ref{cord0}
and \eqref{sdh} we know that
$$
T_{(E_G, \theta)}{\mathcal M}_{H}(G)\,=\, {\mathbb H}^1({\mathcal C}'_{\bullet})\ \ \text{ and }\ \
T^*_{(E_G, \theta)}{\mathcal M}_{H}(G)\,=\, {\mathbb H}^1(({\mathcal C}')^\vee_{\bullet})\, .
$$

The pointwise construction of the homomorphism $P_{(E_G, \theta)}$ in Proposition
\ref{prop2} produces a homomorphism
\begin{equation}\label{P}
P\, :\, T^*{\mathcal M}_{H}(G)\, \longrightarrow\, T{\mathcal M}_{H}(G)\, .
\end{equation}
This $P$ is a Poisson form on the moduli space ${\mathcal M}_{H}(G)$ \cite[p.~417, Theorem 4.6.3]{Bot}.

\begin{theorem}\label{thm2}
The forgetful function $\varphi$ in \eqref{vp} is a Poisson map.
\end{theorem}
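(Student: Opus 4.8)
The morphism $\varphi$ is Poisson precisely when, for every point $(E_G,\,\phi,\,\theta)\,\in\,{\mathcal M}_{FH}(G)$ with image $(E_G,\,\theta)\,\in\,{\mathcal M}_{H}(G)$, the two Poisson bivectors are $\varphi$--related, i.e.
$$
d\varphi\,\circ\,\Phi_{(E_G,\phi,\theta)}^{-1}\,\circ\,(d\varphi)^*\,=\,P_{(E_G,\theta)}\ \colon\ T^*_{(E_G,\theta)}{\mathcal M}_{H}(G)\,\longrightarrow\,T_{(E_G,\theta)}{\mathcal M}_{H}(G)\, ,
$$
where $\Phi_{(E_G,\phi,\theta)}^{-1}$ is the inverse of the nondegenerate form $\Phi_{(E_G,\phi,\theta)}$ in \eqref{e15} (whose family version is the symplectic form $\Phi$ of Theorem \ref{thm1}) and $P_{(E_G,\theta)}$ is the homomorphism of Proposition \ref{prop2}, with family version $P$ in \eqref{P}. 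By Corollary \ref{cord}, Corollary \ref{cord0} and Proposition \ref{prop2}, the tangent spaces here are ${\mathbb H}^1({\mathcal D}_{\bullet})$ and ${\mathbb H}^1({\mathcal C}'_{\bullet})$, and $d\varphi$ is the map $\iota_*\colon {\mathbb H}^1({\mathcal D}_{\bullet})\,\longrightarrow\,{\mathbb H}^1({\mathcal C}'_{\bullet})$ induced by the inclusion of complexes $\iota\colon {\mathcal D}_{\bullet}\,\hookrightarrow\,{\mathcal C}'_{\bullet}$ (this is ``forget the framing''; for $H_x\,=\,e$ it is the homomorphism $\beta'_2$ of \eqref{ahb}). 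The plan is to reduce the displayed equality to an equality of morphisms between the four complexes ${\mathcal C}'_{\bullet}$, $({\mathcal C}')^\vee_{\bullet}$, ${\mathcal D}_{\bullet}$ and ${\mathcal D}^\vee_{\bullet}$.

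First I would unwind the three maps on the left. By functoriality of Serre duality, $(d\varphi)^*\,=\,(\iota_*)^*$ corresponds, under the isomorphism \eqref{sdh} and its ${\mathcal D}_{\bullet}$--analogue ${\mathbb H}^1({\mathcal D}^\vee_{\bullet})\,=\,{\mathbb H}^1({\mathcal D}_{\bullet})^*$, to the map $\iota^\vee_*\colon {\mathbb H}^1(({\mathcal C}')^\vee_{\bullet})\,\longrightarrow\,{\mathbb H}^1({\mathcal D}^\vee_{\bullet})$ induced by the Serre dual morphism $\iota^\vee\colon ({\mathcal C}')^\vee_{\bullet}\,\longrightarrow\,{\mathcal D}^\vee_{\bullet}$. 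By \eqref{xi}, \eqref{wtP} and \eqref{e15}, $\Phi_{(E_G,\phi,\theta)}$ is $\xi_*$ followed by the identification ${\mathbb H}^1({\mathcal D}^\vee_{\bullet})\,=\,{\mathbb H}^1({\mathcal D}_{\bullet})^*$; and $\xi$, as recorded just before \eqref{xi}, is the canonical identification of ${\mathcal D}^\vee_{\bullet}$ with ${\mathcal D}_{\bullet}$. Hence, after Serre duality, $\Phi_{(E_G,\phi,\theta)}^{-1}$ becomes simply this identification ${\mathcal D}^\vee_{\bullet}\,=\,{\mathcal D}_{\bullet}$. Finally, the proof of Proposition \ref{prop2} shows that $P_{(E_G,\theta)}$ is ${\mathcal R}_*$ preceded by the isomorphism \eqref{sdh}, where ${\mathcal R}\colon ({\mathcal C}')^\vee_{\bullet}\,\longrightarrow\,{\mathcal C}'_{\bullet}$ is the morphism of complexes built there out of the natural inclusions ${\mathcal O}_X(-D)\,\hookrightarrow\,{\mathcal O}_X$ and $K_X\,\hookrightarrow\,K_X(D)$. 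Composing these descriptions, the instance of \eqref{sdh} common to both sides can be stripped off, and the displayed equality reduces to
$$
\iota\,\circ\,\xi^{-1}\,\circ\,\iota^\vee\,=\,{\mathcal R}\ \colon\ ({\mathcal C}')^\vee_{\bullet}\,\longrightarrow\,{\mathcal C}'_{\bullet}
$$
as morphisms of complexes --- or, what is enough, that the two sides induce the same homomorphism on ${\mathbb H}^1$, i.e.\ are chain homotopic.

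The remaining task, which I expect to be the main obstacle, is to verify this equality term by term; it is entirely a matter of keeping track of the identifications supplied by $\widehat\sigma$ in \eqref{e6} and by Proposition \ref{prop1}. In degree $0$, $\iota^\vee$ is the Serre dual of $\iota_1\colon {\rm ad}^n_\phi(E_G)\otimes K_X(D)\,\hookrightarrow\,{\rm ad}(E_G)\otimes K_X(D)$; identifying $({\mathcal C}')^\vee_0\,=\,{\rm ad}(E_G)(-D)$ and ${\mathcal D}^\vee_0\,=\,{\rm ad}_\phi(E_G)$ via Proposition \ref{prop1} and $\widehat\sigma$, this Serre dual is the inclusion $\zeta$ of \eqref{g1}, and composing with $\iota_0\colon {\rm ad}_\phi(E_G)\,\hookrightarrow\,{\rm ad}(E_G)$ recovers the natural inclusion ${\rm ad}(E_G)(-D)\,\hookrightarrow\,{\rm ad}(E_G)$, which is ${\mathcal R}$ in degree $0$. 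In degree $1$, $\iota^\vee$ is the Serre dual of $\iota_0$, which under $\widehat\sigma$ and Proposition \ref{prop1}(1) is precisely the homomorphism ${\rm ad}(E_G)\,=\,{\rm ad}(E_G)^*\,\hookrightarrow\,{\rm ad}_\phi(E_G)^*\,=\,{\rm ad}^n_\phi(E_G)\otimes{\mathcal O}_X(D)$ of Remark \ref{remn2}, twisted by $K_X$; composing with $\iota_1$, the chain ${\rm ad}(E_G)\,\hookrightarrow\,{\rm ad}^n_\phi(E_G)\otimes{\mathcal O}_X(D)\,\hookrightarrow\,{\rm ad}(E_G)\otimes{\mathcal O}_X(D)$ collapses to the natural twisting inclusion, which after tensoring by $K_X$ is ${\mathcal R}$ in degree $1$. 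The delicate point is exactly this last collapse: one must check that the $\sigma$--canonical identifications of Proposition \ref{prop1} turn the Serre duals of ${\rm ad}_\phi(E_G)\,\hookrightarrow\,{\rm ad}(E_G)$ and of ${\rm ad}^n_\phi(E_G)\,\hookrightarrow\,{\rm ad}(E_G)$ into a mutually dual pair, so that $\iota\circ\xi^{-1}\circ\iota^\vee$ really equals ${\mathcal R}$. Granting this, applying ${\mathbb H}^1(-)$ yields the pointwise bivector identity; since $\Phi$, $P$, $\iota$ and ${\mathcal R}$ vary holomorphically over ${\mathcal M}_{FH}(G)$, it holds on the whole moduli space, and therefore $\varphi$ is a Poisson map.
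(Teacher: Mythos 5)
Your proposal is correct and follows essentially the same route as the paper: it reduces the Poisson condition to the pointwise identity $d\varphi\circ\Phi_{(E_G,\phi,\theta)}^{-1}\circ(d\varphi)^*\,=\,P_{(E_G,\theta)}$ and verifies it by identifying all maps with hypercohomology maps induced by morphisms of the complexes ${\mathcal D}_{\bullet}$, ${\mathcal C}'_{\bullet}$ and $({\mathcal C}')^\vee_{\bullet}$, with the degree-$0$ and degree-$1$ components recognized as $\zeta$ from \eqref{g1} and the map of Remark \ref{remn2}, exactly as in the paper's proof. The ``delicate point'' you isolate is precisely what the paper disposes of by citing \eqref{g1} and Remark \ref{remn2}, so there is no gap.
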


\begin{proof}
Take any $\mathbf{z}\,:=\, (E_G,\, \phi,\, \theta)\,\in\, {\mathcal M}_{FH}(G)$.
Let $$\mathbf{y}\,:=\, \varphi(\mathbf{z})\,=\, (E_G,\, \theta) \,\in\, {\mathcal M}_{H}(G)$$
be its image under $\varphi$. Consider the differential of the map $\varphi$
\begin{equation}\label{df}
d\varphi(\mathbf{z})\, :\, T_{\mathbf{z}}{\mathcal M}_{FH}(G)\,\longrightarrow\,
T_{\mathbf{y}}{\mathcal M}_{H}(G)
\end{equation}
at the point $\mathbf{z}\,\in\, {\mathcal M}_{FH}(G)$. Let
\begin{equation}\label{dfd}
d\varphi(\mathbf{z})^*\, :\, T^*_{\mathbf{y}}{\mathcal M}_{H}(G)\,\longrightarrow\,
T^*_{\mathbf{z}}{\mathcal M}_{FH}(G)
\end{equation}
be the dual homomorphism.

In view of Corollary \ref{cord}, the isomorphism $(\Phi_{(E_G, \phi, \theta)})^{-1}$ in
\eqref{e15} is a homomorphism
\begin{equation}\label{Pi}
(\Phi_{(E_G, \phi, \theta)})^{-1}\,:\, T^*_{\mathbf{z}}{\mathcal M}_{FH}(G)\,
\stackrel{\sim}{\longrightarrow}\,
T_{\mathbf{z}}{\mathcal M}_{FH}(G)\, .
\end{equation}
Note that the homomorphism $(\Phi_{(E_G, \phi, \theta)})^{-1}$
in \eqref{Pi} defines the Poisson structure on ${\mathcal M}_{FH}(G)$
associated to the symplectic form $\Phi$ (see Theorem \ref{thm1}).

To prove the theorem, we need to show the following: For
every $w\, \in\, T^*_{\mathbf{y}}{\mathcal M}_{H}(G)$,
\begin{equation}\label{Sh}
d\varphi(\mathbf{z})\circ (\Phi_{(E_G, \phi, \theta)})^{-1}\circ d\varphi(\mathbf{z})^*(w)\,=\,
P(w)\, ,
\end{equation}
where $P$, $(\Phi_{(E_G, \phi, \theta)})^{-1}$, $d\varphi(\mathbf{z})$ and
$d\varphi(\mathbf{z})^*$ are the homomorphisms constructed in
\eqref{P}, \eqref{Pi}, \eqref{df} and \eqref{dfd} respectively, or in other words, the following
diagram of homomorphisms is commutative
$$
\begin{matrix}
T^*_{\mathbf{y}}{\mathcal M}_{H}(G) &\stackrel{P}{\longrightarrow} & T_{\mathbf{y}}{\mathcal M}_{H}(G)\\
d\varphi(\mathbf{z})^*\Big\downarrow\,\,\,\,\,\,\,\,\,\,\,\,\,\,\, & & d\varphi(\mathbf{z})
\Big\uparrow\,\,\,\,\,\,\,\,\\
T^*_{\mathbf{z}}{\mathcal M}_{FH}(G) &\stackrel{(\Phi_{(E_G, \phi, \theta)})^{-1}}{\longrightarrow}&
T_{\mathbf{z}}{\mathcal M}_{FH}(G)
\end{matrix}
$$
(see \cite[Section 4]{BLP}).

First consider the homomorphism $d\varphi(\mathbf{z})$ in \eqref{df}. Recall from
Corollary \ref{cord} and Corollary \ref{cord0}
respectively that $T_{\mathbf{z}} {\mathcal M}_{FH}(G)\,=\, {\mathbb H}^1({\mathcal D}_{\bullet})$
and $T_{\mathbf{y}} {\mathcal M}_{H}(G)\,=\, {\mathbb H}^1({\mathcal C}'_{\bullet})$. Now from the definition of
the forgetful map $\varphi$ in \eqref{vp} it follows immediately that
$d\varphi(\mathbf{z})$ coincides with the homomorphism of hypercohomologies
${\mathbb H}^1({\mathcal D}_{\bullet})\, \longrightarrow\, {\mathbb H}^1({\mathcal C}'_{\bullet})$ corresponding to
the following homomorphism of complexes:
$$
\begin{matrix}
{\mathcal D}_{\bullet} & : &
\text{ad}_\phi(E_G) & \longrightarrow &\text{ad}^n_\phi(E_G)\otimes K_X(D)\\
\Big\downarrow && \Big\downarrow && \Big\downarrow\\
{\mathcal C}'_{\bullet} & : & \text{ad}(E_G) & \longrightarrow & \text{ad}(E_G)\otimes K_X(D)
\end{matrix}
$$
where the homomorphisms $$\text{ad}_\phi(E_G)\, \longrightarrow\, \text{ad}(E_G)\ \ \text{ and }\ \
\text{ad}^n_\phi(E_G)\otimes K_X(D)\, \longrightarrow\,\text{ad}(E_G)\otimes K_X(D)$$ are the natural inclusions
(see \eqref{e9a} and \eqref{e12}).

Next consider the homomorphism $d\varphi(\mathbf{z})^*$ in \eqref{dfd}. Using
Corollary \ref{cord} and the isomorphism 
$\Phi_{(E_G, \phi, \theta)}$ in \eqref{e15} it follows that 
$T^*_{\mathbf{z}} {\mathcal M}_{FH}(G)\,=\, {\mathbb H}^1({\mathcal D}_{\bullet})$. On the 
other hand, we have $T^*_{\mathbf{y}}{\mathcal M}_{H}(G)\,=\, {\mathbb H}^1(({\mathcal 
C}')^\vee_{\bullet})$ (see Corollary \ref{cord0} and \eqref{sdh}); also, the complex $({\mathcal 
C}')^\vee_{\bullet}$ is realized as the complex in \eqref{ad}. Using these, the 
homomorphism $d\varphi(\mathbf{z})^*$ coincides with the homomorphism of hypercohomologies 
${\mathbb H}^1(({\mathcal C}')^\vee_{\bullet})\, \longrightarrow\, {\mathbb H}^1({\mathcal 
D}_{\bullet})$ corresponding to the following homomorphism of complexes:
$$
\begin{matrix}
({\mathcal C}')^\vee_{\bullet} & : &
\text{ad}(E_G)\otimes{\mathcal O}_X(-D) & \longrightarrow &\text{ad}(E_G)\otimes K_X\\
\Big\downarrow && \Big\downarrow && \Big\downarrow\\
{\mathcal D}_{\bullet} & : &
\text{ad}_\phi(E_G) & \longrightarrow &\text{ad}^n_\phi(E_G)\otimes K_X(D)
\end{matrix}
$$
where the homomorphisms $$\text{ad}(E_G)\otimes{\mathcal O}_X(-D)\, \longrightarrow\,
\text{ad}_\phi(E_G)\ \ \text{ and }\ \ \text{ad}(E_G)\otimes K_X\, \longrightarrow\, \text{ad}^n_\phi(E_G)\otimes K_X(D)$$
are the natural inclusions; see \eqref{g1} and Remark \ref{remn2}.

Consequently, the homomorphism $d\varphi(\mathbf{z})\circ (\Phi_{(E_G, \phi, \theta)})^{-1}\circ d\varphi(\mathbf{z})^*$
in \eqref{Sh} coincides with the homomorphism of hypercohomologies
\begin{equation}\label{eta}
\eta\, :\, {\mathbb H}^1(({\mathcal C}')^\vee_{\bullet})\, \longrightarrow\,
{\mathbb H}^1({\mathcal C}'_{\bullet})
\end{equation}
corresponding to
the following homomorphism of complexes:
$$
\begin{matrix}
({\mathcal C}')^\vee_{\bullet} & : &
\text{ad}(E_G)\otimes{\mathcal O}_X(-D) & \longrightarrow &\text{ad}(E_G)\otimes K_X\\
\Big\downarrow && \Big\downarrow && \Big\downarrow\\
{\mathcal C}'_{\bullet} & : & \text{ad}(E_G) & \longrightarrow & \text{ad}(E_G)\otimes K_X(D)
\end{matrix}
$$
where the homomorphisms $$\text{ad}(E_G)\otimes{\mathcal O}_X(-D)\, \longrightarrow\,
\text{ad}(E_G)\ \ \text{ and }\ \ \text{ad}(E_G)\otimes K_X\, \longrightarrow\, \text{ad}(E_G)\otimes K_X(D)$$
are the natural inclusions. But the homomorphism $\eta$ in \eqref{eta} evidently
coincides with the homomorphism $P_{(E_G, \theta)}$ constructed in Proposition \ref{prop2}.
Hence \eqref{Sh} is proved. As noted before, this completes the proof of the theorem.
\end{proof}

\section{The framework of Atiyah--Bott}\label{se6}

In this section we sketch an alternative construction of the symplectic form $\Phi$
in Theorem \ref{thm1} using the framework developed by Atiyah and Bott in \cite{AtBo}. This
framework was also employed by Hitchin in \cite{Hi0}.

Take any element $\nu\, \in\, \pi_1(G)$.
Fix a $C^\infty$ principal $G$--bundle $E^0_G$ on $X$ of topological type $\nu$.
Fix Zariski closed subgroups $H_x\, \subsetneq\, G$ for all $x\, \in\, D$.
\begin{enumerate}
\item{} Fix a framing $\phi_0$ on $E^0_G$ of type $\{H_x\}_{x\in D}$, so $\phi_0(x)$
is an element of the quotient space
$(E^0_G)_x/H_x$ for every $x\, \in\, D$.

\item{} The space of all holomorphic structures on the principal $G$--bundle $E^0_G$
is an affine space for the vector space $C^\infty(X,\, \text{ad}(E^0_G)\otimes\Omega^{0,1}_X)$.
Fix a holomorphic structure on the
$C^\infty$ principal $G$--bundle $E^0_G$; the resulting holomorphic principal $G$--bundle
will be denoted by ${\mathcal E}^0_G$.

\item Fix a Higgs field $\theta_0$ on the framed holomorphic
principal $G$--bundle $({\mathcal E}^0_G,\, \phi_0)$.
\end{enumerate}
As done in \eqref{e6}, let
\begin{equation}\label{s0}
\widehat{\sigma}_0\, \in\, C^\infty(X,\, \text{Sym}^2(\text{ad}(E^0_G)))
\end{equation}
be the fiberwise nondegenerate symmetric bilinear form defined by $\sigma$ in \eqref{e5};
the subscript ``$0$'' in ``$\widehat{\sigma}_0$'' is to emphasize the fact that this pairing
is on a fixed vector bundle $\text{ad}(E^0_G)$.

Recall the constructions of $\text{ad}_\phi(E_G)$ and $\text{ad}^n_\phi(E_G)$, done in
\eqref{e9a} and \eqref{e12} respectively, for a framed principal $G$--bundle $(E_G,\, \phi)$.
Substituting the above framed principal $G$--bundle $({\mathcal E}^0_G,\, \phi_0)$ in place
of $(E_G,\, \phi)$ in the constructions done in \eqref{e9a} and \eqref{e12}, we get holomorphic
vector bundles $\text{ad}_{\phi_0}({\mathcal E}^0_G)$ and $\text{ad}^n_{\phi_0}({\mathcal E}^0_G)$
respectively.

Let ${\mathcal V}^{0,1}$ denote the space of all $C^\infty$ sections of the vector bundle
$$\text{ad}_{\phi_0}({\mathcal E}^0_G)\otimes \overline{K}_X\,=\,
\text{ad}_{\phi_0}({\mathcal E}^0_G)\otimes \Omega^{0,1}_X\, .$$
The space of all $C^\infty$ sections of the vector bundle
$\text{ad}^n_{\phi_0}({\mathcal E}^0_G)\otimes K_X(D)$ will be denoted by ${\mathcal V}^{1,0}$.
Now construct the direct sum of vector spaces
\begin{equation}\label{cw}
{\mathcal W} \, :=\, {\mathcal V}^{0,1}\oplus {\mathcal V}^{1,0}\, .
\end{equation}
Given any $v\, \in\, {\mathcal V}^{0,1}$, we get a framed holomorphic principal $G$--bundle
$({\mathcal E}^v_G,\, \phi_v)$ on $X$. To clarify, the underlying $C^\infty$ framed principal
$G$--bundle for $({\mathcal E}^v_G,\, \phi_v)$ is $(E^0_G,\, \phi_0)$, and the almost complex
structures of ${\mathcal E}^v_G$ and ${\mathcal E}^0_G$ differ by $v$; as mentioned before, the
space of all holomorphic structures on $E^0_G$ is an affine space for $C^\infty(X,\,
\text{ad}(E^0_G)\otimes\Omega^{0,1}_X)$. It may be mentioned that
these conditions uniquely determine ${\mathcal E}^v_G$. Also, note that the framing
$\phi_v$ coincides with $\phi_0$ using the $C^\infty$ identification
between $E^0_G$ and ${\mathcal E}^v_G$. Now
consider the Dolbeault operator for the holomorphic vector bundle $\text{ad}({\mathcal E}^v_G)$;
we shall denote it by $\overline{\partial}^v_1$.
This Dolbeault operator $\overline{\partial}^v_1$ and the Dolbeault operator for the holomorphic
line bundle $K_X(D)$ together define the Dolbeault operator for the holomorphic vector bundle 
$\text{ad}({\mathcal E}^v_G)\otimes
K_X(D)$. This Dolbeault operator for $\text{ad}({\mathcal E}^v_G)\otimes K_X(D)$ will be denoted by
$\overline{\partial}^v$. Let
\begin{equation}\label{cw0}
{\mathcal W}^0\, \subset\, {\mathcal W}
\end{equation}
be the subset of the direct sum in \eqref{cw} consisting of all $(v,\, w)\, \in\, 
{\mathcal V}^{0,1}\oplus {\mathcal V}^{1,0}$ such that
\begin{equation}\label{abe}
\overline{\partial}^v(w)\,=\, 0\, .
\end{equation}
Therefore,
for any $(v,\, w)\, \in\, {\mathcal W}^0$, the section $w$ is a (holomorphic) Higgs field on the
framed holomorphic principal $G$--bundle $({\mathcal E}^v_G,\, \phi_v)$.

We shall now construct a complex $1$-form on ${\mathcal W}$. For any
$$
(v,\, w)\, \in\, {\mathcal V}^{0,1}\oplus {\mathcal V}^{1,0}\, ,
$$
we have $\widehat{\sigma}_0(v,\, w)\, \in\, C^\infty(X,\, \Omega^{1,1}_X)$, where
$\widehat{\sigma}_0$ is constructed in \eqref{s0}. Note that
while $w$ may have a pole over $D$ as a section of $\text{ad}(E^0_G)$, the
pairing $\widehat{\sigma}_0(v,\, w)$ does not have a pole as a section of $\Omega^{1,1}_X$, because
the image of $\text{ad}_{\phi_0}({\mathcal E}^0_G)_x$ in $\text{ad}({\mathcal E}^0_G)_x$ annihilates
the image of $\text{ad}^n_{\phi_0}({\mathcal E}^0_G)_x$ in $\text{ad}({\mathcal E}^0_G)_x$
for the nondegenerate bilinear form $\widehat{\sigma}_0(x)$ on
$\text{ad}({\mathcal E}^0_G)_x$ for all $x\, \in\, D$. (To see this,
recall from \eqref{e9a} that the image
of $\text{ad}_\phi(E_G)_x$ in $\text{ad}(E_G)_x$ is ${\mathcal H}_x$, while from \eqref{e12} we know that
the image of $\text{ad}^n_\phi(E_G)_x$ in $\text{ad}(E_G)_x$ is ${\mathcal H}^\perp_x$.) Let
\begin{equation}\label{Pp}
\Psi'_0\, \in\, H^0({\mathcal W}, \, \Omega^1_{\mathcal W})
\end{equation}
be the holomorphic $1$-form on $\mathcal W$ defined by
$$
\Psi'_0(v,\, w)(v_1,\, w_1)\, \longmapsto\, \int_X \widehat{\sigma}_0(v_1,\, w)\, \in\, \mathbb C\, ,
$$
for all $(v,\, w)\, \in\, {\mathcal W}$ and $(v_1,\, w_1)\, \in\, T_{(v, w)} {\mathcal W}
\,=\, \mathcal W$; here we are using the fact that
the tangent space $T_{(v, w)}{\mathcal W}$ is canonically identified
with $\mathcal W$ itself as ${\mathcal W}$ is a complex vector space. Note that
using the element of ${\mathcal W}^*$ defined by
$$
(v,\, w)\, \longmapsto \int_X \widehat{\sigma}_0(v,\, w)\, \in\, \mathbb C\, ,
$$
the vector space ${\mathcal V}^{1,0}$ is embedded into the dual vector space
$({\mathcal V}^{0,1})^*$. This embedding produces a holomorphic embedding of
${\mathcal W}$ inside the holomorphic cotangent bundle $(T{\mathcal V}^{0,1})^*$. Using
this embedding, the form $\Psi'_0$ in \eqref{Pp} is the restriction of the Liouville $1$-form on
the holomorphic cotangent bundle $(T{\mathcal V}^{0,1})^*$.

The de Rham differential
\begin{equation}\label{drd}
d\Psi'_0\, =:\, \Phi'_0
\end{equation}
has the following expression: For any $$(v,\, w)\, \in\, {\mathcal W}$$ and any two
tangent vectors $(v_1,\, w_1),\, (v_2,\, w_2)\, \in\, T_{(v, w)} {\mathcal W}$,
$$
\Phi'_0(v,\, w) ((v_1, w_1), (v_2, w_2))\,=\, \int_X
(\widehat{\sigma}_0(v_2, w_1)-\widehat{\sigma}_0(v_1, w_2))\, .
$$
Let $\Psi_0$ and $\Phi_0$ be the restrictions to ${\mathcal W}^0$ (see \eqref{cw0})
of the above defined differential forms $\Psi'_0$ and $\Phi'_0$ respectively.

Let $\mathcal G$ denote the group of all $C^\infty$ automorphisms of the principal 
$G$--bundle $E^0_G$ preserving the framing $\phi_0$. It is straight-forward to check that 
the Lie algebra of $\mathcal G$ is $C^\infty(X,\, \text{ad}_{\phi_0}({\mathcal E}^0_G))$. 
This group $\mathcal G$ has a natural action on ${\mathcal W}$; this action of $\mathcal G$ 
on ${\mathcal W}$ evidently preserves the subset ${\mathcal W}^0$ defined in \eqref{cw0}. 
The $1$-form $\Psi'_0$ on ${\mathcal W}$ is evidently preserved by the action of $\mathcal 
G$ on ${\mathcal W}$, because $\widehat{\sigma}_0$ is preserved under the action of 
$\mathcal G$ on $\text{ad}(E^0_G)$ induced by the action of $\mathcal G$ on the principal 
$G$--bundle $E^0_G$. Consequently, the action of the group $\mathcal G$ on ${\mathcal W}^0$ 
preserves the form $\Psi_0$. The de Rham differential $d\Psi'_0$ is preserved by the action 
of $\mathcal G$ on ${\mathcal W}$, because $\Psi'_0$ is preserved by the action of $\mathcal 
G$ on ${\mathcal W}$. Therefore, the $2$-form $\Phi_0\,=\, (d\Psi'_0)\vert_{{\mathcal W}^0}$ 
is also preserved by the action of $\mathcal G$ on ${\mathcal W}^0$.

Take any element $(v,\, w)\, \in\, {\mathcal W}^0$. As before, $\overline{\partial}^v_1$ and 
$\overline{\partial}^v$ denote the Dolbeault operators for $\text{ad}({\mathcal E}^v_G)$ and 
$\text{ad}({\mathcal E}^v_G)\otimes K_X(D)$ respectively. Take any section
$\beta\,\in\, C^\infty(X,\, \text{ad}_{\phi_0}({\mathcal E}^0_G))$. Now we have
$$
\int_X \widehat{\sigma}_0(\overline{\partial}^v_1(\beta), w)\,=\,
- \int_X \widehat{\sigma}_0(\beta, \overline{\partial}^v(w))\,=\,0\, ,
$$
because $\overline{\partial}^v(w)\,=\, 0$ (see \eqref{abe}). As a consequence of it, the $1$-form $\Psi_0$
on ${\mathcal W}^0$ descends 
under the action of $\mathcal G$ on ${\mathcal W}^0$. Hence $\Phi_0\,=\, d\Psi_0$ also 
descends under the action of $\mathcal G$ on ${\mathcal W}^0$. The descent of $\Psi_0$ 
corresponds to the form $\Psi$ in the proof of Theorem \ref{thm1}, while the descent of 
$\Phi_0$ corresponds to the form $\Phi$ in Theorem \ref{thm1}. From \eqref{drd} if follows that
$\Phi\,=\, d\Psi$.

\section{The Hitchin system: cameral data for framed $G$--Higgs bundles}\label{sec:cameral}

In this section we shall describe the Hitchin integrable system for framed $G$--Higgs bundles.
We will assume that $H_x\,=\, e$ for all $x\, \in\, D$ as it is quite similar to the general case
while being simpler to present; some remarks on the general case are
included for the sake of completeness.

For any holomorphic Poisson manifold $(M,\, \sharp)$, we denote by
$\{\cdot,\,\cdot\}_{\sharp}$ the associated Poisson bracket on $\mathcal{O}_{M}$, i.e., $\{f,\,g\}_{\sharp}
\,=\,\pi^{\sharp}(df,\,dg)$ where $\pi^{\sharp}\in\Gamma(\bigwedge^{2}(TM))$ is the Poisson bi-vector.

A symplectic structure $\omega$ on $M$ also defines a Poisson bracket on $\mathcal{O}_{M}$ by assigning to 
$(f,\,g)\,\in\,\mathcal{O}_{M}\times \mathcal{O}_{M}$ the function $\{f,\,g\}_{\omega}\,=\,
\omega(X_{g},\,X_{f})$, where $X_{f}$ and $X_{g}$ are 
the Hamiltonian vector fields defined by $f$ and $g$ with respect to $\omega$.

Two functions $f,\,g\in \mathcal{O}_{M}$ are said to Poisson commute if
$$
\{f,\, g\}\,=\,0\, .
$$
An \emph{algebraically completely integrable system} on $M$ consists
of functions $f_1,\, \cdots ,\,f_d\,\in\, \mathcal{O}_{M}$ with $d\,=\, \frac{1}{2}\dim M$,
such that
\begin{itemize}
\item $\{f_i,\, f_j\}\,=\,0$ for all $1\, \leq\, i,\, j\, \leq\, d$,

\item the corresponding Hamiltonian vector fields $X_{f_1},\, \cdots, \, X_{f_d}$ are linearly independent
at the general point, and

\item the generic fiber of the map $(f_1,\cdots,\, f_d) \,:\, M\,\longrightarrow\,
\mathbb{C}^d$ is a open set in an abelian variety
such that the vector fields $X_{f_1},\, \cdots, \, X_{f_d}$ are linear on it.
\end{itemize}

\subsection{Recollection: the Hitchin system for Higgs bundles}

Fix a Borel subgroup $B\,\subset\,G$ and a Cartan subgroup $T\,\subset\,B$. Let
$\mathfrak{t}\,\subset\, \mathfrak{b}$ be the Lie algebras of $T$ and $B$. The Weyl group
$N_G(T)/T$, where $N_G(T)$ is the normalizer of $T$ in $G$, will be denoted by $W$.

Consider the Chevalley morphism 
\begin{equation}\label{eq:Chevalley_morph}
\chi\,:\, \mathfrak{g}\,\longrightarrow\,\mathfrak{t}/W
\end{equation}
constructed using the isomorphism $\mathfrak{t}/W\,\cong\, \mathfrak{g}\sslash G\,:=\,\mathrm{Spec}(\mathbb{C}[\mathfrak{g}]^G)$. Since
$\mathbb{C}[\mathfrak{g}]^G$ is generated by homogeneous polynomials of degrees $d_1,\,\cdots,\,
d_r$, where $r\,=\,\mathrm{rank}(G)$, it admits a graded $\mathbb{C}^\times$ action. The
induced action on $\mathfrak{g}\sslash G$ turns
$\chi$ into a $\mathbb{C}^\times$--equivariant morphism. This, using the $G$--invariance
property of the morphism \eqref{eq:Chevalley_morph}, induces a map:
\begin{equation}\label{eq:Hitchin_map_H} 
h\,:\,\mathcal{M}_{H}(G)\,\longrightarrow\, \mathcal{B}\,:=\,H^0(X,\,\mathfrak{t}\otimes K_X(D)/W)
\end{equation}
given by 
$$
h(E,\,\theta)(x)\,=\, \chi(\theta(x))\, .
$$
Alternatively, the choice of $r$ generators $p_1,\,\cdots,\, p_r$ of
$\mathbb{C}[\mathfrak{g}]^G$ of degrees $\deg p_i\,=\,d_i$, $i\,=\,1,\,\cdots, \,r$, induces
an isomorphism
$$
H^0(X,\,\mathfrak{t}\otimes K_X(D)/W)\,\cong\, \bigoplus_{i=1}^{r}H^{0}(X,\,
K^{d_i}_X(d_iD))
$$
under which $h$ can be described as
\begin{equation}\label{eq:alt_hm}
h(E,\theta)(x)\,=\, \left(p_1(\theta(x)),\,\cdots, \,p_r(\theta(x))\right)\, ,\qquad
p_i(\theta)\,\in\, H^0(X,\,K^{d_i}_X(d_i D)).
\end{equation}
The dimension of the vector space $\mathcal{B}$ thus is 
\begin{eqnarray}\label{dimH}\nonumber
N\,:=\,\dim \mathcal{B}\,&=&\sum_{i=1}^r(d_i(2g(X)-2+n)-g(X)+1)\,=\,(g(X)-1)\sum_{i=1}^r(2d_i-1)+n\sum_{i=1}^rd_i\\
&=&(g(X)-1)\dim G+n\cdot\dim B\, ,
\end{eqnarray}
where $g(X)$ is the genus of $X$.

Given any $b\,\in\,\mathcal{B}$, we define the corresponding \emph{cameral cover} as the curve ${X}_b$ given by the commutative diagram:
\begin{equation}\label{eq:cam_cov}
\xymatrix{	{X}_b\ar[r]\ar[d]_{\pi_b}&\mathfrak{t}\otimes K_X(D)\ar[d]\\
	X\ar[r]_{\hspace*{-25pt}b}&\mathfrak{t}\otimes K_X(D)/W}
\end{equation}
Consider the generic locus $\mathcal{B}^{sm}$ corresponding to sections whose associated cameral cover in \eqref{eq:cam_cov} is smooth. Then, by \cite[Proposition 4.7.7]{Ngo},
the inverse image $h^{-1}(b)$ is contained in the locus of $\mathcal{M}_H(G)$ consisting of Higgs bundles
for which the Higgs field $\theta$ is regular at every point, meaning that the orbit of $\theta(x)$ is maximal
dimensional. Moreover, by \cite[Corollary 17.8]{DG}, the choice of a point in the fiber induces an isomorphism
\begin{equation}\label{abel1}
h^{-1}(b)\,\cong\, H^1({X}_b,\, T)^W\, ,
\end{equation}
where the action of $W$ on a principal $T$-bundle $P\,\longrightarrow\, X_b$ is given by
$$
w\cdot P\,=\,(w^*P\times_w T)\otimes \mathcal{R}_w\, .
$$
In the above, $\mathcal{R}_w$ is a principal $T$--bundle naturally associated to the ramification
divisor of $w$ (cf. \cite[\S~5]{DG}). Moreover, there exists a group scheme $J\,\longrightarrow
\,X\times\mathcal{B}$ such that 
$$h^{-1}(b)\,\cong\,H^1(X,\,J_b),$$
where $J_b\,=\,J|_{X\times\{b\}}$. In
other words, the automorphism group of elements of the Hitchin fiber $h^{-1}(b)$ (seen as torsors over $X_b$) descends to $J_b\, \longrightarrow X$. 

In the language of stacks, let 
$\mathbf{M}_H(G)$ be the stack of $G$--Higgs bundles. In a similar way as done in \eqref{eq:Hitchin_map_H} we may define a stacky Hitchin map by:
\begin{equation}\label{eq:stacky_hm}
\begin{array}{ccc}
	\mathbf{h}\,\colon\,\mathbf{M}_H(G)&\longrightarrow& \mathcal{B}\\
	(E,\,\phi)&\longmapsto&\chi(\phi)\, ,
\end{array}
\end{equation}
where $\chi$ is the Chevalley morphism \eqref{eq:Chevalley_morph}. 

Consider the Picard stack $\mathcal{P}\,\longrightarrow 
\,\mathcal{B}$ of principal $J$--bundles. Then, $\mathbf{M}_H(G)|_{\mathcal{B}^{sm}}$ is a torsor over $\mathcal{P}|_{\mathcal{B}^{sm}}$ 
relative to $\mathcal{B}^{sm}$. In particular, if $b\in\mathcal{B}^{sm}$, 
we have an isomorphism
\begin{equation}\label{abel2}
\mathbf{h}^{-1}(b)\cong\mathcal{P}_b,
\end{equation}
determined by a choice of an element of the fiber. 

\begin{lemma}\label{lm:dim_fibers}
\begin{equation}\label{eq:dim_fiber}
\dim h^{-1}(b)\,=\,(g(X)-1)\dim G+n(\dim B-\dim T)+\dim Z(G)\, .
\end{equation}
\end{lemma}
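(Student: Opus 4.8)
\emph{Proof proposal.} The plan is to obtain \eqref{eq:dim_fiber} as a purely numerical consequence of three inputs: the dimension of $\mathcal{M}_{H}(G)$ from Corollary \ref{cord0}, the dimension of the Hitchin base from \eqref{dimH}, and the fact that over $\mathcal{B}^{sm}$ the Hitchin map $h$ is equidimensional of relative dimension $\dim\mathcal{M}_{H}(G)-\dim\mathcal{B}$. First I would record that for $b\in\mathcal{B}^{sm}$ the fiber $h^{-1}(b)$ is smooth: by \eqref{abel1} it is a torsor over (the connected component of) $H^{1}(X_{b},\,T)^{W}$, which is an abelian variety. Its dimension does not vary with $b\in\mathcal{B}^{sm}$, since the branch divisor of the cameral cover $\pi_{b}\colon X_{b}\to X$ is the zero locus of the discriminant section of a line bundle on $X$ independent of $b$, so $g(X_{b})$ is constant on $\mathcal{B}^{sm}$ by Riemann--Hurwitz, and the twisted $W$-action on $H^{1}(X_{b},\,T)$ is of a fixed combinatorial type.

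Next I would argue that this common value equals $\dim\mathcal{M}_{H}(G)-\dim\mathcal{B}$. By \cite[Proposition 4.7.7]{Ngo} all of $h^{-1}(\mathcal{B}^{sm})$ lies in the locus of regular Higgs bundles, along which a direct computation with the deformation complex $\mathcal{C}'_{\bullet}$ of \eqref{e10} shows that the differential of $h$ is surjective; since $\mathcal{B}^{sm}$ is dense open and $h$ is surjective (classical for the Higgs moduli, and in any case implicit in \eqref{abel1}), the restriction of $h$ over $\mathcal{B}^{sm}$ is a smooth surjective morphism, whence $\dim h^{-1}(b)=\dim\mathcal{M}_{H}(G)-\dim\mathcal{B}$ for every $b\in\mathcal{B}^{sm}$. (Equivalently, one may invoke that $\mathcal{M}_{H}(G)$ is irreducible and $h$ is dominant, so the generic fiber has this dimension.)

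It then remains to compute. From Corollary \ref{cord0} we have $\dim\mathcal{M}_{H}(G)=\dim G\cdot(2(g(X)-1)+n)+\dim Z(\mathfrak{g})$, and from \eqref{dimH} we have $\dim\mathcal{B}=(g(X)-1)\dim G+n\dim B$. Subtracting gives
$$
\dim h^{-1}(b)=(g(X)-1)\dim G+n(\dim G-\dim B)+\dim Z(\mathfrak{g}).
$$
Writing $\mathfrak{g}=\mathfrak{n}^{-}\oplus\mathfrak{b}$ and $\mathfrak{b}=\mathfrak{t}\oplus\mathfrak{n}$ with $\dim\mathfrak{n}^{-}=\dim\mathfrak{n}$ yields $\dim G-\dim B=\dim B-\dim T$; and since $\mathrm{Lie}(Z_{0}(G))=Z(\mathfrak{g})$ while $Z(G)/Z_{0}(G)$ is finite, we have $\dim Z(\mathfrak{g})=\dim Z(G)$. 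Substituting these two identities produces exactly \eqref{eq:dim_fiber}.

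The main obstacle is the middle step: pinning down that $h$ is genuinely equidimensional of the expected relative dimension over $\mathcal{B}^{sm}$, rather than merely having fibers of dimension $\geq\dim\mathcal{M}_{H}(G)-\dim\mathcal{B}$. This is precisely where the cameral-cover description \eqref{abel1} (smoothness of the fiber) and Ngô's regularity statement (surjectivity of $dh$ on the relevant locus) are needed, together with the surjectivity of the Hitchin map; once these are in place, the dimension count and the two structural identities above finish the argument.
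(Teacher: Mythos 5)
Your proof is correct, and the closing arithmetic is right ($\dim G-\dim B=\dim B-\dim T$ and $\dim Z(\mathfrak{g})=\dim Z(G)$ for connected reductive $G$), but you take a genuinely different route from the paper. You obtain \eqref{eq:dim_fiber} as the difference $\dim\mathcal{M}_{H}(G)-\dim\mathcal{B}$, using Corollary \ref{cord0} and \eqref{dimH}, justified by equidimensionality of $h$ over $\mathcal{B}^{sm}$. The paper instead computes the dimension of each fiber intrinsically: it uses the abelianization $\mathbf{h}^{-1}(b)\cong\mathcal{P}_b$ of \eqref{abel2} together with the formula $\dim\mathcal{P}_b=(g(X)-1)\dim G+n(\dim B-\dim T)$ from \cite[Corollary 4.13.3]{Ngo}, and then adds $\dim Z(G)$ on account of the rigidification $\mathcal{M}_H(G)=\mathbf{M}_H(G)\fatslash Z(G)$. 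Your approach is more self-contained relative to what has already been proved in the paper, but it carries the burden you yourself flag: one must know that the fiber dimension over $\mathcal{B}^{sm}$ equals the expected relative dimension, which requires surjectivity of $h$, surjectivity of $dh$ along the regular locus (asserted but not carried out), and constancy of $\dim H^1(X_b,\,T)^W$ over all of $\mathcal{B}^{sm}$ rather than just generically --- and that last constancy is essentially the content of the very result the paper cites, so you have not fully escaped it. The paper's route needs no global input about $h$ and is the computation that is reused for the framed subsystem in Corollary \ref{cor:subsystem}.
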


\begin{proof}
By Lemma \ref{lemn1} we have that $\mathcal{M}_H(G)=\mathbf{M}_H(G)\fatslash Z(G)$ (where the symbol $\fatslash$ denotes rigidification 
\cite[Appendix A]{AOV}). So it follows that
$$\dim h^{-1}(b)=\dim\mathcal{P}_b+\dim Z(G)=(g(X)-1)\dim G+n(\dim B-\dim T)+\dim Z(G)\,,$$
where the second equality is \cite[Corollary 4.13.3]{Ngo}.
\end{proof}

The above facts about abelianization of generic fibers (\ref{abel1}) and (\ref{abel2}), the dimensions in Lemma \ref{lm:dim_fibers} and Corollary \ref{cord0}, together with the following proposition prove that the Hitchin map is an algebraically completely integrable system on the Poisson variety $\mathcal{M}_H(G)$.

\begin{proposition}\label{PC-GHiggs}
Let $P$ be the Poisson structure on $\mathcal{M}_{H}(G)$ described in (\ref{P}) and $\{\cdot,\cdot\}_{P}$ its associated Poisson bracket. The $N$ functions on $\mathcal{M}_{H}(G)$ provided by the Hitchin system $h$ in (\ref{eq:Hitchin_map_H}) Poisson-commute with respect to $\{\cdot,\cdot\}_{P}$.
\end{proposition}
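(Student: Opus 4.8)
The plan is to run the classical argument of Hitchin, Bottacin and Markman (\cite{Hi0,Bot,Ma}) in the present $D$--twisted setting; it is the verbatim analogue of the computation carried out for $G\,=\,\text{GL}(r,\mathbb C)$ in \cite{BLP}. Fix homogeneous generators $p_1,\dots,p_r$ of $\mathbb C[\mathfrak g]^G$ with $\deg p_i\,=\,d_i$, so that by \eqref{eq:alt_hm} the components of $h$ are the sections $p_i(\theta)\,\in\, H^0(X,\,K_X^{d_i}(d_iD))$. By Serre duality, $H^0(X,\,K_X^{d_i}(d_iD))^*\,=\, H^1(X,\,K_X^{1-d_i}(-d_iD))$, and the scalar Hitchin functions are $f_{i,\eta}(E_G,\,\theta)\,=\,\langle p_i(\theta),\,\eta\rangle$ with $\eta$ ranging over $H^1(X,\,K_X^{1-d_i}(-d_iD))$. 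As $\eta$ runs over a basis of each such space, these $f_{i,\eta}$ are exactly the components $h_1,\dots,h_N$ of $h$, so by the Leibniz rule it suffices to prove $\{f_{i,\eta},\,f_{j,\zeta}\}_P\,=\,0$ for all such $i,\,j,\,\eta,\,\zeta$.

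The first step is to compute the differential $df_{i,\eta}$ at a point $(E_G,\,\theta)\,\in\,\mathcal M_H(G)$, using Corollary \ref{cord0} and \eqref{sdh}, which give $T_{(E_G,\theta)}\mathcal M_H(G)\,=\,\mathbb H^1(\mathcal C'_\bullet)$ and $T^*_{(E_G,\theta)}\mathcal M_H(G)\,=\,\mathbb H^1(({\mathcal C}')^\vee_\bullet)$. The one piece of genuine input is the $G$--invariance of $p_i$: differentiating $p_i(\text{Ad}_g\xi)\,=\,p_i(\xi)$ and using \eqref{gi} shows that the fibrewise gradient $\nabla p_i(\theta)\,\in\, H^0(X,\,\text{ad}(E_G)\otimes K_X^{d_i-1}((d_i-1)D))$ satisfies the identity $[\theta,\,\nabla p_i(\theta)]\,=\,0$. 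Representing a tangent vector by a \v{C}ech cocycle $(\{u_{ab}\},\,\{\varphi_a\})$ for $\mathcal C'_\bullet$ (so $\varphi_a-\varphi_b\,=\,[\theta,\,u_{ab}]$), one checks that the induced variation of $p_i(\theta)$ is the global section $\{\widehat\sigma(\nabla p_i(\theta),\,\varphi_a)\}$ of $K_X^{d_i}(d_iD)$ --- well-definedness across overlaps and independence of the representative both being immediate consequences of $[\theta,\,\nabla p_i(\theta)]\,=\,0$. Pairing with $\eta$ then identifies $df_{i,\eta}\,\in\,\mathbb H^1(({\mathcal C}')^\vee_\bullet)$ with the class of the cocycle $(\{\nabla p_i(\theta)\cdot\eta_{ab}\},\,0)$, whose degree--$1$ component vanishes; here $\nabla p_i(\theta)\cdot\eta\,\in\, H^1(X,\,\text{ad}(E_G)(-D))\,=\, H^1(X,\,({\mathcal C}')^\vee_0)$, and it lies in $\ker f_\theta$, again because $[\theta,\,\nabla p_i(\theta)]\,=\,0$.

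The second step is to apply $P\,=\,\mathcal R_*$ from Proposition \ref{prop2}. Since $\mathcal R\colon({\mathcal C}')^\vee_\bullet\,\to\,\mathcal C'_\bullet$ is induced by the natural inclusions of the $(-D)$--twisted sheaves, the tangent vector $P(df_{j,\zeta})\,\in\,\mathbb H^1(\mathcal C'_\bullet)$ is represented by the very same cocycle $(\{\nabla p_j(\theta)\cdot\zeta_{ab}\},\,0)$, now read inside $\mathcal C'_\bullet$; this is a legitimate \v{C}ech representative precisely because $[\theta,\,\nabla p_j(\theta)]\,=\,0$, and its Higgs-field (degree--$1$) component is zero. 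But the formula of the first step shows that $df_{i,\eta}$, as a linear functional on $\mathbb H^1(\mathcal C'_\bullet)$, depends on a tangent vector only through the degree--$1$ component $\{\varphi_a\}$ of a representing cocycle, hence it annihilates $P(df_{j,\zeta})$. Therefore $\{f_{i,\eta},\,f_{j,\zeta}\}_P\,=\,\langle df_{i,\eta},\,P(df_{j,\zeta})\rangle\,=\,0$. Equivalently: the Hamiltonian vector field $P(df_{i,\eta})$ is an infinitesimal deformation of the underlying principal bundle $E_G$ alone, so $dh$ kills it and the Hitchin flows preserve $h$.

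The only laborious part is the \v{C}ech/hypercohomology bookkeeping needed to pin down the cocycle representatives of $df_{i,\eta}$ and $P(df_{j,\zeta})$ and to verify that the Serre-duality pairings match up as claimed; the entire conceptual content is carried by the single identity $[\theta,\,\nabla p_i(\theta)]\,=\,0$. None of this uses anything special about $\text{GL}(r,\mathbb C)$, so it is a direct transcription of the corresponding argument in \cite{BLP}, and it goes through with the obvious substitutions ($\mathcal D_\bullet$, $\text{ad}_\phi(E_G)$, $\text{ad}^n_\phi(E_G)$ replacing $\mathcal C'_\bullet$, $\text{ad}(E_G)$, $\text{ad}(E_G)$) when the $H_x$ are nontrivial, which together with Theorem \ref{thm2} yields Corollary \ref{cor:HF-Poissonc}.
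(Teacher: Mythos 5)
Your argument is correct, but it is not the route the paper takes: the paper's entire proof of Proposition \ref{PC-GHiggs} is a citation to \cite[Theorem 8.5, Remark 8.6]{Ma} and \cite[Section 5]{DMa}, whereas you supply the underlying direct computation (which is essentially the classical Hitchin--Bottacin--Markman argument that those references formalize, and the analogue of \cite[Proposition 5.12]{BLP}). The comparison is this: the citation buys brevity and offloads the \v{C}ech bookkeeping to the literature, while your self-contained proof makes visible exactly which structural facts are used --- namely the single identity $[\theta,\,\nabla p_i(\theta)]\,=\,0$ coming from $G$--invariance of $p_i$ together with the invariance \eqref{gi} of $\sigma$, and the fact that $P\,=\,\mathcal R_*$ sends the cotangent class $(\{\nabla p_j(\theta)\cdot\zeta_{ab}\},\,0)$ to a tangent class with vanishing degree--$1$ (Higgs-field) component, so that the Serre-duality cup product with $df_{i,\eta}$ (whose degree--$1$ component also vanishes) has no nonzero cross terms. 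One phrasing deserves care: the statement that $df_{i,\eta}$ ``depends only on $\{\varphi_a\}$'' is not literally a statement about arbitrary representatives (different representatives of the same class have different $\{\varphi_a\}$); what you actually use, and what is true, is that the duality pairing of the cocycle $(\{\nabla p_i(\theta)\cdot\eta_{ab}\},\,0)$ with a cocycle of the form $(\{v_{ab}\},\,0)$ is computed by a cochain each of whose two cross terms contains a vanishing factor. With that reading, the degree and twist counts all check out ($\nabla p_i(\theta)\cdot\eta_{ab}$ lands in $\mathrm{ad}(E_G)(-D)\,=\,(\mathcal C')^\vee_0$ and lies in $\ker f_\theta$), and your proof is a valid, more informative substitute for the paper's citation. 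Your closing remark also correctly anticipates that the paper deduces Corollary \ref{cor:HF-Poissonc} from this proposition via Theorem \ref{thm2} rather than by redoing the computation on $\mathcal M_{FH}(G)$.
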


\begin{proof}
This follows from the results in \cite[Theorem 8.5, Remark 8.6]{Ma} and \cite[Section 5]{DMa}.
\end{proof}

\subsection{The Hitchin morphism for framed $G$-- Higgs bundles}

Now consider the morphism
\begin{equation}\label{eq:Hitchin_map_FH} 
h_{FH}\,:\,\mathcal{M}_{FH}(G)\,\longrightarrow\, \mathcal{B}
\end{equation}
defined by the commutative diagram
\begin{equation}\label{eq:commuting_hms}
\xymatrix{
	\mathcal{M}_{FH}(G)\ar[d]_\varphi\ar[dr]^{h_{FH}}&\\
	\mathcal{M}_H(G)\ar[r]^{\,\,\,h}&	\mathcal{B}
	}
\end{equation}
where $\varphi$ is defined in \eqref{vp} and $h$ is in \eqref{eq:Hitchin_map_H}. 

\begin{remark}\label{rk:alt_hmf}
By commutativity of \eqref{eq:commuting_hms}, it turns out that $h_{FM}$ can also be expressed in terms of invariant
polynomials as in \eqref{eq:alt_hm}.
\end{remark}

Note that Proposition \ref{PC-GHiggs} and Theorem \ref{thm2} together give the following.

\begin{corollary}\label{cor:HF-Poissonc}
Let $\Phi$ be the holomorphic symplectic form on $\mathcal{M}_{FH}(G)$ and $\{\cdot,\cdot\}_{\Phi}$ it associated Poisson
bracket. The $N$ functions in $h_{FH}$ Poisson commute with respect to $\{\cdot,\cdot\}_{\Phi}$
\end{corollary}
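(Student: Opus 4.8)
The plan is to derive Corollary \ref{cor:HF-Poissonc} as a formal consequence of the two ingredients explicitly flagged before its statement: Proposition \ref{PC-GHiggs} (the $N$ functions $h_1,\dots,h_N$ comprising the Hitchin map $h$ on $\mathcal{M}_H(G)$ Poisson-commute for the Poisson bracket $\{\cdot,\cdot\}_P$) and Theorem \ref{thm2} (the forgetful morphism $\varphi\,:\,\mathcal{M}_{FH}(G)\,\longrightarrow\,\mathcal{M}_H(G)$ is Poisson, where $\mathcal{M}_{FH}(G)$ carries the Poisson structure induced by the symplectic form $\Phi$ of Theorem \ref{thm1} and $\mathcal{M}_H(G)$ carries $P$ from \eqref{P}). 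The key observation is that, by the commutative triangle \eqref{eq:commuting_hms}, the components of $h_{FH}$ are exactly the pullbacks $\varphi^* h_i$ of the components of $h$.

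First I would record the elementary fact about Poisson maps: if $\psi\,:\,(M_1,\sharp_1)\,\longrightarrow\,(M_2,\sharp_2)$ is a morphism of holomorphic Poisson manifolds, then for all $f,g\,\in\,\mathcal{O}_{M_2}$ one has $\{\psi^*f,\,\psi^*g\}_{\sharp_1}\,=\,\psi^*\{f,\,g\}_{\sharp_2}$. This is immediate from the definition $\{f,g\}_\sharp\,=\,\pi^\sharp(df,dg)$ together with the chain rule $d(\psi^*f)\,=\,\psi^*(df)$ and the pushforward compatibility $(d\psi)_*\pi^{\sharp_1}\,=\,\pi^{\sharp_2}$ which is precisely the statement that $\psi$ is Poisson. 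Then I would apply this with $\psi\,=\,\varphi$, $f\,=\,h_i$ and $g\,=\,h_j$, obtaining
$$
\{(h_{FH})_i,\,(h_{FH})_j\}_{\Phi}\,=\,\{\varphi^* h_i,\,\varphi^* h_j\}_{\Phi}\,=\,\varphi^*\{h_i,\,h_j\}_{P}\,=\,\varphi^*(0)\,=\,0
$$
for all $i,j\,\in\,\{1,\dots,N\}$, where the second equality uses Theorem \ref{thm2} and the third uses Proposition \ref{PC-GHiggs}. That the $(h_{FH})_i$ really are $\varphi^*h_i$ follows from Remark \ref{rk:alt_hmf}, or directly: $h_{FH}\,=\,h\circ\varphi$ componentwise.

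There is essentially no obstacle here — the content has all been done in Theorem \ref{thm2} and Proposition \ref{PC-GHiggs}; the corollary is a one-line formal deduction. The only point requiring minor care is to confirm that the Poisson bracket $\{\cdot,\cdot\}_\Phi$ associated to the symplectic form $\Phi$ in the sense recalled at the start of Section \ref{sec:cameral} agrees with the Poisson bracket coming from the Poisson bivector $(\Phi_{(E_G,\phi,\theta)})^{-1}$ that appears in the proof of Theorem \ref{thm2}; this is the standard correspondence between a symplectic form and its inverse bivector, with sign conventions as fixed in the body of the paper, so I would simply invoke it. Hence the proof is complete.
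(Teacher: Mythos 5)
Your proposal is correct and is essentially the paper's own argument: the paper derives the corollary in one line from Proposition \ref{PC-GHiggs} and Theorem \ref{thm2}, exactly the two ingredients you combine via the standard pullback identity for Poisson maps and the factorization $h_{FH}\,=\,h\circ\varphi$ from \eqref{eq:commuting_hms}. Your write-up merely makes explicit the routine verification that the paper leaves implicit.
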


Let $Z(G)$ denote the center of $G$.

\begin{proposition}\label{prop phi is torsor}
The forgetful map $\varphi$ in \eqref{vp} makes ${\mathcal M}_{FH}(G)$ a torsor over the orbifold
${\mathcal M}_{H}(G)$ for the group $(\prod_{x\in D} G)/Z(G)\,=\, G^n/Z(G)$, where $n\,=\, \# D$ and $Z(G)$
is embedded diagonally in $G^n$.
\end{proposition}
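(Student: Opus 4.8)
The plan is to show that the fibres of $\varphi$ are precisely the $G^n/Z(G)$--orbits of a natural action on $\mathcal{M}_{FH}(G)$, and that this action is free, which together give the torsor structure. We work with $H_x\,=\,e$ for all $x\,\in\, D$, so that by Remark \ref{rem0} the space of framings ${\mathcal F}(E_G)\,=\,\prod_{x\in D}(E_G)_x$ is a torsor for $G^n$: given a framing $\phi$ and $(g_1,\,\cdots,\,g_n)\,\in\, G^n$, there is a unique framing $\phi\cdot(g_1,\,\cdots,\,g_n)$ with $(\phi\cdot(g_1,\,\cdots,\,g_n))(x_i)\,=\,\phi(x_i)g_i$. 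Since $H_x\,=\,e$ forces ${\mathcal H}_x\,=\,0$ and hence ${\rm ad}^n_\phi(E_G)\,=\,{\rm ad}(E_G)$ (as already noted in the excerpt before Lemma \ref{lemid}), the space of admissible Higgs fields $H^0(X,\,{\rm ad}(E_G)\otimes K_X(D))$ does not depend on the framing. Thus $G^n$ acts on $\mathcal{M}_{FH}(G)$ by $(E_G,\,\phi,\,\theta)\cdot(g_1,\,\cdots,\,g_n)\,=\,(E_G,\,\phi\cdot(g_1,\,\cdots,\,g_n),\,\theta)$, fixing $E_G$ and $\theta$, and the orbits of this action are exactly the fibres of $\varphi$: two stable framed $G$--Higgs bundles $(E_G,\,\phi,\,\theta)$ and $(E'_G,\,\phi',\,\theta')$ have the same image in $\mathcal{M}_{H}(G)$ iff $(E_G,\,\theta)\,\cong\,(E'_G,\,\theta')$, and any isomorphism carrying $(E_G,\,\theta)$ to $(E'_G,\,\theta')$ then carries $\phi$ to some framing of $E'_G$, which differs from $\phi'$ by a unique element of $G^n$.

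The next step is to identify the kernel of this action on isomorphism classes. An element $(g_1,\,\cdots,\,g_n)$ fixes the isomorphism class of $(E_G,\,\phi,\,\theta)$ iff there is an automorphism $\psi$ of the $D$--twisted $G$--Higgs bundle $(E_G,\,\theta)$ with $\psi(\phi(x_i))\,=\,\phi(x_i)g_i$ for all $i$. By Lemma \ref{lemn1}, the automorphism group of a stable $(E_G,\,\theta)$ is exactly $Z(G)$ (acting through the center of each fibre), and for $c\,\in\, Z(G)$ the induced map on $(E_G)_{x_i}$ is right translation by $c$; hence such a $(g_1,\,\cdots,\,g_n)$ exists precisely when $g_1\,=\,\cdots\,=\,g_n\,=\,c$ for some $c\,\in\, Z(G)$, i.e.\ the kernel is $Z(G)$ embedded diagonally. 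Therefore the induced action of $G^n/Z(G)$ on $\mathcal{M}_{FH}(G)$ is free and transitive on each fibre of $\varphi$, which is the assertion. One should also record that the action is algebraic and that local triviality follows from the local triviality of the universal family / the existence of local sections of $\varphi$ — this is where one invokes the orbifold/stack structure already used in Corollary \ref{cord0} and Corollary \ref{cord}, or simply notes that $\varphi$ is, fibrewise over $\mathcal{M}_H(G)$, the $G^n/Z(G)$--torsor $\bigl(\prod_{x\in D}E_G|_{x}\bigr)/Z(G)$ associated to the universal bundle.

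The main obstacle is the bookkeeping at the level of moduli orbifolds rather than the set-theoretic statement: one must be careful that stability is unaffected by the framing (which is immediate from Definition \ref{def2}, since stability of $(E_G,\,\phi,\,\theta)$ is defined to be stability of $(E_G,\,\theta)$), and that the $G^n/Z(G)$--action descends to a genuinely free action on the moduli \emph{orbifold} — equivalently, that the automorphisms we quotiented by in forming $\mathcal{M}_{FH}(G)$ are compatible with those used in forming $\mathcal{M}_{H}(G)$. Concretely this reduces to the fibrewise computation above together with the observation that an automorphism of $(E_G,\,\phi,\,\theta)$ as a framed object is an automorphism of $(E_G,\,\theta)$ fixing $\phi$, hence (by Lemma \ref{lemn1} again) trivial; so $\mathcal{M}_{FH}(G)$ carries no extra orbifold structure beyond that pulled back from $\mathcal{M}_{H}(G)$, and $\varphi$ is a torsor in the orbifold sense as well. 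The diagonal embedding $Z(G)\,\hookrightarrow\, G^n$ being central makes the quotient group $G^n/Z(G)$ well defined, completing the description.
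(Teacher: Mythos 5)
Your argument is essentially the paper's own proof: identify the fibre $\varphi^{-1}(E_G,\,\theta)$ with ${\mathcal F}(E_G)/\mathrm{Aut}(E_G,\theta)$, observe that ${\mathcal F}(E_G)\,=\,\prod_{x\in D}(E_G)_x$ is a $G^n$--torsor, and check that the automorphisms act through the diagonal copy of $Z(G)$. The one imprecision is your claim that Lemma \ref{lemn1} gives $\mathrm{Aut}(E_G,\theta)\,=\,Z(G)$ exactly --- that lemma only computes the Lie algebra ${\mathbb H}^0({\mathcal C}'_{\bullet})\,=\,Z({\mathfrak g})$, and for a stable pair $\mathrm{Aut}(E_G,\theta)$ may be a finite extension of $Z(G)$; the paper absorbs this by noting that $\mathrm{Aut}(E_G,\theta)/Z(G)$ is precisely the inertia group of the orbifold point, so the torsor statement holds in the orbifold sense, which is the sense in which you (correctly) conclude.
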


\begin{proof}
take any $(E_G,\, \theta)\, \in\, {\mathcal M}_{H}(G)$.
The group $Z(G)$ is a subgroup of the
group $\text{Aut}(E_G, \theta)$ parametrizing all holomorphic automorphisms of the $D$-twisted
$G$--Higgs bundle $(E_G,\, \theta)$. In fact $Z(G)$ is a normal
subgroup of $\text{Aut}(E_G, \theta)$ such that quotient $\text{Aut}(E_G, \theta)/Z(G)$ coincides with the inertia group
of the orbifold point $(E_G,\, \theta)\, \in\, {\mathcal M}_{H}(G)$.

Now consider
$$
{\mathcal F}(E_G)\,=\, \prod_{x\in D} (E_G)_x/H_x\, =\, \prod_{x\in D} (E_G)_x
$$
constructed in \eqref{e3}. From the action of $G$ on $(E_G)_x$, $x\, \in\, D$, we get an action of
$G^n\,=\, \prod_{x\in D}G$ on $\prod_{x\in D} (E_G)_x$.
Consider $Z(G)$ embedded diagonally in $\prod_{x\in D}G$.
The action of this subgroup $Z(G)\, \subset\, \prod_{x\in D}G$ on $\prod_{x\in D} (E_G)_x$ factors through the
tautological action of $\text{Aut}(E_G,\theta)$ on $\prod_{x\in D} (E_G)_x$.

On the other hand, the inverse image $\varphi^{-1}(E_G, \theta)\, \subset\,
{\mathcal M}_{FH}(G)$ is evidently identified with ${\mathcal F}(E_G)/\text{Aut}(E_G,\theta)$. This proves that
the orbifold ${\mathcal M}_{FH}(G)$ is a torsor over
${\mathcal M}_{H}(G)$ for the group $(\prod_{x\in D} G)/Z(G)\,=\, G^n/Z(G)$.
\end{proof}

From Proposition \ref{prop phi is torsor} a description of the Hitchin fibers is obtained.

\begin{corollary}\label{cor:torsor-fibres}
The forgetful morphism induces a $G^n/Z(G)$-torsor structure 
$$
{h}_{FH}^{-1}(b)\longrightarrow{h}^{-1}(b).
$$
\end{corollary}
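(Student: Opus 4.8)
The plan is to transport the $G^n/Z(G)$-torsor structure on $\mathcal{M}_{FH}(G)$ over $\mathcal{M}_H(G)$, established in Proposition \ref{prop phi is torsor}, along the Hitchin map. The key point is simply that the forgetful morphism $\varphi$ in \eqref{vp} is compatible with the two Hitchin maps, i.e. $h_{FH}\,=\,h\circ\varphi$ by the commutative diagram \eqref{eq:commuting_hms}. Fixing $b\,\in\,\mathcal{B}$, I would restrict the torsor structure map to the preimages: since $\varphi$ maps $\varphi^{-1}(h^{-1}(b))\,=\,h_{FH}^{-1}(b)$ onto $h^{-1}(b)$, and the $G^n/Z(G)$-action on $\mathcal{M}_{FH}(G)$ preserves the underlying $D$-twisted $G$--Higgs bundle (it only moves the framing $\phi$, leaving $(E_G,\,\theta)$ unchanged), this action preserves every fiber of $\varphi$, hence in particular preserves $h_{FH}^{-1}(b)$.

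First I would observe that the $G^n/Z(G)$-action on $\mathcal{M}_{FH}(G)$ described in the proof of Proposition \ref{prop phi is torsor} acts fiberwise over $\mathcal{M}_H(G)$; concretely, $(g_x)_{x\in D}$ sends $(E_G,\,\phi,\,\theta)$ to $(E_G,\,(g_x\cdot\phi(x))_{x\in D},\,\theta)$, which has the same image $(E_G,\,\theta)$ under $\varphi$. Consequently the restriction of this action to $h_{FH}^{-1}(b)\,=\,\varphi^{-1}(h^{-1}(b))$ is well-defined. Next, for each point $(E_G,\,\theta)\,\in\,h^{-1}(b)$ the fiber $\varphi^{-1}(E_G,\,\theta)\,=\,\mathcal{F}(E_G)/\mathrm{Aut}(E_G,\theta)$ is, by Proposition \ref{prop phi is torsor}, a single orbit of $G^n/Z(G)$ which is moreover a torsor (free and transitive). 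Thus $h_{FH}^{-1}(b)\,\longrightarrow\,h^{-1}(b)$ has each fiber a $G^n/Z(G)$-torsor, and the action is globally free and transitive on fibers, which is exactly the assertion that $h_{FH}^{-1}(b)\,\longrightarrow\,h^{-1}(b)$ is a $G^n/Z(G)$-torsor.

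The only genuinely non-formal point to check is that this fiberwise torsor structure is suitably algebraic/holomorphic in families over $h^{-1}(b)$ — i.e. that the relevant local triviality holds in the orbifold/stacky sense — but this is already part of the content of Proposition \ref{prop phi is torsor}, whose torsor statement over all of $\mathcal{M}_H(G)$ restricts to any closed subvariety, in particular to $h^{-1}(b)$. So I expect the proof to be short: invoke Proposition \ref{prop phi is torsor}, note that $h_{FH}\,=\,h\circ\varphi$ so that $h_{FH}^{-1}(b)\,=\,\varphi^{-1}(h^{-1}(b))$, and conclude that restricting the torsor to this subvariety gives the claimed $G^n/Z(G)$-torsor structure $h_{FH}^{-1}(b)\,\longrightarrow\,h^{-1}(b)$. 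The main (mild) obstacle is purely bookkeeping: making sure the center $Z(G)$ is quotiented out consistently on both sides, which is handled by the identification $\mathcal{F}(E_G)/\mathrm{Aut}(E_G,\theta)$ from Proposition \ref{prop phi is torsor}.
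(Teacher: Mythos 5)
Your argument is exactly the paper's: the corollary is stated as an immediate consequence of Proposition \ref{prop phi is torsor} together with the commutativity $h_{FH}=h\circ\varphi$ from \eqref{eq:commuting_hms}, so that $h_{FH}^{-1}(b)=\varphi^{-1}(h^{-1}(b))$ and the fiberwise $G^n/Z(G)$-torsor structure restricts. Your write-up just makes explicit the bookkeeping the paper leaves implicit; it is correct and follows the same route.
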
 

In particular, the Hitchin system is not abelianizable, thus neither is it algebraically 
completely integrable. Note also that the number of Poisson commuting functions provided by 
$h_{FH}$ is less than half of the dimension of $\mathcal{M}_{FH}(G)$. We next define a 
maximally abelianizable subsystem such that its dimension doubles the number of Poisson 
commuting functions. In order to do that, we need to introduce some more notation.

Consider the stack of stable framed Higgs bundles $\mathbf{M}_{FH}(G)$. Forgetting the frame induces a $G^n$-torsor
$\Phi:\mathbf{M}_{FH}(G)\longrightarrow \mathbf{M}_H(G)$ by Proposition \ref{prop phi is torsor}. Now, the Hitchin map
in \eqref{eq:Hitchin_map_FH} also admits a stacky version $\mathbf{h}_{FH}$ defined by the commutative diagram:
\begin{equation}\label{eq:stacky_framed_hm}
\xymatrix{
\mathbf{M}_{FH}(G)\ar[d]_\Phi\ar[dr]^{\mathbf{h}_{FH}}&\\
\mathbf{M}_H(G)\ar[r]^{\,\,\,\mathbf{h}}& \mathcal{B},\,
}
\end{equation}
where $\Phi$ is the forgetful morphism and $\mathbf{h}$ is defined in \eqref{eq:stacky_hm}. 
Note that by Proposition \ref{propdim} we have $\mathcal{M}_{FH}(G)\,=\,\mathbf{M}_{FH}(G)\fatslash Z(G)$, so the
following commutative diagram is obtained
\begin{equation}\label{eq:commuting_rigidifications}
\xymatrix{
\mathbf{M}_{FH}(G)\ar[rr]\ar[dd]_\Phi\ar[dr]^{\mathbf{h}_{FH}}&&\mathcal{M}_{FH}(G)\ar[dd]^\varphi\ar[dl]_{h_{FH}}\\
& \mathcal{B}&\\
\mathbf{M}_H(G)\ar[rr]\ar[ur]^{\mathbf{h}}&&\mathcal{M}_H(G)\ar[ul]_h,\,
}
\end{equation}
where the horizontal arrows are $Z(G)$--torsors defined via rigidification.

\begin{lemma} 
The forgetful morphism	$\mathbf{h}_{FH}^{-1}(b)\,\longrightarrow\, \mathbf{h}^{-1}(b) $ induces a $G^n$ torsor structure.
\end{lemma}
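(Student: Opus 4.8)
The plan is to deduce the statement from the torsor property of the full forgetful morphism $\Phi$ together with the stability of torsors under base change. Recall from the paragraph introducing \eqref{eq:stacky_framed_hm} (the stacky enhancement of Proposition \ref{prop phi is torsor}) that $\Phi\colon\mathbf{M}_{FH}(G)\longrightarrow\mathbf{M}_H(G)$ is a $G^n$--torsor: over a point $(E_G,\,\theta)$ its fibre is the space of framings ${\mathcal F}(E_G)\,=\,\prod_{x\in D}(E_G)_x$, which is a torsor for $G^n\,=\,\prod_{x\in D}G$ under the action induced by the right $G$--action on each $(E_G)_x$, and this family of torsors is (étale, hence fppf) locally trivial over $\mathbf{M}_H(G)$, so $\Phi$ is representable and a $G^n$--torsor as a morphism of stacks.

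First I would use the commutativity of \eqref{eq:stacky_framed_hm}, i.e. $\mathbf{h}_{FH}\,=\,\mathbf{h}\circ\Phi$, to identify $\mathbf{h}_{FH}^{-1}(b)$ with $\Phi^{-1}(\mathbf{h}^{-1}(b))$, that is, with the $2$--fibre product $\mathbf{M}_{FH}(G)\times_{\mathbf{M}_H(G)}\mathbf{h}^{-1}(b)$. Under this identification the forgetful morphism restricts to $\mathbf{h}_{FH}^{-1}(b)\longrightarrow\mathbf{h}^{-1}(b)$, which is precisely the pullback of $\Phi$ along the inclusion $\mathbf{h}^{-1}(b)\hookrightarrow\mathbf{M}_H(G)$. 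Since being a $G^n$--torsor is preserved under arbitrary base change, this pullback is again a $G^n$--torsor. Equivalently, the $G^n$--action on $\mathbf{M}_{FH}(G)$ preserves the fibres of $\mathbf{h}_{FH}$ because $\mathbf{h}_{FH}$ factors through the $G^n$--invariant morphism $\Phi$; restricted to $\mathbf{h}_{FH}^{-1}(b)$ this action is free and acts simply transitively on the fibres of $\mathbf{h}_{FH}^{-1}(b)\to\mathbf{h}^{-1}(b)$ since it already did on those of $\Phi$, and local triviality is inherited from $\Phi$. This yields the asserted $G^n$--torsor structure.

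There is no genuinely hard step once the stacky form of Proposition \ref{prop phi is torsor} is in place; the only point needing a line of care is to confirm that $\Phi$ is a $G^n$--torsor as a morphism of stacks, not merely a morphism with $G^n$--homogeneous fibres. This is checked after pulling back along a smooth atlas $U\to\mathbf{M}_H(G)$, over which a choice of framing trivialises the family ${\mathcal F}(E_G)$ and exhibits $\Phi$ as $U\times G^n\to U$ locally; everything else is the formal behaviour of torsors under base change.
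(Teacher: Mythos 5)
Your proposal is correct and follows essentially the same route as the paper: both arguments use the commutativity of \eqref{eq:stacky_framed_hm} to see that $\Phi$ carries fibers of $\mathbf{h}_{FH}$ to fibers of $\mathbf{h}$, and both reduce to the fact that at the stacky level the fiber of $\Phi$ over $(E_G,\theta)$ is the full $G^n$--torsor $\prod_{x\in D}(E_G)_x$ with no quotient by $\mathrm{Aut}(E_G,\theta)$. Your phrasing via base change of torsors is just a more formal packaging of the paper's remark that the argument of Proposition \ref{prop phi is torsor} applies verbatim once the automorphism quotient is dropped.
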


\begin{proof}
Commutativity of \eqref{eq:stacky_framed_hm} implies that $\Phi$ takes fibers of $\mathbf{h}_{FH}$ to fibers
of $\mathbf{h}$. 
The rest follows as in the proof of Proposition \ref{prop phi is torsor}, after incorporating the observation that 
quotienting by automorphisms of the base is not necessary when working with stacks.
\end{proof}

\subsection{Relatively framed Higgs bundles}

In this section we produce a subsystem of the Hitchin system \eqref{eq:Hitchin_map_FH} which is an algebraically completely integrable system. 

Consider $\mathcal{B}_{nr}^{sm}\subset \mathcal{B}^{sm}$, the subset of smooth cameral covers
unramified over $D$. Over this we consider the stack $\mathcal{P}_{FH}$ of $J$ principal bundles
with a $W$ and $T$--equivariant framing over ${D}_{\mathcal{B}}\,=\,
(D\times\mathcal{B})\times_{\mathfrak{t}\otimes K_X(D)/W} \mathfrak{t}\otimes K_X(D)$. If
$D_b\,=\,D_{\mathcal{B}}|_{b}$, then
equivariance of $\delta\,:\,P|_{{D}_b}\,\cong\,{{D}_b}\times T$ is given by
\begin{equation}\label{eq:equivar}\delta_{w^{-1}x}=w^{-1}\circ\delta_x
\end{equation}
where
$$\delta_x\,:\,P_{x}\,\stackrel{\sim}{\longrightarrow}\, T
$$
is the frame at a point $x\,\in \,D_b$ and $w^{-1}\,:\,T\,\longrightarrow\, T$ is the usual action.

By the following proposition, $\mathcal{P}_{FH}$ is an abelian group stack relative to
$\mathcal{B}_{nr}^{sm}$.

\begin{proposition}\label{prop_relative Picard}
The forgetful morphism 
\begin{equation}\label{eq:morphism_Picard_stacks}
\mathcal{P}_{FH}\,\longrightarrow\, \mathcal{P}
\end{equation}
induces a $T^n$ torsor structure.
\end{proposition}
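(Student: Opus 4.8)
The plan is to compute the fibres of the forgetful morphism \eqref{eq:morphism_Picard_stacks}, relative to $\mathcal{B}^{sm}_{nr}$, and to identify them — together with a local triviality statement — as torsors over the constant group $T^n$. First I would fix a point of $\mathcal{P}$, i.e.\ a pair $(b,\,P)$ with $b\,\in\,\mathcal{B}^{sm}_{nr}$ and $P$ a principal $J_b$--bundle (equivalently, in the Donagi--Gaitsgory description underlying \eqref{abel1}, a suitably twisted $W$--equivariant principal $T$--bundle on the cameral cover $X_b$), and describe the fibre of \eqref{eq:morphism_Picard_stacks} over it as the set of $W$-- and $T$--equivariant framings $\delta\,:\,P|_{D_b}\,\stackrel{\sim}{\longrightarrow}\, D_b\times T$ subject to \eqref{eq:equivar}. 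The key elementary remark is that, since $b\,\in\,\mathcal{B}^{sm}_{nr}$, the cameral cover $X_b$ is unramified over $D$, so $D_b\,=\,\pi_b^{-1}(D)$ is finite \'etale over $D$ and $D_b\,\longrightarrow\, D$ is a $W$--torsor; in particular $W$ acts freely on $D_b$, with exactly $n\,=\,\#D$ orbits, and over $D_b$ the twisting bundles $\mathcal{R}_w$ entering the $W$--equivariant structure are trivial, so the equivariance condition on $\delta$ is precisely \eqref{eq:equivar}.

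Next I would pin down the torsor structure. Forgetting $W$--equivariance, the set of $T$--equivariant trivialisations of the $T$--bundle $P|_{D_b}$ over the finite scheme $D_b$ is a torsor over $\mathrm{Aut}(P|_{D_b})\,=\,\mathrm{Map}(D_b,\,T)$. A direct computation with \eqref{eq:equivar} — replacing $\delta$ by $t\cdot\delta$ for $t\,=\,(t_x)_{x\in D_b}$ and requiring the result to remain equivariant — shows that the automorphisms preserving the equivariance condition are exactly the maps $t$ with $t_{w^{-1}x}\,=\,w^{-1}(t_x)$ for all $w\,\in\, W$, $x\,\in\, D_b$, that is, the $W$--equivariant maps $D_b\,\longrightarrow\, T$. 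Because the $W$--action on $D_b$ is free with $n$ orbits, such a map is uniquely and freely determined by its value at one chosen point in each orbit, so $\mathrm{Map}_W(D_b,\,T)\,\cong\, T^n$ (untwistedly). Hence the fibre of \eqref{eq:morphism_Picard_stacks} over $(b,\,P)$, once nonempty, is a torsor over $T^n$.

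Finally I would establish nonemptiness and local triviality over $\mathcal{B}^{sm}_{nr}$. \'Etale--locally on $\mathcal{B}^{sm}_{nr}$ the morphism $D_{\mathcal{B}}\,\longrightarrow\, D\times\mathcal{B}^{sm}_{nr}$ admits a section (being a $W$--torsor), and over such a neighbourhood the restriction of the universal $J$--bundle to $D_{\mathcal{B}}$ is a trivial $T$--bundle; trivialising it along the chosen section and propagating the trivialisation by \eqref{eq:equivar} yields a $W$-- and $T$--equivariant framing. Together with the previous paragraph this exhibits \eqref{eq:morphism_Picard_stacks} as a torsor under the constant group scheme $T^n$ over $\mathcal{B}^{sm}_{nr}$, which is the claim. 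The argument runs parallel to that of Proposition \ref{prop phi is torsor}, but now on the cameral/abelianised side, with $T^n$ in place of $G^n$ and with the extra equivariance constraint accounting for the passage from $G^n$ to $T^n$.

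The step I expect to be the main obstacle is the equivariance bookkeeping in the middle paragraph: one must verify that the group of framing automorphisms compatible with \eqref{eq:equivar} is \emph{exactly} $\mathrm{Map}_W(D_b,\,T)$ and that this group is (canonically) isomorphic to $T^n$ rather than a twisted form of it — both of which hinge on the freeness of the $W$--action on $D_b$, i.e.\ on the defining ``unramified over $D$'' property of $\mathcal{B}^{sm}_{nr}$, and on the triviality of the $\mathcal{R}_w$ over $D_b$. The local triviality of \eqref{eq:morphism_Picard_stacks} over $\mathcal{B}^{sm}_{nr}$ in the stacky framework needs a little care but is otherwise routine.
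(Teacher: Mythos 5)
Your argument is correct and follows essentially the same route as the paper's (much terser) proof: the fibre of the forgetful map over $(b,\,P)$ is a torsor under the automorphisms of $P|_{D_b}$ compatible with the equivariance condition \eqref{eq:equivar}, i.e.\ under $J|_D$, which over the unramified locus is $T^n$. Where the paper simply cites \cite[\S~2.5]{Ngo} for the identification $J|_D\,\cong\, T^n$, you compute $\mathrm{Map}_W(D_b,\,T)\,\cong\, T^n$ directly from the freeness of the $W$--action on $D_b$ and the triviality of the $\mathcal{R}_w$ there; this is the same identification, carried out by hand.
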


\begin{proof}
Let $(E,\,\theta,\,\delta_i)\,\in\,\mathcal{P}_{F,b}(X)$, $i\,=\,1,\,2$. Then, the equivariance condition \eqref{eq:equivar} implies 
that $\delta_x$ commutes with all the automorphisms of $(E,\,\theta)$ inside $\mathbf{h}^{-1}(b)$. Hence one obtains a $J|_D$ torsor. 
But since by assumption $D_b\,\longrightarrow\, D$ is unramified, this is a $T^n$--torsor. See \cite[\S~2.5]{Ngo}.
\end{proof}

\begin{theorem}\label{thm:fibers}
The equivalence $\mathbf{h}^{-1}(b)\cong\mathcal{P}_b$ induces a faithful morphism
$$
\mathcal{P}_{F,b}\,\hookrightarrow\,\mathbf{h}_{FH}^{-1}(b)\, .
$$
\end{theorem}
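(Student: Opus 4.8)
The plan is to realize the desired inclusion by sending a relatively framed object $(P,\,\delta)\in\mathcal P_{F,b}$ to the underlying $J$--bundle $P\in\mathcal P_b$ (forgetting the equivariant framing over $D_b$), and then transport this across the equivalences $\mathbf h^{-1}(b)\cong\mathcal P_b$ and the framed analogue. Concretely, by the Donagi--Gaitsgory description \eqref{abel1}--\eqref{abel2}, a choice of base point in $\mathbf h^{-1}(b)$ identifies $\mathbf h^{-1}(b)$ with the Picard stack $\mathcal P_b$; an element of $\mathbf h^{-1}(b)$ is a $G$--Higgs bundle $(E,\,\theta)$ regular over all of $X$ whose spectral/cameral data is a $W$--equivariant $T$--bundle on $X_b$, and the framing data of $(P,\,\delta)$ over $D_b$ — which by \eqref{eq:equivar} is $W$-- and $T$--equivariant — descends, exactly as in the proof of Proposition \ref{prop phi is torsor}, to a framing of the bundle $E$ over $D$ (and, via the isomorphism $\operatorname{ad}(E)\cong$ the $J_b$--module picture on the regular locus, to a framing of $\theta$ as well). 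So the map is: take $(P,\,\delta)\in\mathcal P_{F,b}$, produce the corresponding Higgs bundle $(E,\,\theta)\in\mathbf h^{-1}(b)$ from $P$, and equip it with the framing over $D$ obtained by pushing $\delta$ forward along $X_b\to X$; this is by construction an element of $\mathbf h_{FH}^{-1}(b)$, giving the morphism $\mathcal P_{F,b}\to\mathbf h_{FH}^{-1}(b)$.

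The key steps, in order, are: (i) recall from \eqref{abel1}--\eqref{abel2} and \cite[Corollary 17.8]{DG}, \cite[Proposition 4.7.7]{Ngo} that over $\mathcal B^{sm}$ elements of $\mathbf h^{-1}(b)$ are everywhere--regular $G$--Higgs bundles with cameral data a $W$--equivariant principal $T$--bundle on $X_b$, and that the automorphism group scheme of such an object is $J_b$; (ii) observe that since $b\in\mathcal B_{nr}^{sm}$, the cover $D_b\to D$ is a trivial $W/W_x = W$--cover with $n$ sheets over each point of $D$ (unramified), so $J_b|_{D}\cong T^n$ by Proposition \ref{prop_relative Picard} and \cite[\S 2.5]{Ngo}; (iii) check that the $W$-- and $T$--equivariance condition \eqref{eq:equivar} is precisely what is needed for $\delta$ to descend to a well-defined framing of the Higgs bundle along $D$ — i.e. the framing is invariant under the deck group and compatible with the $T$--structure, hence glues to a reduction of $(E)_x$ over each $x\in D$; (iv) verify that the resulting triple lands in $\mathbf h_{FH}^{-1}(b)$, which is immediate from commutativity of \eqref{eq:stacky_framed_hm} since the underlying Higgs bundle is unchanged; (v) prove faithfulness of the morphism of stacks: a morphism between two objects of $\mathcal P_{F,b}$ is an isomorphism of the underlying $J$--bundles commuting with the framings, and such a morphism is determined by (indeed equals) its image in $\mathbf h_{FH}^{-1}(b)$ because the forgetful functor $\mathcal P_{F,b}\to\mathcal P_b$ is already faithful (an automorphism of $(E,\,\theta)$, i.e. a section of $J_b$, that fixes the framing is determined by its underlying automorphism), so no automorphisms are killed.

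The main obstacle I anticipate is step (iii): making precise the passage between the \emph{cameral} framing datum $\delta:P|_{D_b}\cong D_b\times T$ on the abelianized side and an honest framing $\phi:D\to\bigsqcup_x (E)_x$ (for $H_x=e$, a point of $(E)_x$ itself, equivalently a trivialization) on the Higgs--bundle side, including the verification that the $T$--torsor structures match up and that the diagonal $W$--action is accounted for correctly using the twist by $\mathcal R_w$ in the definition of the $W$--action on $T$--bundles over $X_b$. One must check that the framing produced over each $x\in D$ is well defined independently of the chosen point in the fiber $D_b|_x$ — this is exactly where equivariance \eqref{eq:equivar} is used, and where the hypothesis that $D_b\to D$ is unramified (so the fiber is a free $W$--set and the $\mathcal R_w$ twist is trivial over $D$) is essential. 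Once this dictionary is set up carefully, faithfulness in step (v) is formal, and the remaining points are routine consequences of Proposition \ref{prop_relative Picard}, Proposition \ref{prop phi is torsor}, and the abelianization results already recalled.
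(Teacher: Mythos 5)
Your proposal follows essentially the same route as the paper: transport a framed cameral datum $(P,\,\delta)$ across the abelianization equivalence, descend the $W$-- and $T$--equivariant framing $\delta$ along the unramified cover $D_b\,\to\, D$ to a framing of the underlying Higgs bundle over $D$, and then obtain the morphism of stacks by functoriality and faithfulness from compatibility with the torsor structures. The descent step you flag as the main obstacle is exactly where the paper invokes \cite[Proposition 7.5]{LP}, which produces a trivialization $E|_D\,\cong\, D\times N$ with $N$ the normalizer of $T$ in $G$.
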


\begin{proof}
Let $(E_G,\,\theta,\,\delta)\,\in\, \mathbf{h}_{FH}^{-1}(b)$, and let $P\,\in\,\mathcal{P}_b(X)$ be the object corresponding to
$(E_G,\,\theta)$ via the equivalence $\mathbf{h}_{FH}^{-1}(b)\,\cong\,\mathcal{P}_b(X)$. Since $X_b$ is not ramified over $D$, the
equivariance conditions on $P$ and $\delta$, together with \cite[Proposition 7.5]{LP} imply that $P|_{D_b}$ and $\delta$ descend
to $E|_D$ and a trivialization $E|_D\,\cong\, D\times N$, where $N$ is the normalizer of $T$ in $G$.
	
Since all the steps are functorial, this defines a morphism of stacks. Faithfulness follows from the fact that these are categories 
fibered in groupoids and that the action of $\mathcal{P}_{FH}$ on $\mathbf{h}^{-1}_{FH}(b)$ is compatible with the torsor structures 
over $\mathcal{P}$ and $\mathbf{h}^{-1}(b)$ respectively.
\end{proof}

We define the sub-stack of \emph{relatively framed Higgs bundles} as 
\begin{equation}\label{eq:diag_FH}
\mathbf{M}^{\Delta}_{FH}(G)\, :=\, \mathrm{Im}\left(\mathcal{P}_{F,b}\hookrightarrow	\mathbf{h}_{FH}^{-1}(b)\right).
\end{equation}
 Let $\mathcal{M}_{FH}^\Delta(G):=\mathbf{M}^{\Delta}_{FH}(G)\fatslash Z(G)$. Consider the restriction of the Hitchin map
 \begin{equation}\label{eq:diag_Hitchin}
h^\Delta_{FH}\,:\,\mathcal{M}_{FH}^\Delta(G)\,\longrightarrow\, \mathcal{B}_{nr}^{sm}\, .
 \end{equation}

\begin{corollary}\label{cor:subsystem}
The fibers of $h^{\Delta}_{FH}$ are $N$-dimensional semiabelian varieties. Therefore the moduli space $\mathcal{M}_{FH}^\Delta(G)$ is maximally abelianizable. Moreover, the $N$-functions $(h_1,\,\cdots,\,h_N)$ obtained by identifying
$\mathcal{B}\,\cong\,\mathbb{C}^N$ and $h_{FH}\,=\,(h_1,\,\cdots,\, h_N)$ are in involution.
\end{corollary}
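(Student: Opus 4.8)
The plan is to read off all three assertions from an explicit description of the generic fibres of $h^{\Delta}_{FH}$, obtained by combining Theorem \ref{thm:fibers} with Proposition \ref{prop_relative Picard}. Fix $b\,\in\,\mathcal{B}^{sm}_{nr}$. By the faithful embedding $\mathcal{P}_{F,b}\,\hookrightarrow\,\mathbf{h}_{FH}^{-1}(b)$ of Theorem \ref{thm:fibers} and the definition \eqref{eq:diag_FH}, the fibre $(h^{\Delta}_{FH})^{-1}(b)$ is the $Z(G)$--rigidification of $\mathcal{P}_{F,b}$, which by Proposition \ref{prop_relative Picard} is a $T^n$--torsor over the group stack $\mathcal{P}_b\,\cong\,\mathbf{h}^{-1}(b)$ (see \eqref{abel2}). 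The generic automorphism group $Z(G)$ of $\mathcal{P}_b$ acts on these $T^n$--fibres through the free (translation) embedding $Z(G)\,\hookrightarrow\, T\,\hookrightarrow\, T^n$, so after rigidifying one is left with a $T^n/Z(G)$--torsor
$$(h^{\Delta}_{FH})^{-1}(b)\,\longrightarrow\, J_b\,:=\,h^{-1}(b)\, ,$$
i.e.\ exactly the $T^n/Z(G)$--subtorsor of the $G^n/Z(G)$--torsor $h_{FH}^{-1}(b)\,\to\, h^{-1}(b)$ of Corollary \ref{cor:torsor-fibres}. Carrying this out carefully — in particular verifying that the $W$-- and $T$--equivariance of the framing in Proposition \ref{prop_relative Picard} is what lets $J|_{D_b}$ split off the torus $T^n$ over the unramified locus $\mathcal{B}^{sm}_{nr}$ — is the first, and somewhat technical, step.

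Granting this description, the three assertions follow quickly. The base $J_b$ is an abelian variety by the abelianization isomorphisms \eqref{abel1}--\eqref{abel2}, $T^n/Z(G)$ is an algebraic torus, and — since $\mathcal{P}_{F,b}$ is an abelian group stack relative to $\mathcal{B}^{sm}_{nr}$ — its rigidification is an abelian algebraic group sitting in an exact sequence $1\,\longrightarrow\, T^n/Z(G)\,\longrightarrow\, (h^{\Delta}_{FH})^{-1}(b)\,\longrightarrow\, J_b\,\longrightarrow\, 0$, hence a semiabelian variety. Its dimension is $(n\dim T-\dim Z(G))+\big((g(X)-1)\dim G+n(\dim B-\dim T)+\dim Z(G)\big)\,=\,(g(X)-1)\dim G+n\dim B\,=\,N$ by Lemma \ref{lm:dim_fibers} and \eqref{dimH}; this proves the first assertion. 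For maximal abelianizability, note that $\mathcal{M}^{\Delta}_{FH}(G)$ fibres over the open subset $\mathcal{B}^{sm}_{nr}\,\subset\,\mathcal{B}$ with $N$--dimensional fibres, so $\dim\mathcal{M}^{\Delta}_{FH}(G)\,=\,2N$; thus the $N$ components of $h^{\Delta}_{FH}$ are exactly half the dimension, their generic common fibres are $N$--dimensional semiabelian varieties, and the Hamiltonian fields of the $h_i$ restrict to the $J_b$--translation flows on these fibres — which is the meaning of a maximally abelianizable system (cf.\ Remark \ref{rk:max_abelianizable_system}). Finally, the $h_i$ Poisson commute on $\mathcal{M}_{FH}(G)$ with respect to $\{\cdot,\,\cdot\}_{\Phi}$ by Corollary \ref{cor:HF-Poissonc}; reading the involution on the subsystem itself requires the additional, \textsc{BLP}--style check that $\mathcal{M}^{\Delta}_{FH}(G)$ is a symplectic submanifold of $(\mathcal{M}_{FH}(G),\,\Phi)$ to which the fields $X_{h_i}$ are tangent, after which $h_i|_{\mathcal{M}^{\Delta}_{FH}(G)}$ remain in involution and the fibres of $h^{\Delta}_{FH}$ are Lagrangian.

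The hard part will be the first step: promoting the stacky torsor statements of Theorem \ref{thm:fibers} and Proposition \ref{prop_relative Picard} to the clean $T^n/Z(G)$--torsor structure on the coarse fibre, which means tracking precisely how the $Z(G)$--gerbe structure on $\mathcal{P}_b$, the relative framing, and the $W$--action on the cameral cover fit together; this is also where the hypothesis that the cameral cover be unramified over $D$ is used. A secondary obstacle — relevant only if one insists on stating the involution intrinsically on $\mathcal{M}^{\Delta}_{FH}(G)$ rather than inheriting it from $\mathcal{M}_{FH}(G)$ — is showing that $\mathcal{M}^{\Delta}_{FH}(G)$ is symplectic and $X_{h_i}$--invariant, for which framing the Higgs field (and not merely the bundle) is what does the work, exactly as in the corresponding argument of \cite{BLP}.
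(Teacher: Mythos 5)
Your proposal is correct and follows essentially the same route as the paper: both extract the short exact sequence $0\longrightarrow T^n/Z(G)\longrightarrow (h^{\Delta}_{FH})^{-1}(b)\longrightarrow h^{-1}(b)\longrightarrow 0$ from Theorem \ref{thm:fibers} and Proposition \ref{prop_relative Picard}, conclude semiabelianity of the fibers (the paper citing \cite[Proposition 7.2.1]{BSU} where you argue it directly), perform the same dimension count via Lemma \ref{lm:dim_fibers} and \eqref{dimH}, and reduce the involution claim to Corollary \ref{cor:HF-Poissonc} together with a \cite{BLP}-style verification. The extra care you flag about passing from the stacky torsor statements to the coarse fibers is exactly what the paper's commutative diagram and rigidification handle.
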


\begin{proof}
We have a commutative diagram
$$
\xymatrix{
\mathbf{h}^{-1}_{FH}(b)\ar[r]\ar[d]&
{h}^{-1}_{FH}(b)\ar[d]\\
\mathbf{h}^{-1}(b)\ar[r]&{h}^{-1}_{H}(b),
}
$$
which by Theorem \ref{thm:fibers} implies that there is a short exact sequence
$$
0\longrightarrow T^n/Z(G)\longrightarrow(h^{\Delta})^{-1}_{FH}(b)\longrightarrow h_{H}(b)\longrightarrow 0.
$$	
By \cite[Proposition 7.2.1]{BSU} these are semiabelian varieties. The dimensional count follows from Lemma \ref{lm:dim_fibers} and the above exact sequence.

Poisson commutativity and linearity of the vectors $X_{h_i}$, $i\,=\,1,\,\cdots,\, N$ follows as in \cite[Proposition 5.12]{BLP}. 

The Hitchin system \eqref{eq:diag_Hitchin} is a maximally abelianizable subsystem as the dimension of the fibers justifies. 
\end{proof}

\begin{remark}\label{rk:relatively_framed}
Given a framed cameral datum, the corresponding Higgs bundle is naturally endowed with a 
framing of the principal bundle and of the Higgs field.
\end{remark}

\begin{remark}\label{rk:max_abelianizable_system}
For general groups $H_x$ one may produce the following maximally abelianizable subsystem. Given $x\in D$,
let $T\subset G$ be a maximal torus, and let $T_{x}\,:=\,T\cap H_x$. Then, one may
consider the stack of cameral data together with a framing, that is, a $T$-equivariant morphism $P|_D\,\longrightarrow\,
\prod_{x\in D_b}T/T_{x}$ which is $T$-equivariant and $W$-equivariant, in the same sense as \eqref{eq:equivar}. The same reasoning as done for $H_x=e$ produces a $\prod_{x\in
D}T/T_{x}$-torsor $(\mathbf{h}_{FH}^{\Delta})^{-1}(b)\,\subset\, \mathbf{h}^{-1}(b)$, that we call
\emph{the stack of framed cameral data} (over $X_b$). On the level of the moduli space, one obtains a
torsor for the group 
$$\left(\prod_{x\in D}T/T_{x}\right)/\left(Z(G)/Z_{H_x}(G)\right)$$ which is
maximal (of dimension $N-\sum_{x\in D}\dim T_{x}+\dim Z_{H_x}(G)$). The fibers are thus semiabelian
varieties of the same dimension as $\mathcal{B}$ if and only if $\dim T_{x}=\dim Z_{H_x}(G)$.
\end{remark}

\section*{Acknowledgements}

Remarks \ref{remP}, \ref{remH} and \ref{remP2} are due to the referee. We are very
grateful to the referee for these and other helpful comments.
The first-named author thanks
Centre de Recherches Math\'ematiques, Montreal, for hospitality. He is partially supported
by a J. C. Bose Fellowship.

%%%%%%%%%%%%%%%%%%%%%%%%%%%%%%%%%%%%%%%%%%%%%%%%%%%%%%%%%%%%%


\begin{thebibliography}{ZZZZZ}

\bibitem[AOV]{AOV} D. Abramovich, M. Olsson and A. Vistoli, Tame stacks in positive 
characteristic, \textit{Annales de l'Institut Fourier}, \textbf{58} (2008), 1057--1091.

\bibitem[AnBi]{AB} B. Anchouche and I. Biswas, Einstein--Hermitian connections on
polystable principal bundles over a compact K\"ahler manifold, {\it Amer. Jour.
Math.} \textbf{123} (2001), 207--228.

\bibitem[AtBo]{AtBo} M. F. Atiyah and R. Bott, The Yang-Mills equations over Riemann surfaces,
\textit{Philos. Trans. Roy. Soc. London Ser. A} {\bf 308} (1983), 523--615.

\bibitem[BLS]{BLS} A. Beauville, Y. Laszlo and C. Sorger, The Picard group of the
moduli of G-bundles on a curve, {\it Compositio Math.} {\bf 112} (1998), 183--216.

\bibitem[BG]{BG} I. Biswas and T. L. G\'omez, Connections and Higgs
fields on a principal bundle, {\it Ann. Glob.
Anal. Geom.} {\bf 33} (2008), 19--46.

\bibitem[BR]{BR} I. Biswas and S. Ramanan, An infinitesimal study of the moduli of
Hitchin pairs, {\it Jour. London Math. Soc.} {\bf 49} (1994), 219--231.

\bibitem[Bi]{Bi} I. Biswas, A remark on a deformation theory of Green and
Lazarsfeld, {\it Jour. Reine Angew. Math.} {\bf 449} (1994), 103--124.

\bibitem[BLP]{BLP} I. Biswas, M. Logares and A. {Pe\'on-Nieto}, 
Symplectic geometry of a moduli space of framed Higgs bundles, {\it Int. Math.
Res. Not.}, doi:10.1093/imrn/rnz016, arXiv:1805.07265.

\bibitem[Bor]{Bor} A. Borel, {\it Linear Algebraic Groups}, Second Edition,
Graduate Texts in Mathematics, 126. Springer-Verlag, New York, 1991.

\bibitem[Bot]{Bot} F. Bottacin, Symplectic geometry on moduli spaces of stable pairs,
{\it Ann. Sci. \'Ecole Norm. Sup.} {\bf 28} (1995), 391--433. 

\bibitem[BSU]{BSU} M. Brion, P. Samuel and V. Uma, {\it Lectures on the structure of algebraic 
groups and geometric applications}, CMI Lecture Series in Mathematics, 1, Hindustan Book 
Agency, New Delhi; Chennai Mathematical Institute (CMI), Chennai, 2013.

\bibitem[Do]{Do} I. F. Donin, Construction of a versal family of deformations for
holomorphic bundles over a compact complex space, {\it Math. USSR Sb.} {\bf 23}
(1974), 405--416.

\bibitem[DG]{DG} R.Y. Donagi and D. Gaitsgory, The gerbe of {Higgs} bundles,
\textit{Transform. Groups} {\bf 7} (2001), {109--153}. 

\bibitem[DM]{DMa} R.Y. Donagi and E. Markman, Spectral covers, algebraically completely integrable,
Hamiltonian systems, and moduli of bundles, \textit{Integrable systems and quantum groups}
(Montecatini Terme, 1993), 1--119, Lecture Notes in Math., 1620, Fond. CIME/CIME Found. Subser., Springer, Berlin, 1996.

\bibitem[Hi1]{Hi0} N. J. Hitchin, The self-duality equations on a Riemann surface,
{\it Proc. London Math. Soc.} {\bf 55} (1987), 59--126.

\bibitem[Hi2]{H} N. J. Hitchin, Stable bundles and integrable systems, {\it
Duke Math. Jour.} {\bf 54} (1987), 91--114.

\bibitem[Hi3]{H2} N. J. Hitchin, Lie groups and Teichm\"{u}ller space, {\it Topology} {\bf 31} (1992), 449--473.

\bibitem[Hum]{Hu} J. E. Humphreys, {\it Linear Algebraic Groups}, Graduate Texts in Mathematics,
No. 21, Springer-Verlag, New York-Heidelberg, 1975.

\bibitem[Huy]{Huy} D. Huybrechts, {\it Fourier-Mukai transforms in algebraic geometry},
Oxford Mathematical Monographs, The Clarendon Press, Oxford University Press, Oxford, 2006.

\bibitem[LP]{LP} H. Lange and C. Pauly, Polarizations of Prym varieties for Weyl groups
via abelianization, {\it Jour. Eur. Math. Soc.} {\bf 11} (2009), 315--349

\bibitem[LM]{LM} M. Logares and J. Martens, Moduli of parabolic Higgs bundles and Atiyah
algebroids, {\it Jour. Reine Angew. Math.} {\bf 649} (2010), 89--116.

\bibitem[Ma]{Ma} E. Markman, Spectral curves and integrable systems,
{\it Compositio Math.} {\bf 93} (1994), 255--290. 

\bibitem[Ngo]{Ngo} B. C. Ng{\^o}, Le lemme fondamental pour les alg\`ebres de {L}ie, 
\textit{Publ. Math. Inst. Hautes \'Etudes Sci.} {\bf 111} (2010), {1--169}.

\bibitem[Ni]{Nitin}
N. Nitsure,
Moduli space of semistable pairs on a curve, {\it Proc. London Math. Soc.}
{\bf 62} (1991), 275--300.

\bibitem[Ra1]{Ra} A. Ramanathan, Stable principal bundles on a compact Riemann
surface, {\it Math. Ann.} {\bf 213} (1975), 129--152.

\bibitem[Ra2]{Ra2} A. Ramanathan, Moduli of principal bundles over algebraic curves, {\it Proc. Indian Acad. Sci. 
Math. Sci.} 106 (1996), 301--328.

\bibitem[RS]{RS} A. Ramanathan and Subramanian, Einstein-Hermitian
connections on principal bundles and stability, {\it Jour. Reine Angew. Math.} {\bf 390}
(1988), 21--31.

\bibitem[Se]{Se} C. S. Seshadri, \textit{Fibr\'es vectoriels sur les courbes alg\'ebriques},
Notes written by J.-M. Dr\'eezet, Ast\'erisque, 96, Soci\'et\'e Math\'ematique de France,
Paris, 1982.

\bibitem[Si1]{Si0} C. T. Simpson, Constructing variations of Hodge structure using Yang-Mills 
theory and applications to uniformization, {\it Jour. Amer. Math. Soc.}
{\bf 1} (1988), 867--918.

\bibitem[Si2]{Si1} C. T. Simpson, Moduli of representations of the fundamental group
of a smooth projective variety I, \emph{Inst. Hautes \'Etudes Sci. Publ. Math.} {\bf
79} (1994), 47--129.

\bibitem[Si3]{Si2} C. T. Simpson, Moduli of representations of the fundamental group
of a smooth projective variety II, \emph{Inst. Hautes \'Etudes Sci. Publ. Math.} {\bf
80} (1995), 5--79.

\bibitem[St]{steenrod} N. Steenrod, \textit{The topology of fibre bundles},
Princeton Mathematical Series, vol. 14. Princeton University Press, Princeton,
N. J., 1951.

\end{thebibliography}
\end{document}